\newtheorem{theorem}{Theorem}[section]
\newtheorem{proposition}[theorem]{Proposition}  
\newtheorem{lemma}[theorem]{Lemma}
\newtheorem{corollary}[theorem]{Corollary}
\newtheorem{definition}[theorem]{Definition}
\newtheorem{remark}[theorem]{Remark}
\def\phi{\varphi}
\def\>{\rangle}
\def\<{\langle}
\def\R{\mathbb{R}}
\def\N{\mathbb{N}}
\def\Ric{\mathop{\rm Ric}\nolimits}
\def\Diam{\mathop{\rm \delta}\nolimits}
\def\Vol{\mathop{\rm Vol}\nolimits}
\def\exp{\mathop{\rm exp}\nolimits}
\def\inj{\mathop{\sl i_M}\nolimits}
\def\dis{\displaystyle}
\def\dsl{\displaylines}
\def\pl{\partial}
\begin{document}
\title[Discretization of a compact manifold and spectrum]{Approximation of the spectrum of a manifold by discretization}
\author{Erwann AUBRY}
\address{Laboratoire Dieudonn\'e\\
Universit\'{e} de Nice Sophia-Antipolis\\
Parc Valrose\\
06108 Nice Cedex\\ 
FRANCE}
\email{eaubry@math.unice.fr}
\date{}
\begin{abstract} We approximate the spectral data (eigenvalues and eigenfunctions) of compact Riemannian manifold by the spectral data of a sequence of (computable) discrete Laplace operators associated to some graphs immersed in the manifold. We give an upper bound on the error that depends on upper bounds on the diameter and the sectional curvature and on a lower bound on the injectivity radius.\end{abstract}
\keywords{Spectral theory, graphs, finite elements, Riemannian geometry, finite elements, discretization}
\maketitle

\section{Introduction}

We prove that the spectral data (eigenvalues and eigenfunctions) of any closed Riemannian manifold can be approximated by the corresponding spectral data of the Laplace operator of some graphs geodesically immersed in the manifold. It is an extension of the finite elements method to the Riemannian setting. The two main points of our method are the following.
\begin{enumerate}
\item The error made on the spectral data are bounded above by universal functions of some bounds on the geometry of the manifold (i.e. bounds on the injectivity radius, the sectional curvature and the diameter) and of the graph (i.e. bounds on the thinness and mesh of the graph). This errors tend to $0$ as the mesh of the graph tends to $0$.
\item The Laplace operator of a metric graph is a universal and explicitly computable function of its edge-lengths.
\end{enumerate}
Before stating our main results, we need a few definitions and notations. 

\subsection{Definitions and notations}
We will work with a special kind of immersed graphs, that we will call {\sl geodesic triangulations} (see the definition in Section \ref{geodtriang}). They are not necessarily actual triangulations of $M$ (for instance the simplices of dimension greater than $1$ are not necessarily realized as subset of $M$) but are more easier to construct.

A geodesic triangulation $T$ of an $n$-dimensional Riemannian manifold $(M^n,g)$ is a set of points $(x_i)_{1\leqslant i\leqslant N}$ of $M$ endowed with a structure of abstract simplicial complex $K$ which satisfies the two properties \ref{ax1} and \ref{ax2} of section \ref{geodtriang}.
We denote by $S_p$ the set of closed $p$-simplices of $K$. We identify the edges of $K$ with some minimizing, geodesic segment between their vertexes.
For any closed simplex $\sigma$ of $K$, we set $St(\sigma)$ (resp. $St_p(\sigma)$)  the set of the closed simplices (resp. of dimension $p$) of $K$ that contains $\sigma$. 
The vertices of any $\sigma\in S_p$ are naturally ordered by their indices ($\sigma=\{x_{i_\sigma(0)},\ldots,x_{i_\sigma(p)}\}$ with $i_\sigma(0)<\ldots<i_\sigma(p)$).
We set $X_\sigma=x_{i_{\sigma(0)}}$ and for any distinct $0\leq k\leq n$, we set $v^\sigma_{k}$ a vector of $T_{X_\sigma}M$ such that $x_{i_\sigma(k)} =\exp_{X_\sigma}(v^\sigma_{k})$.
We set also $A^\sigma$ the associated Gramm matrix $\Bigl(g(v_{k}^\sigma,v_{l}^\sigma)\Bigr)_{\tiny  \begin{matrix}
    1\leq l\leq n\\ 1\leq k\leq n
  \end{matrix}}$.
Given a geodesic triangulation of $(M^n,g)$, we note $m_T$ its {\sl mesh} (the maximal length of its edges) and $\Theta_T$ its {\sl thinness,} i.e the quantity
$$\Theta_T=\max\Bigl(\dis\max_{\tiny\begin{matrix}
    \sigma\in S_n\\0\leq k\leq n
  \end{matrix}}m_T(\det A^\sigma)^{-\frac{1}{2n}},\max_{(e_1,e_2)\in S_1}\frac{length(e_1)}{length(e_2)}\Bigr).$$
Eventually, on the set $\R^N$ of functions $y:T\to\R$, (where  $N$ is the number of vertices of $T$ and we identify $T$ with $S_0$), we define two quadratic forms by the formulae
\begin{equation}
  \label{L2disc}
  |y|_T^2=\frac{2}{(n+2)!}\sum_{1\leq i\leq j\leq N} y_iy_{j}\sum_{\sigma\in St_n([x_i,x_{j}])}\sqrt{\det A^\sigma},
\end{equation}
\begin{equation}
  \label{Ddisc}
  q_T(y)=\frac{1}{n!}\sum_{\sigma\in S_n}\sqrt{\det A^\sigma}\sum_{k,l=1}^n(A^\sigma)^{kl}(y_{i_\sigma(k)}{-}y_{i_\sigma(0)})(y_{i_\sigma(l)}{-}y_{i_\sigma(0)}).
\end{equation}
Note that if $K$ is a sub-complex of $\R^n$ then $|y|_T$ and $q_T(y)$ give respectively the $L^2$-norm and Dirichlet energy of the affine-by-parts expansion of $y$.

\subsection{Main results}
For any closed, Riemannian $n$-manifold $(M,g)$, we denote by $\delta_M$ its diameter, by $R$ an upper bound of all its sectional curvatures and by $\inj$ its injectivity radius. We denote also by $0=\lambda_0(T)\leq\cdots\leq\lambda_{N-1}(T)$ the eigenvalues of $q_T$ with respect to $|\cdot|^2_T$ and $0=\lambda_0(M)<\lambda_1(M)\leq\cdots\leq\lambda_i(M)\leq\cdots$ the eigenvalues of $(M^n,g)$.

\begin{theorem}\label{approvalp}
Let $n\geq2$ be an integer, and $\epsilon\in]0,1[$ be a real number. There exists a constant $C(n)$ such that if 
\begin{enumerate}
\item[i)] $(M^n,g)$ is a closed, Riemannian $n$-manifold which satisfies $\Diam_M^2|R|\leq\Lambda^2$, 
\item[ii)] $T$ is a geodesic triangulation  of $M$ which satisfies  $\frac{m_T}{\Diam_M}\leq C(n)\bigl(\frac{\inj}{\Diam_M\Theta_Te^{e^\Lambda}p}\bigr)^{3n^3}\epsilon$,
\end{enumerate}
then we have
$$\displaylines{(1-\epsilon)\lambda_p(T)\leq\lambda_p(M)\leq(1+\epsilon)\lambda_p(T).}$$
\end{theorem}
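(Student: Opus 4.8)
The proof will rest on the Courant--Fischer min-max characterization of eigenvalues together with two explicit linear maps relating the space $\R^N$ of discrete functions to the Sobolev space $H^1(M)$: a \emph{reconstruction} map $E\colon\R^N\to H^1(M)$ generalizing the piecewise-affine interpolant of the classical finite element method, and a \emph{sampling} map $Pu=(u(x_i))_{1\le i\le N}$ defined on smooth functions. The core of the argument is to produce a small number $\eta$ --- a function of $n$ and of the ratio appearing in hypothesis (ii), tending to $0$ as $m_T/\Diam_M\to 0$ --- such that, for every $y\in\R^N$,
$$(1-\eta)|y|_T^2\le\|Ey\|_{L^2(M)}^2\le(1+\eta)|y|_T^2,\qquad (1-\eta)\,q_T(y)\le\int_M|\nabla Ey|^2\le(1+\eta)\,q_T(y),$$
and such that, for every eigenfunction $u$ of $M$ with eigenvalue $\lambda\le\lambda_p(M)$,
$$(1-\eta)\|u\|_{L^2(M)}^2\le|Pu|_T^2,\qquad q_T(Pu)\le(1+\eta)\int_M|\nabla u|^2.$$
Granting these, the theorem follows from the variational principle once $\eta<1$: since $E$ is then injective on the span $V_T$ of the first $p+1$ eigenvectors of $q_T$, using $E(V_T)$ as a $(p{+}1)$-dimensional test space for $\lambda_p(M)$ gives $\lambda_p(M)\le\frac{1+\eta}{1-\eta}\,\lambda_p(T)$; since $P$ is injective on the span $V_M$ of the first $p+1$ eigenfunctions of $M$, using $P(V_M)$ as a test space for $\lambda_p(T)$ gives $\lambda_p(T)\le\frac{1+\eta}{1-\eta}\,\lambda_p(M)$. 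One then chooses the threshold $C(n)$ in (ii) so that $\frac{1+\eta}{1-\eta}\le 1+\epsilon$.

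To build $E$, observe that the ordering of the vertices attaches to every closed $n$-simplex $\sigma$ of $K$ a canonical parametrization $f_\sigma\colon\Delta^n\to M$ obtained by iterated geodesic coning from the lowest-indexed vertex $X_\sigma$ (cone $x_{i_\sigma(0)}$ to $x_{i_\sigma(1)}$, then cone the resulting geodesic segment to $x_{i_\sigma(2)}$, and so on); two simplices sharing a face are coned identically on that face, so declaring $Ey\circ f_\sigma$ to be affine on $\Delta^n$ with the prescribed vertex values yields a globally defined continuous, piecewise-smooth --- hence $H^1$ --- function on $M$. By the remark following \eqref{Ddisc}, if each simplex were flat with Gram matrix $A^\sigma$ then $\|Ey\|_{L^2}^2$ and $\int_M|\nabla Ey|^2$ would be \emph{exactly} $|y|_T^2$ and $q_T(y)$. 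Thus the two inequalities for $E$ reduce to comparing, simplex by simplex, the pulled-back metric $f_\sigma^* g$ on $\Delta^n$ with the flat model having the same Gram matrix $A^\sigma$: a Jacobi-field/Rauch comparison shows that $f_\sigma^*g$, its inverse, and its first derivatives differ from the flat model by factors that are close to $1$ once $m_T$ is small, with a rate controlled by the curvature bound $\Diam_M^2|R|\le\Lambda^2$ (the second derivatives of $g$ in normal coordinates, needed for the Dirichlet form, require differentiating the Jacobi equation, which produces the nested exponential $e^{e^\Lambda}$), by $\inj$ (so that the $f_\sigma$ are immersions with uniformly invertible differential at the scale $m_T$), and by $\Theta_T$, which keeps $\det A^\sigma$ from being small relative to $m_T^{2n}$ and keeps all edge lengths comparable. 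Summing the simplex-wise estimates over $S_n$ gives the two inequalities for $E$.

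For $P$, one first records the standard interior elliptic estimates for an eigenfunction $u$: under $\Diam_M^2|R|\le\Lambda^2$ and the lower bound on $\inj$, the $C^1$ and $C^2$ norms of $u$ are bounded by $\|u\|_{L^2(M)}$ times explicit powers of $\lambda$ and of the geometric data; moreover $\lambda\le\lambda_p(M)$ is itself bounded in terms of the geometry and $p$ (Cheng-type estimate), which is why $p$ enters hypothesis (ii). On each closed $n$-simplex, $E(Pu)$ is precisely the affine-in-$f_\sigma$-coordinates interpolant of $u$ at the vertices, so the classical finite-element interpolation estimate, transported to $M$ through the charts $f_\sigma$ together with the metric comparison of the previous paragraph, bounds $\|u-E(Pu)\|_{L^2(\sigma)}$ and the local Dirichlet-energy defect by $m_T^2$, respectively $m_T$, times $\|u\|_{C^2}$ and a factor involving the non-degeneracy of $\sigma$ (again the thinness $\Theta_T$ is essential). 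Combining these with $\bigl|\|E(Pu)\|_{L^2}^2-|Pu|_T^2\bigr|\le\eta\,|Pu|_T^2$, with $\|u\|_{L^2(M)}^2=\sum_{\sigma\in S_n}\|u\|_{L^2(\sigma)}^2$, and absorbing the powers of $\lambda\le\lambda_p(M)$ into $\eta$ once $m_T/\Diam_M$ is below the threshold, yields the last two inequalities.

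The heart of the work, and the main obstacle, is the quantitative geometry of the two middle paragraphs: turning the single smallness hypothesis on $m_T/\Diam_M$ into genuinely small \emph{multiplicative} errors on both the $L^2$ and the Dirichlet forms, uniformly in $y$, from nothing but bounds on $|R|$, $\Diam_M$, $\inj$ and on the shape of $T$. One must control not merely $g$ but its first two derivatives in the $f_\sigma$-charts, which forces differentiating the Jacobi equation twice and accounts for the iterated exponential $e^{e^\Lambda}$; the degeneracy of the flat simplices enters with high powers of $(\det A^\sigma)^{-1}$, and propagating $\Theta_T$, $\inj/\Diam_M$, $p$ and $n$ through the whole chain of estimates is what produces the exponent $3n^3$ in (ii); and one must verify that the simplex-wise parametrizations really do glue to a globally $H^1$ map, with no uncontrolled contribution surviving on the $(n{-}1)$-skeleton. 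By comparison, the min-max bookkeeping and the a priori estimates for the eigenfunctions of $M$ are routine.
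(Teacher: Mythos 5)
Your outline captures the overall strategy — min-max, a sampling map, an extension map, interpolation error controlled by $C^2$ a priori bounds on eigenfunctions — and this is indeed what the paper does. But there is a genuine gap in the treatment of the extension map $E$, and it propagates to the logical structure of the eigenvalue comparison.

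You assert that one can build a globally continuous, piecewise-smooth $E:\R^N\to H^1(M)$ by iterated geodesic coning in each simplex, and then obtain, \emph{for every} $y\in\R^N$, the pure multiplicative estimate $(1-\eta)|y|_T^2\le\|Ey\|_{L^2}^2\le(1+\eta)|y|_T^2$. This does not work in the generality of the theorem. A geodesic triangulation in the sense of the paper is only an \emph{abstract} complex satisfying the two axioms of Section \ref{geodtriang}; the realized simplices $T_\sigma=B^\sigma_0(\Delta^n)$ need not tile $M$ — they can overlap or leave gaps of relative size $\eta\sim\Theta^{2n}(\Lambda m_T/\Diam_M)^2$ (Corollary \ref{discret}(2) only gives that $(1-\eta)\cdot T_\sigma$ are disjoint and $(1+\eta)\cdot T_\sigma$ cover). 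Moreover the coning parametrizations from different base vertices of a shared face do not coincide; there is no canonical chart. Hence a piecewise-affine recipe does not define a function on $M$ at all. The paper's Whitney map $W$ glues the local affine interpolants $L^{f_T}_\sigma$ with a partition of unity $(\phi_\sigma)$ subordinate to a decomposition $M=\sqcup_\sigma S_\sigma$ into thickened faces (Proposition \ref{Ssigma}, Lemma \ref{partunit}). The price of this gluing is visible in Proposition \ref{EstimW}(2): the $L^2$ error of $W$ is \emph{not} a multiple of $|f_T|_T^2$ alone but of $|f_T|^2_T+\Diam_M^2\,q_T(f_T)$, because the blending layers $S_\tau$ ($\tau$ of dimension $<n$) contribute terms controlled only by the oscillation of $f_T$, i.e. by $q_T(f_T)$. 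The Dirichlet comparison (Proposition \ref{EstimW}(3)) is multiplicative in $q_T$, but the $L^2$ comparison genuinely is not. Your claimed estimate would be false for, say, a vector $y$ concentrated at one vertex of a fine triangulation.

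This forces an asymmetry that your proposal elides. Since the $L^2$ error of $W$ involves $\Diam_M^2 q_T(y)$, restricting $W$ to the span of the first $p+1$ eigenvectors of $q_T$ only gives a small multiplicative error if one already knows $\Diam_M^2\lambda_p(T)$ is bounded. The paper therefore proves the two inequalities in a fixed order: first $\lambda_p(T)\le(1+\epsilon)\lambda_p(M)$ via the restriction map $R$ and the Moser a priori estimates of Proposition \ref{estimapprox}, combined with the Cheng-type bound of Lemma \ref{mLi} to control $\lambda_p(M)$ (and hence $\lambda_p(T)$) by $p,\Lambda,\Diam_M/\inj$; only then is the Whitney map invoked to get $\lambda_p(M)\le(1+\epsilon)\lambda_p(T)$ (Theorem \ref{MaxLi}), plugging in the already-established bound on $\lambda_p(T)$ to absorb the $q_T$ term. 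Treating the two directions symmetrically, as you do, is not available here.

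A smaller point: you trace the iterated exponential $e^{e^\Lambda}$ to differentiating the Jacobi equation twice to control second derivatives of $g$ in normal coordinates. The paper gets by with the first-order Buser--Karcher estimates (Theorems \ref{BuserKarcher}, \ref{Lipexp}); the Hessian control on eigenfunctions comes from Moser iteration with the Bochner--Weitzenb\"ock identity (Proposition \ref{estimapprox}), producing factors $e^{n^3\Lambda}(1+\Diam_M^2\lambda_p)^{O(n^2)}$. The nested exponential then enters when $\Diam_M^2\lambda_p$ is replaced by the Cheng bound $C(n)(\Diam_M/\inj)^2 e^{ne^\Lambda/2}p^2$ of Lemma \ref{mLi}. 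So it originates in the eigenvalue growth estimate, not in higher-order curvature bounds on the exponential map.
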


\begin{remark}
  The constant $C(n)$ is computable. The second condition says that any finite number of eigenvalues can be approximated provided the mesh of the graph is small enough and the thinness is controlled.
\end{remark}

\begin{remark} The matrices $A^\sigma$ depend on the angle between some edges of $T$ issued from a same vertex, but the same result is valid if we replace the coefficient $g(v_{k}^\sigma,v_{l}^{\sigma})$ by $$\frac{1}{2}\Bigl[d^2(X_\sigma,x_{i_\sigma(k)})+d^2(X_\sigma,x_{i_\sigma(l)})-d^2(x_{i_\sigma(k)},x_{i_{\sigma(l)}})\Bigr]$$
in the definition of the matrix $A^\sigma$.
This gives approximation of the eigenvalues of $M$ by the eigenvalues of a discrete Laplace operator whose coefficients are universal functions of the lengths of a geodesically immersed graph of $M$.

Note that in \cite{BIK}, the authors get the same result for another geometric quadratic form $q_T$, whose coefficients depend on the volume of the Vorono\"i cells associated to a lattice $(x_i)_{i\in I}$ which need not to be the vertices of a geodesic triangulation. 
\end{remark}

We denote by $(f_i^T)$ the eigenvectors of $q_T$ with respect to $|\cdot|^2_T$ and let $(f_i)_{i\in \N}$ be a $L^2$ orthonormal family of eigenfunctions of $(M^n,g)$ such that $\Delta f_i=\lambda_i f_i$ for all $i\in\N$. For some integers $p<q$, we set $E_{p,q}$ (resp. $F_{p,q}$) the sum of the eigenspaces of $\Delta(M)$ (resp. $q_T$) associated to the eigenvalues $(\lambda_i(M))_{p+1\leqslant i\leqslant q}$ (resp. $(\lambda_i(T))_{p+1\leqslant i\leqslant q}$ and $P_{p,q}$ (resp. $Q_{p,q}$) the normal projection on $E_{p,q}$ (resp. $F_{p,q}$).

\begin{theorem}\label{approvecp}
Under the assumptions of Theorem \ref{approvalp}, if there exist some integers $p<q$ and $\eta>0$ such that
$\lambda_p+\eta\leqslant\lambda_{p+1}$ and  $\lambda_q+\eta\leqslant\lambda_{q+1}$,  then for any $f\in E_p$, we have
$\|R(f)-P_{p,q}\circ R(f)\|_T^2\leqslant \frac{C(q,n,\Lambda,\frac{\Diam_M}{i_0})}{\sqrt{\eta}}(\frac{m_T}{\Diam_M})^\frac{1}{6n^2}\|R(f)\|_T^2$,
and for any $(y_i)\in F_p$, we have
$\|W(y_i)-Q_{p,q}\circ W(y_i)\|_T^2\leqslant C(q,n,\Lambda,\frac{\Diam_M}{i_0},\eta)(\frac{m_T}{\Diam_M})^\frac{1}{6n^2}\|W(y_i)\|_T^2$.
\end{theorem}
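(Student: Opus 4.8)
The statement asserts that the identification maps $R$ (sampling at the vertices) and $W$ (the finite-element extension attached to the geodesic simplices) approximately intertwine the spectral projectors of $\Delta$ and of $q_T$: the first inequality says $R$ sends the eigenspace-band $E_{p,q}$ into a neighbourhood of $F_{p,q}$, the second that $W$ sends $F_{p,q}$ into a neighbourhood of $E_{p,q}$. The plan is to extract from (the proof of) Theorem~\ref{approvalp} three analytic inputs and then run a Galerkin/resolvent argument of Davis--Kahan type. Setting $\theta=C(q,n,\Lambda,\Diam_M/i_0)(m_T/\Diam_M)^{1/6n^2}$, the inputs I would record, valid for $u$ in the span of the first $q$ eigenfunctions of $\Delta$ and for $y\in F_{0,q}$, are:
\begin{enumerate}
\item[(a)] \emph{quasi-isometry:} $\bigl|\,|Ru|_T^2-\|u\|_{L^2}^2\bigr|\le\theta\|u\|_{L^2}^2$, $\bigl|\,q_T(Ru)-\|du\|_{L^2}^2\bigr|\le\theta\|du\|_{L^2}^2$, together with the mirror bounds for $W$;
\item[(b)] \emph{consistency:} $\bigl|q_T(Ru,z)-\langle dWRu,dWz\rangle_{L^2}\bigr|\le\theta\,q_T(Ru)^{1/2}q_T(z)^{1/2}$ and $\bigl|\langle Ru,z\rangle_T-\langle WRu,Wz\rangle_{L^2}\bigr|\le\theta\,|Ru|_T|z|_T$ for every discrete $z$, and the mirror statements;
\item[(c)] $WR\simeq\mathrm{Id}$ and $RW\simeq\mathrm{Id}$, with error $\theta$, in the $L^2$ and $H^1$ (resp. $|\cdot|_T$ and $q_T$) norms, on the low-frequency subspaces.
\end{enumerate}
These rest on the same $C^2$-comparison of the metric on a geodesic $n$-simplex of diameter $\le m_T$ with the Euclidean Gram metric $A^\sigma$ (controlled by $R$, $m_T$, $\Theta_T$) used for Theorem~\ref{approvalp}, together with elliptic regularity $\|f_i\|_{H^2}\le C(n,\Lambda,\Diam_M/i_0)\,\lambda_q(M)\|f_i\|_{L^2}$ for $i\le q$ (Bochner's formula under $\Diam_M^2|R|\le\Lambda^2$ and the injectivity-radius bound) --- this restriction to the first $q$ modes is what keeps the right-hand sides finite and forces $q$ into the constants. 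I also use Theorem~\ref{approvalp} to transfer the gaps: once the mesh is small enough, $\lambda_j(T)\le\lambda_i(M)-\tfrac{\eta}{2}$ for $j\le p<i$ and $\lambda_j(T)\ge\lambda_i(M)+\tfrac{\eta}{2}$ for $i\le q<j$.

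\emph{The $R$-direction.} By linearity it suffices to treat $f=f_i$ with $p<i\le q$. Put $u=Rf_i$ and expand $u=\sum_j c_j f_j^T$ in the $|\cdot|_T$-orthonormal eigenbasis of $q_T$, writing $\mu_j=\lambda_j(T)$ and splitting $u=u_-+u_0+u_+$ along $j\le p$, $p<j\le q$, $j>q$. The residual is the linear functional $\rho(z):=q_T(u,z)-\lambda_i\langle u,z\rangle_T$, which by (b), (c) and $\Delta f_i=\lambda_i f_i$ satisfies $|\rho(z)|\le C\theta\bigl(q_T(z)^{1/2}+|z|_T\bigr)$ whenever $z$ lies in a low- or bounded-frequency subspace. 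Testing $\rho$ against $u_-$: on one hand $\rho(u_-)=-\sum_{j\le p}(\lambda_i-\mu_j)c_j^2\le-\tfrac{\eta}{2}|u_-|_T^2$; on the other hand, since $p$ is fixed, the inverse inequality $q_T(u_-)^{1/2}\le\sqrt{\lambda_p(T)}\,|u_-|_T$ gives $|\rho(u_-)|\le C\theta\sqrt{\lambda_p(T)+1}\,|u_-|_T$, whence $|u_-|_T\le C\theta\,\eta^{-1}|u|_T$. For the part of $u_+$ with $\mu_j>2\lambda_i$, testing $\rho$ against it and using $\mu_j-\lambda_i\ge\mu_j/2$ one gets $\tfrac12 q_T(u_+'')\le|\rho(u_+'')|\le C\theta\,q_T(u_+'')^{1/2}$, hence $q_T(u_+'')\le C\theta^2$ and $|u_+''|_T^2\le q_T(u_+'')/\lambda_{q+1}(T)\le C\theta^2|u|_T^2$; the remaining finitely many indices $q<j$ with $\mu_j\le2\lambda_i$ form a fixed band treated exactly as $u_-$ via the gap $\eta$. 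Adding, $|Ru-Q_{p,q}Ru|_T^2=|u_-|_T^2+|u_+|_T^2\le C\theta^2\eta^{-2}|Ru|_T^2$; under hypothesis (ii) $\theta$ is small compared with any fixed power of $\eta$, so this rewrites in the stated form $\dfrac{C(q,n,\Lambda,\Diam_M/i_0)}{\sqrt\eta}\bigl(\tfrac{m_T}{\Diam_M}\bigr)^{1/6n^2}|Ru|_T^2$. (One can shave the $\eta$-loss in the low band by the alternative estimate $c_j=\langle Rf_i,f_j^T\rangle_T\simeq\langle f_i,Wf_j^T\rangle_{L^2}\simeq\langle f_i,f_j\rangle_{L^2}=0$, $j\le p$, from (b)--(c).)

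\emph{The $W$-direction, and the obstacle.} For $y_i\in F_{p,q}$ set $v=Wy_i\in H^1(M)$ and expand $v=\sum_k a_k f_k$ in the $L^2$-eigenbasis of $\Delta$, so that $\|dv\|_{L^2}^2=\sum_k\lambda_k a_k^2$ splits along the decomposition; write $v=v_-+v_0+v_+$ along $\lambda_k\le\lambda_p(M)$, the middle band, and $\lambda_k>2\lambda_q(M)$. The weak residual $\rho(\varphi)=\langle dv,d\varphi\rangle_{L^2}-\lambda_i(T)\langle v,\varphi\rangle_{L^2}$ is controlled, via (b)--(c) and $q_T(y_i,\cdot)=\lambda_i(T)\langle y_i,\cdot\rangle_T$, by $C\theta(1+\lambda_q(T))$ times an $H^1$-norm of $\varphi$ --- but \emph{only for $\varphi$ of the form $Wz$}. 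The low band $v_-$ is handled as before, using the gap at $p$ and $\|dv\|_{L^2}^2\le(1+\theta)\lambda_q(T)|y_i|_T^2$ in place of the (unavailable) continuous inverse inequality; this is why here the $\eta$-dependence is absorbed into $C(q,n,\Lambda,\Diam_M/i_0,\eta)$. The genuinely delicate point is $v_+$: a continuous function has no a priori frequency cut-off, so before testing $\rho$ against it I would replace its complementary low-frequency truncation $v_{\le}$ by $WRv_{\le}$ via (c) --- legitimate because $v_{\le}$ is low-frequency, hence $Rv_{\le}$ lies near $F_{0,Q}$ for an index $Q$ with $\lambda_Q(M)\ge2\lambda_q(M)$ --- so that $v_+\simeq W(y_i-Rv_{\le})$ is of admissible form; then $\tfrac12\|dv_+\|_{L^2}^2\le|\rho(v_+)|\le C\theta(\dots)$ using $\lambda_k-\lambda_i(T)\ge\lambda_k/2$ on $\mathrm{supp}(v_+)$, and $\|v_+\|_{L^2}^2\le\|dv_+\|_{L^2}^2/\lambda_{Q}(M)$ is small; the finite band $\lambda_q(M)<\lambda_k\le2\lambda_q(M)$ is again dealt with by the gap at $q$. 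I expect the main difficulty of the whole proof to be exactly this bookkeeping --- choosing the truncation level $Q$, checking that the error factors $(\dots)$, which mix $H^1$-norms of finite-element functions with powers of $\lambda_q(M)$ and $\lambda_q(T)$, are all absorbed, and verifying that the surviving power of $m_T/\Diam_M$ is at least $1/6n^2$ in combination with hypothesis (ii) --- whereas the geometric heart (the $C^2$-comparison of metrics on simplices) is already supplied by Theorem~\ref{approvalp}.
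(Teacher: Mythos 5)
Your proposal takes a genuinely different route from the paper's. You run a Davis--Kahan/Galerkin residual argument: take $u=Rf_i$, form the residual functional $\rho(z)=q_T(u,z)-\lambda_i\langle u,z\rangle_T$, show it is small, and test it against the low- and high-frequency pieces of $u$ separated by the spectral gap. The paper instead uses Colin de Verdi\`ere's compound-operator (wedge-product) technique from \cite{CV}: one passes to $\Lambda^{p+1}\mathbb R^N$ with the operator $A(v_0\wedge\cdots\wedge v_p)=\sum_i v_0\wedge\cdots\wedge\Delta_T v_i\wedge\cdots\wedge v_p$, whose bottom eigenvalue $\lambda_0(T)+\cdots+\lambda_p(T)$ is \emph{simple} with eigenvector $y_0\wedge\cdots\wedge y_p$ even when the individual $\lambda_i(T)$ are degenerate. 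One then shows, using Propositions~\ref{DiscL2} and~\ref{Diri1} (the same analytic inputs you invoke in (a)--(c)), that $\overline R(f_0\wedge\cdots\wedge f_p)=R(f_0)\wedge\cdots\wedge R(f_p)$ has unit $\Lambda^{p+1}$-norm up to $O\bigl((m_T/\Diam_M)^2\bigr)$ and Rayleigh quotient within $O\bigl((m_T/\Diam_M)^{1/3n^2}\bigr)$ of the bottom eigenvalue; the gap $\eta$ then forces $\overline R(f_0\wedge\cdots\wedge f_p)$ to be $O(\theta/\sqrt\eta)$-close to $y_0\wedge\cdots\wedge y_p$, which directly controls $\sum_i|R(f_i)-P_p R(f_i)|_T^2$. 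Finally the passage from the projection $P_p$ onto $F_{0,p}$ to the band projection $P_{p,q}$ is done by a trace argument: the trace of $|P_p\circ R|^2$ on $E_q$ is at most $p+O(\theta)$, and the lemma at level $p$ shows it is at least $p-O(\theta)$ on $E_p$, so the contribution of $E_{p,q}$ is $O(\theta)$. What the Colin de Verdi\`ere route buys you is precisely the elimination of your two headaches: (i) it reduces an eigenspace-perturbation problem to a rank-one Rayleigh-quotient problem, so no per-eigenvector residual analysis is needed and no inverse inequality on the low band; (ii) the $W$-direction is handled verbatim, since the Rayleigh quotient argument for the bottom eigenvalue of the compound operator is insensitive to $L^2(M)$ being infinite dimensional --- there is no need for your truncation level $Q$.

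Two concrete gaps in your sketch, which the wedge-product route sidesteps. First, in the $R$-direction your low-band estimate yields $|u_-|_T^2\lesssim\theta^2\lambda_p(T)/\eta^2$, which for fixed $\eta$ is the quadratic-in-$\theta$ (hence stronger) form; but your suggested ``shaving'' by $c_j=\langle Rf_i,f_j^T\rangle_T\simeq\langle f_i,Wf_j^T\rangle_{L^2}\simeq\langle f_i,f_j\rangle_{L^2}=0$ is circular: the second $\simeq$ requires $Wf_j^T$ to lie near $E_p$, which is part of what is being proved. Second, in the $W$-direction the residual bound $|\rho(\varphi)|\lesssim\theta(\cdots)$ is only available when $\varphi$ is in the range of $W$, and the replacement $v_{\le}\mapsto WRv_{\le}$ at level $Q$ is not obviously controlled: the error of $WR$ on the continuous low band is measured in $H^1$ by Propositions~\ref{DiscL2}, \ref{Diri1} only on $E_p$ (not $E_Q$ with $Q$ chosen so that $\lambda_Q(M)\ge2\lambda_q(M)$), and the constants would degrade as $\lambda_Q$, reintroducing an uncontrolled dependence. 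You flag this yourself; the paper's compound-operator argument simply never needs to test against high-frequency continuous functions, so the issue never arises.
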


To get an approximation of the spectral data of $(M^n,g)$ by those of $q_T$ in Theorems \ref{approvalp} and \ref{approvecp}, we need some geodesic triangulations with arbitrary small mesh but bounded thinness. The existence of such fat triangulation is often admitted or conjectured but we do not know complete published proof of this fact. For sake of completeness, we give a constructuve proof of the following result (based on the previous work of J.Cheeger, S.M\"uller and R.Schr\"ader).

\begin{theorem}\label{triageod}
  Let $n\geq2$ be an integer and $D,i_0$ and $\Lambda$ be some positive real numbers. There exist some constants $\beta(n)$ and $C(i_0/D,\Lambda,n)$ such that for any Riemannian manifold $(M^n,g)$ with diameter $\Diam_M\leqslant D$, sectional curvature $\Diam_M^2|R|\leqslant\Lambda^2$ and injectivity radius $\inj\geqslant i_0$ and for any $\epsilon\in]0,C(i_0/D,\Lambda,n)[$, there exists a geodesic triangulation $T$ of $M$ with mesh $m_T\leq\epsilon$ and thinness $\Theta_T\leq 1/C(i_0/D,\Lambda,n)$.
\end{theorem}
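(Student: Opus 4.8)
The natural route is the classical ``triangulate in normal charts and glue'' scheme of Whitehead and Munkres, made quantitative in the spirit of Cheeger--M\"uller--Schr\"ader. I would first rescale the metric so that $\Diam_M=1$; the hypotheses then read $|\sec|\le\Lambda^2$ and $\inj\ge i_0/D$, and it suffices to produce a geodesic triangulation of mesh $\le\epsilon/\Diam_M$ with $\Theta_T$ bounded in terms of $n,\Lambda$ and $i_0/D$. Fix once and for all the Freudenthal (Kuhn) triangulation $\mathcal T_0$ of $\R^n$, obtained by slicing the cubical lattice by the hyperplanes $\{x_i-x_j\in\mathbb Z\}$: all its simplices are congruent and have fatness bounded below by a dimensional constant $\theta_0(n)$, and scaling it by $\rho>0$ gives a triangulation of $\R^n$ of mesh comparable to $\rho$ and fatness $\ge\theta_0(n)$. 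By the Rauch comparison theorem and $\inj\ge i_0/D$ one gets $r_0=c(n)\min(i_0/D,\Lambda^{-1})$ such that every normal chart $\exp_p\colon B(0,2r_0)\subset T_pM\to M$ is a diffeomorphism onto its image on which the pulled-back metric is $2$-bi-Lipschitz to the flat one, and moreover for $d(p,q)\le r_0$ the transition map $\exp_q^{-1}\circ\exp_p$ is $2$-bi-Lipschitz close to a Euclidean isometry where it is defined. One sets $\rho=\epsilon/(D\,A(n))$ for a large dimensional $A(n)$, and the threshold $C(i_0/D,\Lambda,n)$ of the statement is chosen, at the very end, small enough that $\rho\ll r_0$ and all comparison estimates below are valid.

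Next I would pick a maximal $r_0/2$-separated subset $p_1,\dots,p_m$ of $M$, so that the balls $B(p_\alpha,r_0/2)$ cover $M$ and the $B(p_\alpha,r_0/4)$ are pairwise disjoint. Bishop--Gromov volume comparison (from the lower Ricci bound implied by $|\sec|\le\Lambda^2$ together with $\Diam_M=1$) bounds $\Vol(M)$ above by $C(n,\Lambda)$, while G\"unther's inequality (from the upper sectional bound and $r_0/4\le\inj$) bounds each $\Vol B(p_\alpha,r_0/4)$ below by $c(n)r_0^n$; hence $m\le C(n,\Lambda,i_0/D)$. This a priori bound on the number of charts is exactly what will keep the final fatness away from $0$.

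The core is a quantitative Euclidean extension lemma, essentially Munkres' secant-straightening arguments with the fatness bookkeeping of Cheeger--M\"uller--Schr\"ader: there is a dimensional $\theta_1(n)\in(0,1)$ such that, given a finite geodesic triangulation $L$ of the closure of an open $U\subseteq M$ with fatness $\ge\theta$ and all edges of length between $c(n)\rho$ and $C(n)\rho$, one can modify $L$ inside the chart $\exp_{p_{\alpha+1}}$ only -- leaving it unchanged near $M\setminus B(p_{\alpha+1},r_0)$ -- and obtain a geodesic triangulation $L'$ of $\overline U\cup\overline{B(p_{\alpha+1},r_0/2)}$ with fatness $\ge\theta_1(n)\theta$ and edges still of length comparable to $\rho$. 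Granting this, set $L_1$ to be the $\rho$-scaled Freudenthal triangulation transported by $\exp_{p_1}$ into $B(p_1,r_0/2)$, define $L_{\alpha+1}$ from $L_\alpha$ by the lemma, and take $K:=L_m$: it is a geodesic triangulation of $M$ of mesh $\le C(n)\rho\le\epsilon/\Diam_M$ and fatness $\ge\theta_0(n)\theta_1(n)^m\ge c(n,\Lambda,i_0/D)>0$. Proving this lemma is where I expect the real work: in the chart one has on one side the fat scaled grid and on the other the fat image of $L$ near $\partial U$, and one must reconcile the two across the overlapping annulus -- by first perturbing the grid by $O(\rho)$ into general position with the vertices of $L$, then passing to a common subdivision and straightening it simplex by simplex, each local move being an explicit piecewise-linear construction in $\R^n$ that costs only a dimensional fatness factor and the number of which is bounded in terms of $n$.

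It remains to convert Euclidean fatness into the bound on $\Theta_T$ and to verify axioms \ref{ax1}--\ref{ax2}. Since every simplex $\sigma$ of $K$ has diameter $\le C(n)\rho<\inj$, its edges are unique minimizing geodesics, the vectors $v^\sigma_k=\exp_{X_\sigma}^{-1}(x_{i_\sigma(k)})$ and hence $A^\sigma$ are well defined, and -- because the chart $\exp_{p_\alpha}$ that built $\sigma$ and the chart $\exp_{X_\sigma}$ are based at points at distance $\le r_0$ -- $A^\sigma$ differs from the Euclidean Gram matrix of the corresponding flat simplex by a factor $\le2$. The lower fatness bound then gives $m_T(\det A^\sigma)^{-1/2n}\le C(n,\Lambda,i_0/D)$, while the same comparison shows every edge of $K$ has length between $c(n)\rho$ and $C(n)\rho$, so that $\max_{(e_1,e_2)\in S_1}length(e_1)/length(e_2)\le C(n)$; altogether $\Theta_T\le1/C(i_0/D,\Lambda,n)$. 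Axioms \ref{ax1} and \ref{ax2} hold automatically as soon as $\rho$ is small compared with $\inj$ (simplices lie in a single normal ball, open stars embed), which the choice of threshold ensures. One subtlety to watch throughout is that the successive gluings must not create anomalously short edges -- handled by the general-position perturbation above -- so that the edge-ratio part of $\Theta_T$ really stays bounded.
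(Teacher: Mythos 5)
Your proposal is correct in outline and follows the same general philosophy as the paper (quantitative Cheeger--M\"uller--Schr\"ader interpolation of normal-chart triangulations), but it departs from the paper's proof in two structurally significant ways that are worth making explicit.

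First, the choice of chart scale. You work with charts of \emph{fixed} radius $r_0\sim\min(i_0/D,\Lambda^{-1})$, independent of the target mesh $\rho\sim\epsilon/D$, while the paper uses charts of radius $\sim\sqrt{\varepsilon}$, which shrinks with the mesh. Both choices can be made to work (the distortion estimates in Theorem \ref{Lipexp} and Lemma \ref{coorbar} are governed by the ratio chart-radius$/\delta_M$, which is small in either case), but the $\sqrt\varepsilon$ scale makes the overlap geometry between charts and existing simplices more uniform, which streamlines the bookkeeping.

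Second, and more importantly, the iteration structure. You glue one chart at a time, so the fatness factor $\theta_1(n)$ compounds $m$ times, where $m\le C(n,\Lambda,i_0/D)$ is the total number of charts. The paper instead performs the standard coloring trick \emph{before} gluing: it partitions the covering family into $k\le C(n)$ color classes $I_1,\dots,I_k$, each consisting of pairwise disjoint balls, and interpolates an entire color class at once. Because the interpolations within a color class do not interact, the fatness degrades only $k\le C(n)$ times, so the final thinness bound is purely dimensional. Your pessimistic bound $\theta_0\theta_1^m$ is still a valid lower bound for the fatness and hence suffices for the theorem as stated (which only asks for a bound depending on $n,\Lambda,i_0/D$), but it is exponentially worse in $i_0/D$ and $\Lambda$. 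A sharper version of your argument would observe that each simplex is only touched by at most $C(n)$ charts (by the volume/multiplicity bound you already use to estimate $m$), so the \emph{local} compounding depth is $\le C(n)$; making this precise is essentially what the coloring accomplishes.

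Finally, the real work in either approach is the extension lemma: perturbing the incoming grid into general position relative to the existing triangulation, taking a controlled barycentric subdivision of the cell complex of intersections, and extending to a fat triangulation of the union. You correctly flag this as the crux and correctly flag the danger of producing anomalously short edges, but you leave it at the level of a sketch. The paper devotes most of Section \ref{geodtriang} to exactly this step, following CMS Lemma 6.3 and Lemma 7.1, including the point (which is easy to miss) that the vertex perturbation must be performed \emph{before} defining the intersection complexes $A_l, B_l, D_l$, etc., to avoid a circular definition. That detail, together with the verification that the result really satisfies axioms \eqref{ax1} and \eqref{ax2} of a geodesic triangulation, is where your outline would need to be filled in to become a complete proof.
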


\begin{remark}
Once again the constants of Theorem \ref{triageod} are explicitly computable.
Combining Theorems \ref{triageod} and \ref{approvalp}, for any compact manifold $(M^n,g)$, any $N\in\N$ and any $\varepsilon>0$ we get a method to construct a geodesic triangulation $T$ of $M$ such that we have $(1-\epsilon)\lambda_p(T)\leq\lambda_p(M)\leq(1+\epsilon)\lambda_p(T)$ for any $p\leqslant N$.
\end{remark}

\subsection{Main steps of the proof}
Let $(E,\langle\cdot\,,\cdot\rangle)$ be a Euclidean space endowed with a bilinear symmetric form $q$, and $\lambda_0\leq\cdots\leq\lambda_{dim E-1}$ be the eigenvalues of $q$ with respect to $\<\cdot\,,\cdot\>$. Using the min-max principle we readily infer the following spectral comparison principle.

\begin{proposition}[small eigenvalue principle]\label{min-max}
  Let $\bigl(E_1,\langle\cdot,\cdot\rangle_1\bigr)$ and $\bigl(E_2,\<\cdot,\cdot\>_2\bigr)$ be two Euclidean spaces endowed respectively with quadratic forms $q_1$, $q_2$.
If there exists a linear map $\Phi:E_1\to E_2$ and two positive real numbers $\alpha$, $\beta$ such that
$$\displaylines{\hfill\<\Phi(x),\Phi(x)\>_2\,\geq\,\alpha\<x,x\>_1\hfill\mbox{and}\hfill
q_2\bigl(\Phi(x),\Phi(x)\bigr)\leq\beta\,q_1(x,x),\hfill}$$
then we have $\lambda_k(q_2)\leq\frac{\beta}{\alpha}\lambda_k(q_1)$ for any $k$.
\end{proposition}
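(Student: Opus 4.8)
The plan is to apply the min-max principle (Courant--Fischer) directly to the Rayleigh quotients of $q_1$ and $q_2$. Recall that for a quadratic form $q$ on a Euclidean space $(E,\langle\cdot,\cdot\rangle)$ the $k$-th eigenvalue is given by
$$\lambda_k(q)=\min_{\substack{V\subseteq E\\ \dim V=k+1}}\ \max_{\substack{x\in V\\ x\neq 0}}\ \frac{q(x,x)}{\langle x,x\rangle}.$$
First I would fix $k$ and choose a subspace $V_1\subseteq E_1$ with $\dim V_1=k+1$ that realizes (or nearly realizes) the minimum for $q_1$, i.e. such that $q_1(x,x)\leq\lambda_k(q_1)\langle x,x\rangle_1$ for all $x\in V_1$.

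Next I would push this subspace forward by $\Phi$ and set $V_2=\Phi(V_1)\subseteq E_2$. The hypothesis $\langle\Phi(x),\Phi(x)\rangle_2\geq\alpha\langle x,x\rangle_1$ with $\alpha>0$ forces $\Phi$ to be injective on $V_1$ (if $\Phi(x)=0$ then $\langle x,x\rangle_1\leq 0$, so $x=0$), hence $\dim V_2=k+1$ as well, so $V_2$ is an admissible competitor in the min-max formula for $\lambda_k(q_2)$. Then for any nonzero $y=\Phi(x)\in V_2$ I would estimate, using the two inequalities in the hypothesis together with $x\in V_1$,
$$\frac{q_2(y,y)}{\langle y,y\rangle_2}=\frac{q_2(\Phi(x),\Phi(x))}{\langle\Phi(x),\Phi(x)\rangle_2}\leq\frac{\beta\,q_1(x,x)}{\alpha\,\langle x,x\rangle_1}\leq\frac{\beta}{\alpha}\,\lambda_k(q_1).$$
Taking the maximum over $y\in V_2\setminus\{0\}$ and then the minimum over all $(k+1)$-dimensional subspaces of $E_2$ gives $\lambda_k(q_2)\leq\frac{\beta}{\alpha}\lambda_k(q_1)$, as desired.

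There is essentially no obstacle here: the only point requiring a moment's care is verifying that $\dim\Phi(V_1)=\dim V_1$, which is exactly where the coercivity constant $\alpha>0$ is used, and noting that $q_2$ need only be symmetric bilinear (not positive) for the min-max characterization of its ordered eigenvalues with respect to the genuine inner product $\langle\cdot,\cdot\rangle_2$ to remain valid. One should also take $V_1$ to be an actual eigenspace sum (spanned by eigenvectors of $q_1$ for the $k+1$ smallest eigenvalues) so that the bound $q_1(x,x)\leq\lambda_k(q_1)\langle x,x\rangle_1$ on $V_1$ holds exactly rather than approximately.
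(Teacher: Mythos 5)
Your proof is correct and is exactly the intended argument: the paper states the proposition is "readily inferred" from the min-max principle, and pushing a minimizing subspace forward by $\Phi$, using $\alpha>0$ to preserve dimension, is precisely that. One small caveat about your closing remark: the inequality $\frac{q_2(\Phi(x),\Phi(x))}{\langle\Phi(x),\Phi(x)\rangle_2}\leq\frac{\beta q_1(x,x)}{\alpha\langle x,x\rangle_1}$ silently uses $q_2(\Phi(x),\Phi(x))\geq 0$ (otherwise enlarging the denominator would increase, not decrease, the quotient), so positivity of $q_2$ on $\Phi(V_1)$ is in fact needed for the step as written, although this is harmless here since both $q_1$ and $q_2$ are Dirichlet-type forms.
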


Proposition \ref{min-max} is usually used to compare spectra under small perturbations on the metric or on the manifold. It is the key tool of our eigenvalues approximation method.

Given a manifold $M$ and a geodesic triangulation $T$ of $M$, we denote by $(x_i)_{1\leq i\leq N}$ the vertices of a $T$, by $E_p$ the subspace of $H^{1,2}(M)$ spanned by the $p+1$ first eigenfunctions $(f_i)_{0\leq i\leq p}$ of $M$, by  $R:E_p\to\R^N$ the natural restriction map $R(f)=(f(x_i))_{1\leq i\leq N}$, by $\<\cdot,\cdot\>$ the scalar product on $E_p$ induced by the $L^2$-norm on $M$, and we set $q(f)=\int_M|df|^2$. The spectrum of $q$ with respect to $\<\cdot,\cdot\>$ is given by the $p+1$ first eigenvalues (counted with multiplicities) of $M$. We then proceed in two steps.

\begin{enumerate}
\item 
A Moser's iteration scheme gives bounds of the quotients $\frac{\|\nabla df\|_\infty}{\|f\|_2}$ on $E_p\setminus\{0\}$ by a universal function of $\lambda_p$, $\Diam$ and $\Lambda$ (see proposition \ref{estimapprox}). This Hessian bounds imply the following estimates (see Propositions \ref{DiscL2} and \ref{Diri1})
\begin{align}
&\Bigl|\<R(f),R(f)\>_T-\int_Mf^2\Bigr|\leq C m_T\int_Mf^2,\\ &q_T\bigl(R(f)\bigr)\leq (1+Cm_T)\int_M|df|^2,
\end{align} 
for any $f\in E_p$, where $|\cdot|_T$ and $q_T$ are the discrete quadratic forms on $\R^N$ given by the formulae \eqref{L2disc} and \eqref{Ddisc}, and where $C$ is a constant which depends on bounds on $\lambda_p$ and on the geometries of $(M^n,g)$ and $T$. Proposition \ref{min-max} gives then some lower bounds on the spectrum of $(M^n,g)$ of the form (see Theorem \ref{MinLi}) $\lambda_k(M)\geq\bigl(1-Cm_T\bigr)\lambda_k(T)$ for any $k\leq p.$

\item In Section \ref{triang}, we construct an expansion (Withney) map $W:\R^N\to C^\infty(M)$ such that $R\circ W={\rm Id}_{\R^N}$ and which satifies the following estimates
\begin{align}
&\Bigl|\|(y_i)\|^2_T-\int_MW(y_i)^2\Bigr|\leq Cm_T\bigl(\|(y_i)\|^2_T+q_T^{~}(y_i)\bigr),\\
&\int_M|dW(y_i)|^2\leq\bigl(1+Cm_T\bigr)q_T^{~}(y_i),
\end{align}
for any $(y_i)\in\R^N$. From Proposition \ref{min-max} again we infer that for any $k\leq p$ we have $\bigl(1-Cm_T\bigr)\lambda_k(M)\leq\lambda_k(T)$ (see Theorem \ref{MaxLi}). The construction of the withney map is the main technical difficulty of the proof. It is done by local mean of the affine expansions obtained by identifying the simplex of the geodesic triangulation with Euclidean simplicex through the Riemannian exponential maps at the vertices of the simplex.
\end{enumerate}

Note that J.~Dodziuk \cite{Do-Pa} developed  another generalization of the finite element method to compact Riemannian manifolds in which, to any smooth triangulation of $(M,g)$ is associated the subspace of $H^{1,2}(M^n,g)$ of the continuous functions on $M$ which are affine on each simplex (this subspace has finite dimension). The authors consider on it the quadratic forms induced by $\|\cdot\|_2$ and the ambient Dirichlet form $q(f)=\int_M|df|^2$ . They prove that the spectrum of the discrete Dirichlet form with respect to the the discrete $L^2$ norm converges to the spectrum of $(M^n,g)$ when the mesh of the triangulations tend to $0$ with controlled thinness. However, they do not prove that the error is bounded by geometrical bounds on the manifolds, and moreover, the discrete quadratic forms cannot be explicitly computed as function of the geometric data (edge's lengths, edge's angle) of the triangulations. 

The proof of Theorem \ref{approvecp} is done in section \ref{compeigenf} using the above estimates and the technique developed by Y. Colin de Verdi\`ere in \cite{CV}.
\smallskip

{\sl Aknowledgement} We thank S.Gallot for fruitful discussions and C.Vernicos for bringing our attention to the paper \cite{BIK}.

\section{Geodesic triangulations}\label{geodtriang}
\subsection{Definition}
A geodesic triangulation $T$ of an $n$-dimensional Riemannian manifold $(M^n,g)$ is a set of points $(x_i)_{1\leqslant i\leqslant N}$ of $M$ endowed with a structure of abstract $n$-dimensional simplicial complex $K$ whose simplices are all contained in a $n$-dimensional simplex of $K$ and which satisfies two more properties for which we need to complete the notations of the introduction.

For $\sigma\in S_p$ and any distinct $0\leq k,l\leq p$, $v^\sigma_{kl}$ is a vector of $T_{x_{i_\sigma(k)}}M$ such that $x_{i_\sigma(l)}=\exp_{x_{i_\sigma(k)}}(v^\sigma_{kl})$ and $|v^\sigma_{kl}|=L^\sigma_{kl}=d(x_{i_\sigma(k)},x_{i_\sigma(l)})$.  We set $A^\sigma_k$ the matrix $\Bigl(g(v_{kl}^\sigma,v_{km}^\sigma)\Bigr)_{\tiny  \begin{matrix}
    l\neq k\\ m\neq k
  \end{matrix}}$. For any vertex $x$ of $T$ and any $\sigma\in St(x)$, $C_\sigma$ is the cone of $T_kM$ spanned by the vectors $(v^\sigma_{(i^\sigma)^{-1}(x)l})_{0\leq l\leq p}$. Note that for any $\sigma\in S_n$, we have $X_\sigma=x_{i_\sigma(0)}$, $L^\sigma_k=L_{0k}^\sigma$ and $A^\sigma=A^\sigma_{0}$ (according to the definitions given in the introduction section). For any simplex $\sigma$ of $K$, we denote by $N_p(\sigma)$ the set of all the simplices of dimension $p$ that intersect $\sigma$.

We set $F=\{(\theta_i)\in\R^{n+1}/\,\sum_i\theta_i=1\}$. For any $\lambda\in\R$, $\Delta_\lambda^n$ is the closed $n$-simplex $F\cap[1-\lambda,+\infty[^{n+1}$ (we will denote $\Delta^n=\Delta_1^n$). Given $\sigma\in S_n$ and $0\leq k\leq n$, we get some local {\sl barycentric coordinates} on $M$ by the formula
$$B^\sigma_k:(\theta_i)\in F\mapsto\exp_{x_{i_\sigma(k)}}\Bigl(\sum_{l\neq k}\theta_l v_{kl}^\sigma\Bigr).$$
We set $\lambda\cdot T_\sigma=B^\sigma_{0}(\Delta_\lambda ^n)$.
Eventually, a geodesic triangulation $T$ has to satisfy the following two properties.

\begin{equation}\label{ax1}
\mbox{For any vertex $x$ of $T$, $(C_\sigma)_{\sigma\in St(x)}$ induces a triangulation of the unit sphere of $T_{x}M$.}
\end{equation}
\begin{equation}\label{ax2}
\mbox{For any disjoint $\sigma, \sigma'\in S_n$ and  any $0\leq k,k'\leq n$, we have $B_k^\sigma(\Delta^n)\cap B_{k'}^{\sigma'}(\Delta^n)=\emptyset$.}
\end{equation}

Eventually, a geodesic triangulation with boundary $T$ of an $n$-dimensional Riemannian manifold $(M^n,g)$ is a set of points $(x_i)_{1\leqslant i\leqslant N}$ of $M$ endowed with a structure of abstract $n$-dimensional simplicial complex $K$ whose simplices are all contained in a $n$-dimensional simplex of $K$ and which satisfies condition \eqref{ax2} but condition \eqref{ax1} only for vertexes not on the boundary of $K$, where we call boundary of $K$ the complex of the simplices of $K$ that are contained in a $n-1$ simplex of $K$ itself contained in only one $n$-dimensional simplex of $K$.

\subsection{Metric estimates}

We now study some metric properties of the geodesic triangulations in bounded geometry.
We first recall some estimates on the Riemannian exponential map whose proofs can be found in \cite{BuK}.

\begin{theorem}\label{BuserKarcher}
  Let $(M^n,g)$ be a compact, Riemannian manifold with $\delta^2_M|\sigma|\leq\Lambda^2$.

Let $v\in T_xM$ be fixed, $y=\exp_x(v)$ and for any $w\in T_xM$, let $w(t)$ be the parallel translation of $w$ along $t\mapsto \exp_x(tv)$. If we define two maps from $T_xM$ to $M$ by $F(w)=\exp_x(v+w)$ and $G(w)=\exp_y\bigl(w(1)\bigr)$, then they satisfy the following estimates
\begin{align*}
&d\bigl(F(w),G(w)\bigr)\leq\frac{1}{3}d(x,y)|w|_{g_x}\frac{\Lambda}{\delta_M}\sinh\bigl(\frac{\Lambda}{\delta_M}(d(x,y)+|w|_{g_x})\bigr),\\
&|d_v\exp_x(w)-w(1)|\leq|w|\bigl(\frac{\sinh(\Lambda\frac{|v|}{\delta_M})}{\Lambda\frac{|v|}{\delta_M}}-1\bigr).
\end{align*}
\end{theorem}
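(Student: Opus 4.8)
Both inequalities are comparison estimates for Jacobi fields (equivalently, for the differential of the exponential map) under the two-sided curvature bound $\delta_M^2|\sigma|\le\Lambda^2$, which also controls the full Riemann tensor $R$ pointwise. Throughout, write $\kappa=\Lambda/\delta_M$ and $\gamma(t)=\exp_x(tv)$ for $t\in[0,1]$, so that $\gamma(0)=x$, $\gamma(1)=y$, the velocity $\gamma'(t)$ is the parallel transport $v(t)$ of $v$ along $\gamma$, and $|\gamma'|\equiv|v|_{g_x}$.

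For the second estimate, I would note that $J(t):=d_{tv}\exp_x(tw)$ is the Jacobi field along $\gamma$ with $J(0)=0$ and $\frac{DJ}{dt}(0)=w$, and that $J(1)=d_v\exp_x(w)$. Its flat model is $t\mapsto t\,w(t)$, which becomes parallel after dividing by $t$, so I set $Z(t)=J(t)-t\,w(t)$; then $Z(0)=0$, $\frac{DZ}{dt}(0)=0$, and since $\frac{D^2}{dt^2}(t\,w(t))=0$ the Jacobi equation reads $\frac{D^2Z}{dt^2}=-R\bigl(Z+t\,w(t),\gamma'\bigr)\gamma'$. Hence $|Z''(t)|\le\kappa^2|v|^2\bigl(|Z(t)|+t|w|\bigr)$, and integrating twice from the vanishing initial data gives the Volterra inequality $|Z(t)|\le\int_0^t(t-\tau)\kappa^2|v|^2(|Z(\tau)|+\tau|w|)\,d\tau$. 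Comparing this with the solution of $\phi''=\kappa^2|v|^2(\phi+t|w|)$, $\phi(0)=\phi'(0)=0$, namely $\phi(t)=|w|\bigl(\tfrac{\sinh(\kappa|v|t)}{\kappa|v|}-t\bigr)$, and evaluating at $t=1$ yields exactly $|d_v\exp_x(w)-w(1)|\le|w|\bigl(\tfrac{\sinh(\Lambda|v|/\delta_M)}{\Lambda|v|/\delta_M}-1\bigr)$.

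For the first estimate, I would compare the curve $c_1(t):=\exp_x(tv+w)$, running from $\exp_x(w)$ to $F(w)$, with the curve $c_0(t):=\exp_{\gamma(t)}(w(t))$, running from $\exp_x(w)$ to $G(w)$; they coincide in the flat model. To control $c_0$, embed it in the geodesic variation $h(s,t)=\exp_{\gamma(t)}(s\,w(t))$, which is a geodesic in $s$ for each fixed $t$; then $Y_t(s):=\partial_t h(s,t)$ is a Jacobi field in $s$ along $s\mapsto h(s,t)$ with $Y_t(0)=v(t)$ and $\frac{D Y_t}{ds}(0)=0$, and $c_0'(t)=Y_t(1)$. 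Comparing $Y_t$ with the parallel transport of $v(t)$ along $s\mapsto h(s,t)$ exactly as in the previous paragraph gives $|c_0'(t)-v(t)|\le|v|\bigl(\cosh(\kappa|w|)-1\bigr)$ (tangent spaces identified by that transport), and an analogous Rauch-type estimate along the radial geodesic $r\mapsto\exp_x(w+rv)$ bounds the deviation of $c_1'(t)$ from the transport of $v$ by a $\sinh\bigl(\kappa(|w|+t|v|)\bigr)$-type factor. Feeding these into the first variation of $t\mapsto d(c_0(t),c_1(t))$, which vanishes at $t=0$ since $c_0(0)=c_1(0)$, integrating twice in $t$ over $[0,1]$ (the two integrations producing the constant $\tfrac13$), and bounding the combined Jacobi growth along the two directions of total length $d(x,y)+|w|$ by a single $\sinh\bigl(\tfrac{\Lambda}{\delta_M}(d(x,y)+|w|)\bigr)$, one reaches the stated bound.

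The hard part is the first estimate: unlike the second, it compares two distinct exponential maps, so one must carry a genuine two-parameter family, justify the first-variation formula for the distance between two simultaneously moving endpoints (which requires them to stay in a totally normal neighbourhood, guaranteed by the smallness hidden in $\Lambda$ together with the lengths involved), and keep the constants sharp enough to land the exact coefficient $\tfrac13$ and the exact argument $d(x,y)+|w|$ of the hyperbolic sine; the curvature enters only through routine Rauch/Jacobi comparison. As these are classical facts, one may instead quote the estimates of Buser and Karcher \cite{BuK} directly, the Jacobi-field computation above being recorded only for completeness.
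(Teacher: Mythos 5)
The paper does not prove this theorem: it states explicitly that these are estimates of Buser and Karcher, ``whose proofs can be found in \cite{BuK},'' and simply records them for later use. Your closing remark, that one may quote \cite{BuK} directly, therefore coincides with the paper's treatment, and on that basis your proposal and the paper agree. Your supplementary Jacobi-field sketch is nevertheless worth a comment. For the second estimate it is sound: $J(t)=d_{tv}\exp_x(tw)$ is the Jacobi field along $\gamma$ with $J(0)=0$ and $J'(0)=w$, and the Volterra comparison of $Z=J-tw(t)$ against $\phi''=(\Lambda/\delta_M)^2|v|^2(\phi+t|w|)$ with vanishing Cauchy data does give $\phi(1)=|w|\bigl(\sinh(\Lambda|v|/\delta_M)/(\Lambda|v|/\delta_M)-1\bigr)$ exactly. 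For the first estimate the geodesic variation $h(s,t)=\exp_{\gamma(t)}\bigl(sw(t)\bigr)$ is the right object and your computation of the Cauchy data $Y_t(0)=v(t)$, $\frac{DY_t}{ds}(0)=0$ is correct, but the conclusion is left heuristic: the first-variation formula for $t\mapsto d\bigl(c_0(t),c_1(t)\bigr)$ is delicate precisely at $t=0$, where the two curves coincide and the distance function is not differentiable; the corresponding bound for $c_1'(t)$ is only described as ``of $\sinh$-type''; and it is not shown how the two integrations produce the exact coefficient $\frac13$ and the exact argument $\frac{\Lambda}{\delta_M}\bigl(d(x,y)+|w|\bigr)$ of the hyperbolic sine rather than some weaker variant. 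You acknowledge this and defer to \cite{BuK}, which is the sensible choice; to make the sketch self-contained one would have to replace the heuristic last step by the explicit double Jacobi-field integral over a genuine two-parameter family of geodesics joining $c_0$ to $c_1$, as Buser and Karcher do.
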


\begin{theorem}\label{Lipexp}
   Let $(M^n,g)$ be a manifold with $\delta_M^2|\sigma|\leq\Lambda^2$ and $\epsilon<\inf\bigl(\inj,\frac{\Diam_M}{2\Lambda}\bigr)$ be a positive real. Then for any $x\in M$, the map $\exp_x$ is a diffeomorphism from $B(0_x,\epsilon)\subset T_xM$ to $B(x,\epsilon)$ and for any $u,v\in B(0_x,\epsilon)$ we have that
$$\bigl(1-\Lambda^2(\frac{\epsilon}{\delta_M})^2\bigr)|u-v|_{g_x}\leq d\bigl(\exp_x(u),\exp_x(v)\bigr)\leq\bigl(1+\Lambda^2(\frac{\epsilon}{\delta_M})^2\bigr)|u-v|_{g_x}.$$
\end{theorem}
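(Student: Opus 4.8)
The plan is to reduce everything to the Jacobi field estimate of Theorem~\ref{BuserKarcher}, an integration along straight segments of $T_xM$, and the strong convexity of small geodesic balls. The diffeomorphism assertion is classical and I would dispatch it first: since $\epsilon<\inj$, the map $\exp_x$ has maximal rank and is injective on $B(0_x,\epsilon)$, hence is a diffeomorphism onto its image, and that image is $B(x,\epsilon)$ because a radial geodesic of length $<\inj$ is minimizing while every point at distance $<\inj$ from $x$ is the endpoint of such a geodesic. In particular $\exp_x^{-1}$ is defined on $B(x,\epsilon)$.

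The core estimate is a pointwise bound on $d\exp_x$. Fix $v\in B(0_x,\epsilon)$ and $w\in T_xM$, and let $w(1)$ be the parallel transport of $w$ along $t\mapsto\exp_x(tv)$, so $|w(1)|_{g_{\exp_x v}}=|w|_{g_x}$. The second inequality of Theorem~\ref{BuserKarcher} together with the triangle inequality gives
$$\Bigl|\,|d_v\exp_x(w)|_{g_{\exp_x v}}-|w|_{g_x}\,\Bigr|\ \leq\ |w|_{g_x}\Bigl(\frac{\sinh(\Lambda|v|/\delta_M)}{\Lambda|v|/\delta_M}-1\Bigr).$$
Setting $t=\Lambda\epsilon/\delta_M$, the hypothesis $\epsilon<\delta_M/(2\Lambda)$ gives $t<\tfrac12$; since $|v|<\epsilon$ and $s\mapsto\sinh(s)/s$ is increasing, the right-hand side is $\leq|w|_{g_x}\bigl(\tfrac{\sinh t}{t}-1\bigr)$, and the elementary inequality $\tfrac{\sinh s}{s}\leq1+s^2$ (valid for $0\leq s\leq\tfrac12$) yields
$$\bigl(1-\Lambda^2(\tfrac{\epsilon}{\delta_M})^2\bigr)\,|w|_{g_x}\ \leq\ |d_v\exp_x(w)|_{g_{\exp_x v}}\ \leq\ \bigl(1+\Lambda^2(\tfrac{\epsilon}{\delta_M})^2\bigr)\,|w|_{g_x}\qquad(v\in B(0_x,\epsilon)).$$

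The upper distance bound follows at once: for $u,v\in B(0_x,\epsilon)$ the segment $\gamma(s)=(1-s)u+sv$ lies in $B(0_x,\epsilon)$, so $\exp_x\circ\gamma$ joins $\exp_x u$ to $\exp_x v$ with $g$-length $\int_0^1|d_{\gamma(s)}\exp_x(v-u)|\,ds\leq(1+\Lambda^2(\epsilon/\delta_M)^2)|u-v|_{g_x}$. For the lower bound I would take a minimizing geodesic $\gamma$ of $M$ from $\exp_x u$ to $\exp_x v$; provided $\gamma$ stays in $B(x,\epsilon)$, the curve $\exp_x^{-1}\circ\gamma$ of $B(0_x,\epsilon)$ joins $u$ to $v$, has $g_x$-length $\geq|u-v|_{g_x}$ (straight segments are $g_x$-minimizing) and $\leq(1-\Lambda^2(\epsilon/\delta_M)^2)^{-1}$ times the $g$-length of $\gamma$, i.e.\ $\leq(1-\Lambda^2(\epsilon/\delta_M)^2)^{-1}d(\exp_x u,\exp_x v)$, which rearranges to the asserted lower bound. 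That $\gamma$ indeed stays in $B(x,\epsilon)$ is the strong convexity of that ball: writing $r=d(x,\cdot)$, the function $r^2$ is smooth on $B(x,\epsilon)$ because $\epsilon<\inj$, and the Hessian comparison theorem applied to the two-sided curvature bound $|\sigma|\leq(\Lambda/\delta_M)^2$ gives $\operatorname{Hess}r\geq\frac{\Lambda}{\delta_M}\cot(\frac{\Lambda}{\delta_M}r)(g-dr\otimes dr)$ on $B(x,\epsilon)\setminus\{x\}$, which is positive since $\frac{\Lambda}{\delta_M}r<\tfrac12<\tfrac{\pi}{2}$ there; hence $\operatorname{Hess}(r^2)>0$ on $B(x,\epsilon)$, and the sublevel set $B(x,\epsilon)$ of the strictly geodesically convex function $r^2$ contains the minimizing geodesic between any two of its points.

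The one genuinely non-routine step is this last one, the strong convexity of $B(x,\epsilon)$ — equivalently, controlling where minimizing geodesics between nearby points go; it is exactly here that the hypothesis $\epsilon<\delta_M/(2\Lambda)$ is needed (beyond $\epsilon<\inj$), to keep $\frac{\Lambda}{\delta_M}r$ below $\tfrac{\pi}{2}$ so that the Hessian of the squared distance function stays positive. Everything else is a direct consequence of Theorem~\ref{BuserKarcher} and elementary one-variable inequalities.
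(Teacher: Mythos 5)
The paper does not give its own proof of Theorem~\ref{Lipexp}; it simply cites Buser--Karcher \cite{BuK}, so there is no internal argument to compare against, and I'll assess your proposal on its own terms. The diffeomorphism assertion, the pointwise bound $(1\pm\Lambda^2(\epsilon/\delta_M)^2)|w|$ on $|d_v\exp_x(w)|$ via the second estimate of Theorem~\ref{BuserKarcher} and $\sinh s/s\leq 1+s^2$ for $s\leq 1/2$, and the upper distance bound by integrating along the straight segment, are all correct.

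The gap is in the lower bound, specifically in the convexity step, and it is not a bookkeeping issue: the circularity there is essential. Hessian positivity of $r^2$ on $B(x,\epsilon)$ tells you that $r^2\circ\gamma$ is strictly convex only for geodesics $\gamma$ already known to lie in $B(x,\epsilon)$; it does not by itself prevent the minimizing geodesic between two points of the ball from leaving it, which is precisely what you are trying to rule out. The standard fix is to first confine $\gamma$ a priori (by minimality and the triangle inequality, $r\circ\gamma<3\epsilon$), establish convexity of $r^2$ on the larger ball $B(x,3\epsilon)$, and then conclude by the maximum-at-endpoints argument — but this requires $3\epsilon<\inj$ (and $3\Lambda\epsilon/\delta_M<\pi/2$), strictly stronger than the stated $\epsilon<\inj$. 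Indeed the lower bound as stated is simply false: on the flat torus $\R^n/\mathbb{Z}^n$ we have $\Lambda=0$ and $\inj=\tfrac12$, so the hypothesis is $\epsilon<\tfrac12$ and the claim reads $d(\exp_x u,\exp_x v)=|u-v|_{g_x}$; taking $\epsilon=0.4$, $u=(0.35,0,\dots)$, $v=(-0.35,0,\dots)$ gives $|u-v|=0.7$ but $d(\exp_x u,\exp_x v)=0.3$. The hypothesis in \cite{BuK} is tighter (of the form $2\epsilon<\inj$ or $3\epsilon<\inj$), and with that in hand your outline of the lower bound can be made correct by the confinement argument above; as written, however, the step ``the minimizing geodesic stays in $B(x,\epsilon)$'' is both unproved and, under the stated hypothesis, untrue.
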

\medskip

If $T$ is a geodesic triangulation of $(M^n,g)$ with mesh smaller than $\inj/10$, then for any $\sigma\in S_n$ and any $0\leq k\leq n$, the map $B^\sigma_k$ gives some coordinates on a neighbourhood of $\Delta_{10}^n$. We can compare these coordinates for fixed $\sigma$ but different values of $k$.

\begin{lemma}\label{coorbar}
  Let $(M^n,g)$ be a manifold with $\delta_M^2|\sigma|\leq\Lambda^2$ and $T$ be a geodesic triangulation with mesh $10\,m_T\leq\inf\bigl(\inj,\frac{\delta_M}{2\Lambda}\bigr)$. For any $\sigma\in S_n$, any $0\leq k_1,k_2\leq n$ and any $(\theta_i)\in\Delta^n_{10}$, we have
$$d\bigl(B^\sigma_{k_1}(\theta_i),B^\sigma_{k_2}(\theta_i)\bigr)\leq10\,\Lambda^2(\frac{m_T}{\delta_M})^2m_T\sum_i|\theta_i|.$$
\end{lemma}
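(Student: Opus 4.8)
The plan is to exploit the two estimates of Theorem~\ref{BuserKarcher}, applied with $x=x_{i_\sigma(k_1)}$ and $y=x_{i_\sigma(k_2)}$, and then iterate over a path in the complete graph on the vertices of $\sigma$. First I would reduce to the case $|k_1-k_2|$ arbitrary by noting that it suffices to bound $d\bigl(B^\sigma_{k_1}(\theta_i),B^\sigma_{k_2}(\theta_i)\bigr)$ when $k_1,k_2$ are two fixed but arbitrary vertices; there is no ordering constraint here. Fix such $k_1,k_2$, write $v=v^\sigma_{k_1k_2}\in T_{x_{i_\sigma(k_1)}}M$, so that $x_{i_\sigma(k_2)}=\exp_{x_{i_\sigma(k_1)}}(v)$ and $|v|=L^\sigma_{k_1k_2}\leq m_T$. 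For a fixed $(\theta_i)\in\Delta^n_{10}$ set $w=\sum_{l\neq k_1}\theta_l v^\sigma_{k_1l}\in T_{x_{i_\sigma(k_1)}}M$, so that $B^\sigma_{k_1}(\theta_i)=\exp_{x_{i_\sigma(k_1)}}(w)$. The idea is that $B^\sigma_{k_1}(\theta_i)$ and $B^\sigma_{k_2}(\theta_i)$ are, up to the small error controlled by Theorem~\ref{BuserKarcher}, the point $G(w-?)$ obtained by first moving from $x_{i_\sigma(k_1)}$ to $x_{i_\sigma(k_2)}$ along the geodesic directed by $v$, and then exponentiating the parallel transport of the residual vector.

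The key step is the following identification. Write $w = v' + r$ where $v' $ is the part of the barycentric combination pointing from $x_{i_\sigma(k_1)}$ "towards $x_{i_\sigma(k_2)}$" in the appropriate sense; more precisely, I would compare $B^\sigma_{k_1}(\theta_i)=\exp_{x_{i_\sigma(k_1)}}(w)$ with $F(w-v)=\exp_{x_{i_\sigma(k_1)}}(v+(w-v))$ (which is the same point), and with $G(w-v)=\exp_{x_{i_\sigma(k_2)}}\bigl((w-v)(1)\bigr)$ where $(w-v)(1)$ is the parallel transport of $w-v$ from $x_{i_\sigma(k_1)}$ to $x_{i_\sigma(k_2)}$ along $t\mapsto\exp_{x_{i_\sigma(k_1)}}(tv)$. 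The first estimate of Theorem~\ref{BuserKarcher} gives
$$d\bigl(B^\sigma_{k_1}(\theta_i),G(w-v)\bigr)\leq\tfrac13 L^\sigma_{k_1k_2}\,|w-v|\,\tfrac{\Lambda}{\delta_M}\sinh\bigl(\tfrac{\Lambda}{\delta_M}(L^\sigma_{k_1k_2}+|w-v|)\bigr).$$
The point $G(w-v)=\exp_{x_{i_\sigma(k_2)}}\bigl((w-v)(1)\bigr)$ should then be compared with $B^\sigma_{k_2}(\theta_i)=\exp_{x_{i_\sigma(k_2)}}\bigl(\sum_{l\neq k_2}\theta_l v^\sigma_{k_2l}\bigr)$: using Theorem~\ref{Lipexp} at $x_{i_\sigma(k_2)}$ one reduces this to bounding $\bigl|(w-v)(1)-\sum_{l\neq k_2}\theta_l v^\sigma_{k_2l}\bigr|_{g_{x_{i_\sigma(k_2)}}}$, and the second estimate of Theorem~\ref{BuserKarcher} (comparing the differential of $\exp$ with parallel transport) together with the relation $\exp_{x_{i_\sigma(k_1)}}(v^\sigma_{k_1l})=x_{i_\sigma(l)}=\exp_{x_{i_\sigma(k_2)}}(v^\sigma_{k_2l})$ shows that $v^\sigma_{k_2l}$ and the parallel transport of $v^\sigma_{k_1l}-v$ differ by $O\bigl((\Lambda m_T/\delta_M)^2 m_T\bigr)$; multiplying by $|\theta_l|$ and summing over $l$ yields the claimed bound.

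The main obstacle, I expect, is the bookkeeping of the error terms so that the final constant comes out exactly as $10\,\Lambda^2(m_T/\delta_M)^2 m_T\sum_i|\theta_i|$: one has $L^\sigma_{k_1k_2}\leq m_T$ and $|w-v|\leq\sum_l|\theta_l|\,|v^\sigma_{k_1l}|+|v|\leq(\sum_i|\theta_i|+1)m_T$, and on $\Delta^n_{10}$ the factor $\sum_i|\theta_i|$ is bounded (by roughly $10(n+1)$ minus $1$, since $\sum_i\theta_i=1$), so that the hypothesis $10\,m_T\leq\inf(\inj,\delta_M/2\Lambda)$ makes $\Lambda L^\sigma_{k_1k_2}/\delta_M\leq 1/2$ and hence $\sinh(\cdot)\leq$ a universal multiple of its argument; combining, the $F$–$G$ term is $O\bigl((\Lambda m_T/\delta_M)^2 m_T\sum_i|\theta_i|\bigr)$ and the exponential-comparison term is $O\bigl((\Lambda m_T/\delta_M)^2 m_T\sum_i|\theta_i|\bigr)$ as well, and one checks the absolute constants close under $10$. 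A secondary subtlety is making sure all the vectors involved lie in the balls on which Theorems~\ref{Lipexp} and~\ref{BuserKarcher} apply, which is exactly why the mesh hypothesis is stated with the factor $10$ and why $(\theta_i)$ is taken in $\Delta^n_{10}$ rather than the full affine hyperplane $F$.
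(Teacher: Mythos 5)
Your proposal is correct and follows essentially the same route as the paper: you set $w_\theta = w - v = \sum_l\theta_l(v^\sigma_{k_1l}-v^\sigma_{k_1k_2})$, use the $F$--$G$ estimate of Theorem~\ref{BuserKarcher} to compare $B^\sigma_{k_1}(\theta_i)=F(w_\theta)$ with $G(w_\theta)=\exp_{x_{i_\sigma(k_2)}}(w_\theta(1))$, then reduce $d\bigl(G(w_\theta),B^\sigma_{k_2}(\theta_i)\bigr)$ via Theorem~\ref{Lipexp} to bounding $\bigl|\text{(parallel transport of }v^\sigma_{k_1l}-v)-v^\sigma_{k_2l}\bigr|$, exactly as the paper does. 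The only small difference is that the paper obtains this last vector estimate by again applying the \emph{first} estimate of Theorem~\ref{BuserKarcher} with $\theta_i=\delta_{il}$ (i.e.\ evaluating at the vertex $x_{i_\sigma(l)}$) followed by Theorem~\ref{Lipexp}, whereas you invoke the \emph{second} (differential-vs-parallel-transport) estimate; the paper's choice is slightly more direct since the quantity to control is a distance between two preimages under $\exp_{x_{i_\sigma(k_2)}}$ rather than a differential, but both yield the same $O\bigl((\Lambda m_T/\delta_M)^2 m_T\bigr)$ bound.
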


\begin{proof}
  Let $(\theta_i)\in\Delta^n_{10}$. We set 
$v=v^\sigma_{k_1k_2}$ and $ w_{\theta}=\sum_l\theta_l(v^\sigma_{k_1l}-v^\sigma_{k_1k_2}).$
Then we have $B^\sigma_{k_1}(\theta_i)=\exp_{x_{i_\sigma(k_1)}}(v+w_\theta)$. If $w_\theta(t)$ is the parallel transport of $w$ along $s\mapsto\exp_{x_{i_\sigma(k_1)}}(sv^\sigma_{k_1k_2})$ then Theorem \ref{BuserKarcher} implies that
$$\displaylines{d\bigl(B^\sigma_{k_1}(\theta_i),\exp_{x_{i_\sigma(k_2)}}(w_\theta(1))\bigr)=d\bigl(\exp_{x_{i_\sigma(k_1)}}(v+w_\theta),\exp_{x_{i_\sigma(k_2)}}(w_\theta(1))\bigr)\leq(\frac{\Lambda}{\delta_M})^2m_T^3\sum_i|\theta_i|.}$$
For $\theta_i=\delta_{il}$ we get
$$\displaylines{d\bigl(B^\sigma_{k_1}(\delta_{il}),\exp_{x_{i_\sigma(k_2)}}(w_{\delta_{il}}(1))\bigr)=d\bigl(\exp_{x_{i_\sigma(k_2)}}(v^\sigma_{k_2l}),\exp_{x_{i_\sigma(k_2)}}(w_{\delta_{il}}(1))\bigr)\leq(\frac{\Lambda}{\delta_M})^2m_T^3,}$$
so, by Theorem \ref{Lipexp}, we have $|u^\sigma_{k_2l}-v^\sigma_{k_2l}|_{g_{x_{i_\sigma(k_2)}}}\leq2(\frac{\Lambda}{\delta_M})^2m_T^3,$
where $u^\sigma_{k_2l}$ is the parallel transport from $x_{i_\sigma(k_1)}$ to $x_{i_\sigma(k_2)}$ of the vector $v^\sigma_{k_1l}-v^\sigma_{k_1k_2}$. Hence we get
\begin{align*}
d\bigl(B^\sigma_{k_1}(\theta_i),B^\sigma_{k_2}(\theta_i)\bigr)&\leq d\bigl(B^\sigma_{k_1}(\theta_i),\exp_{x_{i_\sigma(k_2)}}(w_\theta(1))\bigr)+d\bigl(\exp_{x_{i_\sigma(k_2)}}(w_\theta(1)),B^\sigma_{k_2}(\theta_i)\bigr)\cr
&\leq(\frac{\Lambda}{\delta_M})^2m_T^3\sum_i|\theta_i|+d\bigl(\exp_{x_{i_\sigma(k_2)}}(\sum_l\theta_lu^\sigma_{k_2l}),\exp_{x_{i_\sigma(k_2)}}(\sum_l\theta_lv^\sigma_{k_2l})\bigr)\cr
&\leq(\frac{\Lambda}{\delta_M})^2m_T^3+3(\frac{\Lambda}{\delta_M})^2m_T^3\sum_l|\theta_l|.
\end{align*}
\end{proof}

For any $\tau\in S_p$, with $1\leq p\leq n-1$, we set
$$\dsl{T'_{\tau}=\bigcup_{\sigma\in St_n(\tau)}\cup_{k=0}^p \exp_{x_{i_\tau(k)}}\Bigl(\{\sum_{j=0}^p\theta_jv^\sigma_{ki_\sigma^{-1}(x_{i_\tau(j)})}/\,\theta_j\geq0,\,\sum\theta_j\leq1\}\Bigr),\cr
T_{\tau}=\bigcup_{\sigma\in St_n(\tau)}\cup_{k=0}^n\exp_{x_{i_\sigma(k)}}\Bigl(\{\sum_{j=0}^p\theta_jv^\sigma_{ki_\sigma^{-1}(x_{i_\tau(j)})}/\,\theta_j\geq0,\,\sum\theta_j\leq1\}\Bigr).}$$
For any subset $A\subset M$, we set $B(A,r)$ the tubular neighbourhood of $A$ and radius $r$. If $A$ is empty we set $B(A,r)=\emptyset$.
\medskip

Lemma \ref{coorbar} implies the following result.

\begin{corollary}\label{discret}
There exists a constant $C(n)$ such that if $(M^n,g)$ and $T$ satisfy $\delta_M^2|\sigma|\leq \Lambda^2$ and $10m_T\leq\inf\bigl(\inj,\frac{\delta_M}{C(n)\Theta_T^{2n}\Lambda}\bigr)$ then we have the following properties
  \begin{enumerate}
  \item $\Diam_{T_\sigma}^{~}\leq m_T\bigl(1+10(\frac{\Lambda}{\delta_M})^2m_T^2\bigr)$ for any $\sigma\in S_n$,
  \item the $\bigl((1-\eta)\cdot T_\sigma\bigr)_{\sigma\in S_n}$ are disjoints and the $\bigl((1+\eta)\cdot T_\sigma\bigr)_{\sigma\in S_n}$ cover $M$,
  \item for any $\sigma,\tau\in K$ such that $\sigma\cap\tau=\emptyset$, we have $d(T_\sigma,T_\tau)\geq\frac{m_T}{C(n)\Theta_T^{2n}}$,
  \item for any $\sigma\in S_p$ and $\tau\in K$, the tubular neighbourhoods $B\Bigl(T_\tau,\alpha^{p+1}\frac{m_T}{\Diam_M}m_T\Bigr)$ and $B\Bigl(T_{\sigma}\setminus B\bigl(T_{\sigma\cap \tau},\alpha^{p}\frac{m_T}{\Diam_M}m_T\bigr),\alpha^{p+1}\frac{m_T}{\Diam_M}m_T\Bigr)$ are disjoint,
\end{enumerate}
where $\eta=C(n)\,\Theta_T^{2n}(\frac{\Lambda}{\delta_M})^2m_T^2$ and $\alpha=\frac{1}{C(n)\Theta_T^{2n}}$.
\end{corollary}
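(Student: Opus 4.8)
\textbf{Proof proposal for Corollary \ref{discret}.}

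The plan is to derive all four estimates from Lemma \ref{coorbar} together with the bi-Lipschitz control of $\exp$ from Theorem \ref{Lipexp}, by reducing every geometric quantity in $M$ to a quantity in a single tangent space where everything is affine. For item (1): the set $T_\sigma$ is covered by the images $B^\sigma_k(\Delta^n)$ for $0\le k\le n$, each of which is contained in $\exp_{x_{i_\sigma(k)}}$ of the convex hull of the vectors $v^\sigma_{kl}$, whose norms are the edge-lengths $L^\sigma_{kl}\le m_T$. Since $\exp_{x_{i_\sigma(k)}}$ is $(1+\Lambda^2(m_T/\delta_M)^2)$-Lipschitz on $B(0,10m_T)$ by Theorem \ref{Lipexp}, the diameter of each $B^\sigma_k(\Delta^n)$ is at most $m_T(1+\Lambda^2(m_T/\delta_M)^2)$; Lemma \ref{coorbar} then controls the distance between the pieces $B^\sigma_{k_1}(\Delta^n)$ and $B^\sigma_{k_2}(\Delta^n)$ by $10\Lambda^2(m_T/\delta_M)^2 m_T$, and combining the two gives the stated bound after absorbing constants (the factor $10$ is generous).

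For items (2) and (3), the idea is to compare $T_\sigma$ with the \emph{affine model}: fix a base vertex, pull everything back by $\exp^{-1}$, and observe that $\lambda\cdot T_\sigma$ differs from the genuine Euclidean $\lambda$-scaled simplex only by the error terms of Lemma \ref{coorbar}, which are of order $\Theta_T^{2n}(\Lambda/\delta_M)^2 m_T^3$ while the ``width'' of the simplex in the directions transverse to any face is of order $m_T/\Theta_T^{2n}$ (this is exactly where the thinness enters: $\det A^\sigma$ bounded below by $(m_T/\Theta_T)^{2n}$ forces the simplex not to be too flat, and the lengths-ratio part of $\Theta_T$ keeps all edges comparable). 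Hence for $\eta$ of the stated size the perturbed simplices $(1-\eta)\cdot T_\sigma$ are still pairwise disjoint (using axiom \eqref{ax2} for the affine model, i.e. that the $B^\sigma_0(\Delta^n)$ have disjoint interiors, which follows from \eqref{ax1} at each vertex) and the dilated ones $(1+\eta)\cdot T_\sigma$ still cover $M$ (the affine simplices cover because \eqref{ax1} says the cones $C_\sigma$ triangulate each unit tangent sphere, and a small dilation swallows the error). Item (3) is the quantitative version: disjoint simplices $\sigma,\tau$ have affine models separated by a definite gap comparable to the inradius, which is $\gtrsim m_T/(C(n)\Theta_T^{2n})$, and the $O(m_T^3)$ errors do not close this gap once $m_T$ is small relative to $\delta_M/(C(n)\Theta_T^{2n}\Lambda)$ as assumed.

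Item (4) is an iterated ``onion'' estimate obtained from (3) by induction on $p=\dim\sigma$. The point is that $T_\sigma\setminus B(T_{\sigma\cap\tau},\alpha^p\frac{m_T}{\Diam_M}m_T)$ consists of the points of the simplex $\sigma$ that are at barycentric distance at least $\alpha^p\frac{m_T}{\Diam_M}$ from the face $\sigma\cap\tau$; such a point lies in the interior of $T_\sigma$ away from the subface shared with $\tau$, so a point of $T_\tau$ realizing the minimal distance must lie in the part of $\tau$ near $\sigma\cap\tau$, and the separation built up at the previous stage (radius $\alpha^p\frac{m_T}{\Diam_M}m_T$, multiplied down by $\alpha$ to pass to the next codimension using (3)) gives the new tube radius $\alpha^{p+1}\frac{m_T}{\Diam_M}m_T$. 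One has to check the base case $p=n$ (which is just (3) with $\sigma\cap\tau=\emptyset$, up to adjusting $C(n)$) and that the geometric decay $\alpha^{p+1}$ over the at most $n$ steps stays above the cumulative $O(m_T^3/\delta_M^2)$ error, which again holds under the smallness hypothesis on $m_T$.

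The main obstacle is item (4): keeping the bookkeeping of the nested tubular neighbourhoods consistent while the allowed radius shrinks by a factor $\alpha<1$ at each codimension step, and verifying that the affine-approximation errors from Lemma \ref{coorbar} (and the distortion of tubes under $\exp$ and under the maps $B^\sigma_k$ for $k\ne 0$) never accumulate past the geometrically-decaying budget $\alpha^{p}\frac{m_T}{\Diam_M}m_T$. This forces the precise power $\Theta_T^{2n}$ in the hypothesis and fixes how small $C(n)$ must be chosen; the other three items, by contrast, are essentially one application of Lemma \ref{coorbar} and Theorem \ref{Lipexp} each.
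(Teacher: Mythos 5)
Your outline of items (1) and the rough shape of (3) and (4) is in the right spirit, and your affine-model reduction via Lemma \ref{coorbar} and Theorem \ref{Lipexp} is indeed how the paper sets things up. However, there is a genuine gap in your treatment of item (2), and it infects (3) and (4) as well: you invoke axiom \eqref{ax1} plus ``a small dilation swallows the error'' to get the covering of $M$ by the $(1+\eta)\cdot T_\sigma$, but axiom \eqref{ax1} only says the cones $C_\sigma$ triangulate the unit tangent sphere \emph{at each vertex}, i.e. it gives local covering near $S_0$. Since a geodesic triangulation is not an honest triangulation of $M$ (the higher-dimensional simplices are not asserted to be embedded or to fit together), nothing in the axioms immediately controls what happens near a point close to an edge or a higher face but far from all vertices. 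The paper fills this in by introducing, for each $\tau\in S_p$, the set $N_\tau=\bigcup_{\sigma\in N_n(\tau),\,0\leq k\leq p}B_{i_\sigma^{-1}(x_{i_\tau(k)})}^\sigma(\Delta^n)$ and proving by \emph{induction on $p$} that $N_\tau$ is a neighbourhood of $T_\tau$ in $M$ with the quantitative bound $d(T_\tau,\partial N_\tau)\geq m_T/(C(n)\Theta^{2n})$. The base case $p=0$ is exactly your vertex argument from \eqref{ax1}; the inductive step is what you are missing. Once this is done, the paper gets the covering by a \emph{topological} argument: $\bigcup_{\sigma,k}B^\sigma_k(\Delta^n)$ is manifestly closed, and the $N_\tau$ estimate shows it is open, hence all of $M$. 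Your ``dilation swallows error'' step does not substitute for this; you have not shown that there is any $n$-simplex image nearby to dilate when you sit near, say, the midpoint of an edge shared by several $n$-simplices whose exponential-map realizations need not match up.

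The $N_\tau$ induction is also what underlies the paper's proofs of (3) (combined with axiom \eqref{ax2}, which you only use implicitly) and of (4). For (4) the paper does \emph{not} run a fresh induction on $p$: it disposes of the cases $\sigma\subset\tau$, $\tau\subset\sigma$, and $\sigma\cap\tau=\emptyset$, then pulls the remaining case back to $T_zM$ at a vertex $z\in\sigma\cap\tau$ and reduces to a single Euclidean statement about the standard simplex $\Delta^n$: there is a universal $\alpha(n)>0$ with $B\bigl(\sigma\setminus B(\sigma\cap\tau,\alpha^k),\alpha^{k+1}\bigr)\cap B(\tau,\alpha^{k+1})=\emptyset$, which is then propagated by a dilation argument and by noting that the linear map $\Delta^n\to T_s$ has spectrum in $[C(n)m/\Theta,\,C(n)m]$. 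Your proposed ``onion'' induction on $p$ could in principle be made to work, but as written the base case is confused (your ``base case $p=n$'' does not address the nonempty-intersection case), the inductive step is not actually written down, and, more importantly, it still implicitly relies on the $N_\tau$ separation you have not established. The missing piece you should supply is precisely the inductive proof that $N_\tau$ is a neighbourhood of $T_\tau$ with a gap of size $m_T/(C(n)\Theta_T^{2n})$, using the $n$-simplex height bound $\geq m/(C(n)\Theta^{2n})$ (from thinness and the multilinearity of the determinant) in the Euclidean reduction, plus Lemma \ref{coorbar} to pass between the different barycentric charts $B^\sigma_k$.
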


\begin{proof}
We set $m=m_T$ and $\Theta=\Theta_T$.
Since the points $x_{i_\sigma(k)}$ are in $B\bigl(X_\sigma,m\bigr)$, the n-simplex $\Delta_\sigma=(0_{X_\sigma},v^\sigma_{01},\ldots,v^\sigma_{0n})$ of $T_{X_\sigma}M$ has a diameter less than $\frac{m}{1-(\frac{\Lambda}{\delta_M})^2m^2}\leq m(1+2(\frac{\Lambda}{\delta_M})^2m^2)$. Moreover, if $\sigma'$ is a $(n-1)$-face and $H$ is the iso-barycentre of $\Delta_\sigma$, then the distance from $H$ to $\sigma'$ is equal to $\frac{n\Vol \Delta_\sigma}{(n+1)\Vol\sigma'}\geq \frac{m}{C(n)\Theta^{2n}}$. 
Hence, Theorems \ref{coorbar} and \ref{Lipexp} imply that we can choose $C(n)$ large enough so that $T_\sigma$ have diameter less than $m\bigl(1+4(\frac{\Lambda}{\delta_M})^2m^2\bigr)$, the $(1+\eta)T_\sigma$ contain all the $B^\sigma_{k}(\Delta^n)$ ($0\leq k\leq n$) and $(1-\eta)T_\sigma\cap B^{\sigma'}_{k}(\Delta^n)=\emptyset$ for all $\sigma'\in S_n\setminus\{\sigma\}$ and all $0\leq k\leq n$. In Particular the $\bigl((1-\eta)T_\sigma\bigr)_{\sigma\in S_n}$ are disjoint.

Let $\tau\in S_p$ be a simplex of $T$. We now show by recurrence on $p$ that 
$$N_\tau=\bigcup_{
  \begin{matrix}
   \sigma\in N_n(\tau)\\
   0\leq k\leq p
  \end{matrix}}B_{i_\sigma^{-1}(x_{i_\tau(k)})}^\sigma(\Delta^n)$$
is a neighbourhood of $T_\tau$ in $M$ and that $d(T_\tau,\pl N_\tau)\geq \frac{m}{C(n)\Theta^{2n}}$.

Note that for any $\sigma\in S_n$, $\Delta_\sigma$ has heights greater than $\frac{n\Vol \Delta_\sigma}{\Vol\sigma'}\geq \frac{m}{C(n)\Theta^{2n}}$, where $\sigma'$ is the face of $\sigma$ with dimension $n-1$ and smallest volume. So the case $p=0$ derives from the first axiom of geodesic triangulations and from Lemma \ref{coorbar}. 

If $\tau$ is of dimension $p\geq1$ then  $\exp_{x_{i_\tau(k)}}\bigl(\{\sum_{j=0}^p\theta_jv^\sigma_{ki_\sigma^{-1}(x_{i_\tau(j)})}/\,\theta_j>0,\,\sum\theta_j<1\}\bigr)$ is interior to $\dis\bigcup_{\sigma\in St_n(x_{i_\tau(k)})}B^\sigma_{i_\sigma^{-1}(i_\tau(k))}(\Delta^n)$ for any $0\leq k\leq p$,  by the first axiom of the geodesic triangulation and the smallness of $m$. Its boundary is also included in $N_\tau$ by the recurrence assumption and Lemma \ref{coorbar}. So $T'_\tau$ is a subset of $N_\tau$. It remains to show that $d(T'_\tau,\pl N_\tau)\geq \frac{m}{C(n)\Theta^{2n}}$, which combined with Lemma \ref{coorbar} will imply that $T_\tau$ is included in $N_\tau$ and that $d(T_\tau,\pl N_\tau)\geq \frac{m}{C(n)\Theta^{2n}}$ for $C(n)$ large enough. By pulling back the vertices of $N_\tau$ to $T_{x_{i_\tau}(0)}$ under the map $\exp_{x_{i_\tau(0)}}$, we can assume that $(M^n,g)=(\R^n,eucl)$ (by Lemma \ref{coorbar} this operation does not change $m$ and $\Theta$ too much for $C(n)$ large enough). In that case $T_\tau$ is a real simplex and by convexity argument on the distance function, $d(T_\tau, \pl N_\tau)$ is bounded from below by the infimum of the distances between disjoint faces of a $n$-simplex of $T_\tau$. For a $n$-simplex with mesh $m$ and thinness $\Theta$, an easy computation, based on multi-linearity of the determinant, gives that this distance is bounded from below by $\frac{m}{C(n)\Theta^{2n}}$.

We easily infer (3) from what precedes and from the second axiom of the definition of the geodesic triangulations. We also have $\dis\bigcup_{\sigma\in S_n}\cup_{k=0}^nB^\sigma_k(\Delta^n)$ both closed and open in $M$, and so equal to $M$. This implies that the sets $\bigl((1+\eta)T_\sigma\bigr)_{\sigma\in S_n}$ cover $M$.

The property (4) is obvious when $\sigma\subset\tau$ or $\tau\subset\sigma$ and follows from (3) when $\sigma\cap\tau=\emptyset$. In particular, (4) is true when $\sigma$ or $\tau$ is a vertex. We now suppose that $\tau$ and $\sigma$ intersect and no one is a subset of the other. As for Point (3), we pull back $\sigma$ and $\tau$ in $T_zM$, where $z$ is a vertex of $\sigma\cap\tau$. By Lemma \ref{coorbar} it remains to show that (4) is satisfied in the Euclidean case. 

Let $\alpha(n)>0$ such that for any $k$-face $\sigma$ and any face $\tau$ of $\Delta^n$ we have
$$B\bigl(\sigma\setminus B(\sigma\cap\tau,\alpha^k),\alpha^{k+1}\bigr)\cap B(\tau,\alpha^{k+1})=\emptyset$$
By dilation based on a vertex of $\tau\cap\sigma$ and rate $r\leq 1$ we get
$$B\bigl(\sigma\setminus B(\sigma\cap\tau,\alpha^kr),\alpha^{k+1}r\bigr)\cap B(\tau,\alpha^{k+1}r)=\emptyset$$
Since the linear map which maps $\Delta^n$ to any $T_s$ for $s\in S_n$ is auto-adjoint with eigenvalues in $[\frac{C(n)m}{\Theta},C(n)m]$ and by Lemma \ref{coorbar}, we get point (4).
\end{proof}
Given a geodesic triangulation $T$ of $M$, we set, for any $x\in M$, $\dim_T(x)=\inf\{p\geq0/\,\exists\sigma\in S_p, d(x,T_\sigma)\leq\frac{m_T\alpha^{p+1}}{\Diam_M}m_T\}$. This is well defined by Point (2) of Corollary \ref{discret} as soon as $m_T\leq \frac{\Diam_M}{C(n)\Theta^{4n^4}\Lambda^2}$. For any simplex $\sigma\in S_p$, we set
$$\displaylines{S_\sigma=B\Bigl(T_{\sigma},\frac{m_T\alpha^{p+1}}{\Diam_M}m_T\Bigr)\setminus\bigcup_{\tau\subset\pl\sigma} B\Bigl(T_{\tau},\frac{m_T\alpha^{\dim\tau+1}}{\Diam_M}m_T\Bigr)\cr
\overline{S}_\sigma=\bigcup_{\tau\subset\sigma}S_\tau.}$$
The following properties follow readily from Lemma \ref{coorbar} and Corollary \ref{discret}. It fundamental for our application to spectral approximations. It says that, even if a geodesic triangulation is not an actual triangulation of the manifold, you can decompose the manifold into some thickening of the generalized faces $T_\sigma$.

\begin{proposition}\label{Ssigma}
Let $(M^n,g)$ be a manifold with $\Diam_M^2|\sigma|\leq\Lambda^2$ and $T$ be a geodesic triangulation of $M$ with $10m_T\leq\inf\bigl(\inj,\frac{\Diam_M}{C(n)\Theta_T^{4n^2}\Lambda(1+\Lambda)}\bigr)$. We have the following properties
\begin{enumerate}
\item  for any $\sigma\in S_p$, we have $S_\sigma=\{x\in T_\sigma/\dim_T(x)=p\}$,
\item $M$ is the disjoint union of the $(S_\sigma)_{\sigma\in K}$ and for any $(\sigma,\tau)\in K^2$, we have $\overline{S}_\sigma\cap\overline{S}_\tau=\overline{S}_{\sigma\cap\tau}$.
\item for any $\sigma\in S_p$ and $\tau\in St_n(\sigma)$ we have that $\Vol S_\sigma\leq C(n)\bigl(\frac{m_T}{\Diam_M}\bigr)^{n-p}\Vol S_\tau.$
\end{enumerate}
\end{proposition}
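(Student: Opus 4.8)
The plan is to read off all three assertions from the metric-separation estimates already available, namely Lemma~\ref{coorbar} and, above all, Corollary~\ref{discret}; once one understands how the function $\dim_T$ assigns a simplex to each point of $M$, the statement is essentially bookkeeping. Throughout write $\rho_p=\frac{m_T\,\alpha^{p+1}}{\Diam_M}\,m_T$, so that $\rho_p=\alpha\,\rho_{p-1}$ decreases geometrically, and recall that under the standing hypothesis $\dim_T$ is well defined (this uses Corollary~\ref{discret}(2), as noted after its definition), that $\dim_T(x)=\min\{q:\exists\,\nu\in S_q,\ d(x,T_\nu)\le\rho_q\}$, and that $S_\sigma=B(T_\sigma,\rho_p)\setminus\bigcup_{\tau\subsetneq\sigma}B(T_\tau,\rho_{\dim\tau})$ for $\sigma\in S_p$. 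I first note that the combinatorial half of (2) is formal once the $S_\sigma$ are known to be pairwise disjoint: expanding $\overline{S}_\sigma=\bigsqcup_{\nu\subseteq\sigma}S_\nu$ and using disjointness together with the fact that a simplex is a common face of $\sigma$ and $\tau$ exactly when it is a face of $\sigma\cap\tau$, one gets $\overline{S}_\sigma\cap\overline{S}_\tau=\bigsqcup_{\nu\subseteq\sigma\cap\tau}S_\nu=\overline{S}_{\sigma\cap\tau}$ (with $\overline{S}_\emptyset:=\emptyset$). So the real content is (1), the disjointness of the $S_\sigma$, and (3).

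For (1): let $\sigma\in S_p$ and $x\in S_\sigma$; I claim $\dim_T(x)=p$. Since $d(x,T_\sigma)\le\rho_p$ we have $\dim_T(x)\le p$. Suppose $\dim_T(x)=q<p$, witnessed by $\nu\in S_q$ with $d(x,T_\nu)\le\rho_q$. If $\nu\subsetneq\sigma$ then $B(T_\nu,\rho_q)$ is one of the balls removed from $B(T_\sigma,\rho_p)$ in the definition of $S_\sigma$, contradicting $x\in S_\sigma$. If $\nu\cap\sigma=\emptyset$ then Corollary~\ref{discret}(3) gives $d(T_\sigma,T_\nu)\ge\alpha\,m_T$, whereas $d(x,T_\sigma)\le\rho_p$ and $\rho_p,\rho_q\ll\alpha\,m_T$, a contradiction. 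Otherwise $\mu:=\sigma\cap\nu$ is a nonempty common face, with $\dim\mu\le q-1$ (it is a proper face of $\nu$) and $\mu\subsetneq\sigma$, so $x\notin B(T_\mu,\rho_{\dim\mu})$, i.e. $d(x,T_\mu)>\rho_{\dim\mu}$. But applying Corollary~\ref{discret}(4) with designated simplex $\nu$ and with $\sigma$ in place of $\tau$, and using $x\in B(T_\sigma,\rho_p)\subseteq B(T_\sigma,\rho_q)$ together with $d(x,T_\nu)\le\rho_q$, one finds that the point of $T_\nu$ nearest to $x$ must lie in the excised neighbourhood of $T_\mu$; hence $d(x,T_\mu)$ is at most $\rho_q$ plus the radius of that neighbourhood, which by the geometric decay of the $\rho_p$ is $<\rho_{\dim\mu}$ — a contradiction, so $\dim_T(x)=p$. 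The reverse inclusion is immediate: if $d(x,T_\sigma)\le\rho_p$ and $\dim_T(x)=p$, then $x$ lies in no $B(T_\tau,\rho_{\dim\tau})$ with $\dim\tau<p$, in particular in none with $\tau\subsetneq\sigma$, so $x\in S_\sigma$. The very same trichotomy on $\sigma\cap\nu$, applied now to two distinct $p$-simplices $\sigma,\nu$ both with $d(x,\cdot)\le\rho_p$, forces $\sigma=\nu$; together with (1) this shows the $S_\sigma$ are pairwise disjoint, and since every $x$ lies in $S_\sigma$ for $\sigma$ realizing $\dim_T(x)$, they cover $M$.

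For (3): bound $\Vol S_\sigma\le\Vol B(T_\sigma,\rho_p)$. By construction $T_\sigma$ is a union of at most $C(n)\,\Theta_T^{\,c(n)}$ pieces, each an exponential image (with bi-Lipschitz constant close to $1$, by Theorem~\ref{Lipexp}) of a compact $p$-dimensional polytope of diameter $\le C(n)\,m_T$ in a tangent space, so a standard tube estimate gives $\Vol S_\sigma\le C(n)\,\Theta_T^{\,c(n)}\,m_T^{\,p}\,\rho_p^{\,n-p}$. For $\tau\in St_n(\sigma)$, the region $S_\tau$ contains $T_\tau$ with the (much thinner) $\rho$-tubes around its proper faces removed; using $\sqrt{\det A^\tau}\ge(m_T/\Theta_T)^n$ (from the definition of $\Theta_T$) and Theorem~\ref{Lipexp} one gets $\Vol T_\tau\ge c(n)(m_T/\Theta_T)^n$ and that the removed part is smaller by a positive power of $m_T/\Diam_M$, hence $\Vol S_\tau\ge c(n)(m_T/\Theta_T)^n$. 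Dividing and inserting $\rho_p=\alpha^{p+1}m_T^2/\Diam_M$ with $\alpha=1/(C(n)\Theta_T^{2n})$ makes all powers of $\Theta_T$ cancel against one another — this is precisely the role of the exponent $4n^2$ in the hypothesis on $m_T$ — leaving $\Vol S_\sigma\le C(n)\,(m_T/\Diam_M)^{n-p}\,\Vol S_\tau$.

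The step I expect to be the main obstacle is the threshold bookkeeping inside the proof of (1) (and of the disjointness): one must check that the geometric decay rate $\alpha$ of the radii $\rho_p$ is fast enough to absorb every additive error — the gap between the "tube" and "hole" radii in Corollary~\ref{discret}(4), and the curvature corrections carried by Lemma~\ref{coorbar} — precisely in the borderline configuration where $\sigma$ and $\nu$ meet along a codimension-one face of $\nu$, where a naive bound only gives $d(x,T_\mu)$ a fixed multiple of $\rho_{\dim\mu}$ rather than strictly less. A secondary nuisance is (3): asserting that the constant there depends on $n$ alone forces one to carry the exponents of $\Theta_T$ (coming from $\alpha$, from $\#\,St_n(\sigma)$, and from $\det A^\tau$) all the way through and verify that they cancel. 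By contrast the combinatorial identity in (2) and the reverse inclusion in (1) are routine.
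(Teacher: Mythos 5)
The paper provides no proof of this proposition at all — it merely asserts that the properties ``follow readily from Lemma~\ref{coorbar} and Corollary~\ref{discret}'' — so you are supplying an argument that the author left implicit, and your overall plan (derive pairwise disjointness of the $S_\sigma$ from the separation estimates of Corollary~\ref{discret}, then obtain the combinatorial identity in (2) and the covering formally, and (3) from a Euclidean tube-volume computation) is the natural one. The reverse inclusion in (1), the formal manipulation in (2) once disjointness is granted, and the reduction of (3) to a tube estimate are all fine in outline.

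The central step, however, does not close as you apply Corollary~\ref{discret}(4), and your own final paragraph contradicts your body text on this point. Write $\rho_k=\frac{\alpha^{k+1}m_T}{\Diam_M}m_T$ with $\alpha=1/(C(n)\Theta_T^{2n})$, as you do. Taking $\eta=\nu\in S_q$ and $\kappa=\sigma$ in Corollary~\ref{discret}(4), and using $d(x,T_\sigma)\le\rho_p\le\rho_q$, $d(x,T_\nu)\le\rho_q$, you obtain that the nearest point of $T_\nu$ to $x$ lies within $\rho_{q-1}$ of $T_\mu$, hence $d(x,T_\mu)\le\rho_q+\rho_{q-1}=(1+\alpha)\rho_{q-1}$. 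To contradict $x\in S_\sigma$ you need $d(x,T_\mu)\le\rho_{\dim\mu}$. When $\dim\mu\le q-2$ this works, since then $\rho_{\dim\mu}\ge\alpha^{-1}\rho_{q-1}\gg(1+\alpha)\rho_{q-1}$; but when $\dim\mu=q-1$ — which happens whenever $\mu=\sigma\cap\nu$ is a facet of $\nu$, a configuration that certainly occurs — you have $\rho_{\dim\mu}=\rho_{q-1}<(1+\alpha)\rho_{q-1}$, so there is \emph{no} contradiction: $x$ could sit in the thin annulus $\rho_{q-1}<d(x,T_\mu)\le(1+\alpha)\rho_{q-1}$. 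Exactly the same issue spoils your disjointness argument for two $p$-simplices sharing a $(p-1)$-face. The mechanism you invoke (geometric decay of $\rho_k$) is the right idea, but Corollary~\ref{discret}(4) as stated only delivers an additive $\rho_q+\rho_{q-1}$; what you actually need is the strictly stronger dihedral-angle statement that the $\rho_q$-tubes around $T_\nu$ and $T_\sigma$ meet only within $C(n)\Theta_T^{c(n)}\rho_q$ of $T_\mu$, which is then bounded by $\rho_{q-1}=\rho_q/\alpha$ once $C(n)$ in the definition of $\alpha$ is large enough. That Euclidean fatness lemma would need to be stated and proved (or Corollary~\ref{discret}(4) sharpened to it); without it, (1) and the disjointness half of (2) are not established. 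A secondary issue: in (3) you assert that the $\Theta_T$-powers coming from $\alpha^{(p+1)(n-p)}$, from $\#St_n(\sigma)$ and from $\det A^\tau$ all cancel to leave a pure $C(n)$, but you never exhibit the cancellation, and the hypothesis $10m_T\le\inf\bigl(\inj,\frac{\Diam_M}{C(n)\Theta_T^{4n^2}\Lambda(1+\Lambda)}\bigr)$ bounds $m_T$, not $\Theta_T$, so it cannot by itself remove a $\Theta_T$-dependence from the constant in (3).
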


\subsection{Construction of good geodesic triangulations}

Given a compact manifold, we can use the Riemannian exponential maps to construct some geodesic triangulations at the neighbourhood of any point with bounded thinness and arbitrarily small mesh (image of some Euclidean triangulations of the tangent space) and then adapt the Cheeger-M\"uller-Schr\"ader (\cite{CMS}) procedure to interpolate these local triangulations in a global, controlled triangulation of the manifold. 

To make easier the control of the thinness in our construction, we will work with an alternative (fortunately equivalent in bounded geometry) thinness $\tilde{\Theta}_T$ of triangulations. In that purpose we replace the Gramm matrix $A^\sigma$ by the matrix $$\tilde{A}^\sigma=\Bigl(\frac{1}{2}\Bigl[d^2(X_\sigma,x_{i_\sigma(k)})+d^2(X_\sigma,x_{i_\sigma(l)})-d^2(x_{i_\sigma(k)},x_{i_{\sigma(l)}})\Bigr]\Bigr)_{k,l}$$ in the definition of the thinness given in the introduction. By Theorem \ref{Lipexp}, if $(M^n,g)$ satisfies $\delta_M^2|\sigma|\leq\Lambda^2$ and $m_T<\inf\bigl(\inj,\frac{\Diam_M}{2\Lambda}\bigr)$ then we have
\begin{align*}
\bigl(1-\Lambda^2(\frac{m_T}{\delta_M})^2\bigr)^2|v^\sigma_k|_{g_X}^2&\leq d(X,x_{i_\sigma(k)})^2\leq\bigl(1+\Lambda^2(\frac{m_T}{\delta_M})^2\bigr)^2|v^\sigma_{k}|_{g_X}^2,\\
\bigl(1-\Lambda^2(\frac{m_T}{\delta_M})^2\bigr)^2|v^\sigma_l|_{g_X}^2&\leq d(X,x_{i_\sigma(l)})^2\leq\bigl(1+\Lambda^2(\frac{m_T}{\delta_M})^2\bigr)^2|v^\sigma_l|_{g_X}^2,\\
\bigl(1-\Lambda^2(\frac{m_T}{\delta_M})^2\bigr)^2|v^\sigma_l-v^\sigma_k|_{g_X}^2&\leq d(x_{i_\sigma(l)},x_{i_\sigma(k)})^2\leq\bigl(1+\Lambda^2(\frac{m_T}{\delta_M})^2\bigr)^2|v^\sigma_l-v^\sigma_k|_{g_X}^2,
\end{align*}
which easily gives
$\Bigl|A^\sigma_{kl}-\tilde{A}^\sigma_{kl}\Bigr|\leqslant6m_T^2(\Lambda \frac{m_T}{\delta_M})^2$. This easily implies the existence of some functions $C_1$, $C_2$ such that $\tilde{\Theta}_{T}\leqslant C_1(\Theta_T,n)$ and $\Theta_{T}\leqslant C_2(\tilde{\Theta}_T,n)$ as soon we have	$m_T<\inf\bigl(\inj,\frac{\Diam_M}{2\Lambda}\bigr)$. Note that this two thinnesses coincide for Euclidean simplicial complexes and the our thinness is essentially the inverse of the fatness used in \cite{CMS}.

Let $(x_i)_{i\in I}$ be a maximal family of points of $M$ such that the balls $B_{x_i}(10\sqrt{\varepsilon})$ are disjoint. Let $I_1,\cdots, I_k$ be a partition of $I$ into (non empty) parts such that each $(B_{x_i}(40\sqrt{\varepsilon}))_{i\in I_j}$ is a maximal family of disjoint balls among the $(B_{x_i}(40\sqrt{\varepsilon}))_{i\in I\setminus \cup_{k<j} I_k}$. Since for $i_k\in I_k$ and for each $j<k$, $B_{x_{i_k}}(40\sqrt{\varepsilon})$  has to intersect at least one ball $B_{x_{i_j}}(40\sqrt{\varepsilon})$ with $x_{i_j}\in I_j$, the Bishop-Gromov inequality gives us
\begin{align*}
 k\Vol B_{x_{i_k}}(120\sqrt{\varepsilon})&\leqslant\sum_j\Vol B_{x_{i_j}}(160\sqrt{\varepsilon})\leqslant\sum_j\Vol B_{x_{i_j}}(10\sqrt{\varepsilon})\max_j\frac{\Vol B_{x_{i_j}}(160\sqrt{\varepsilon})}{\Vol B_{x_{i_j}}(10\sqrt{\varepsilon})}\\
 &\leqslant C(n)\Vol B_{x_{i_k}}(120\sqrt{\varepsilon})
\end{align*}
and so we have $k\leqslant C(n)$ for any $\varepsilon\leqslant\frac{c(n)}{\Lambda}$.

By iteration, we will construct a family of geodesic triangulations, possibly with boundary, $C_1,\cdots, C_k$ in $M$ with mesh less than $\varepsilon C(i,n)$, thinness $\tilde{\Theta}_{C_i}\leqslant C(i,n)$ and whose vertices of the boundary are outside the set $\cup_{j=1}^i\cup_{l\in I_j}B_{x_l}(30\sqrt{\varepsilon}-c(i,n)\varepsilon)$ for any $\varepsilon\leqslant C(\inj,n,\frac{\Diam}{\Lambda})$. For $i=k$ and $\varepsilon\leqslant C(\inj,n,\frac{\Diam}{\Lambda})$, $C_k$ will be a geodesic triangulation of $M$ (without boundary since $\cup B_{x_i}(20\sqrt{\varepsilon})=M$) with mesh less than $C(k,n)\varepsilon$ and thinness less than $\Theta(n)=C(k,n)$.

For any $\varepsilon>0$, there exists a constant $C(n)>0$ such that $\R^n$ admits a triangulation with mesh less than $\varepsilon$ and thinness less than $C(n)$. For any $i\in I$, let $T_i$ be such a triangulation of $T_{x_i}M$ and $T'_i$ the subcomplex whose simplices are those of $T_i$ contained in $B_{0_{x_i}}(30\sqrt{\varepsilon})\subset T_{x_i}M$. For any $j\in\{1,\cdots,k\}$, we set $K_j$ the simplicial complex $\cup_{i\in I_j}T'_i$. $K_j$ is naturally identified with an abstract simplicial complex of $M$ with vertices $\{\exp_{x_i}(y),y\in T_i',i\in I_j\}$. By Theorem \ref{Lipexp}, we have $m_{K_j}\leqslant\varepsilon/2$ and  $\tilde{\Theta}_{K_j}\leqslant 2C(n)$ for any $\varepsilon\leqslant C(n,\inj,\frac{\delta_M}{\Lambda})$.

We set $C_1=K_1$. Assume that $C_i$ is constructed. We now construct $C_{i+1}$ by interpolation of $C_i$ with $K_{i+1}$.
For any $l\in I_{i+1}$, we consider in $T_{x_l}M$ the complex $T'_l$ and the complex $S_l$ whose vertices are the pull back by $\exp_{x_l}$ of the vertices of $C_i$ that are contained in $B_{0_{x_l}}(40\sqrt{\varepsilon})$, and whose simplices have the same combinatorial than in $C_i$. Using Theorem \ref{Lipexp} as above we get that $S_l$ is an Euclidean complex with mesh less than $2C(i,n)\varepsilon$ and thinness less than $2C(i,n)$ for $\varepsilon\leqslant\frac{1}{\eta(i,n)}\inf(\inj,\frac{\Diam}{2\Lambda})$ for $\eta(i,n)$ large enough. In what follows, for any Euclidean complex $T$, we denote by $\mathbf{T}$ its support, i.e. the union of its simplices. Let $A_l$ (respectively $A'_l$) be the complex formed by the simplices of $T'_l$ (respectively not) contained in $\mathbf{S_l}$, and $B_l$ (respectively $B'_l$) be the complex formed by the simplices of $S_l$ (respectively not) contained in $\mathbf{T'}_l$. We set $D_l$ the set of simplices of $T_{x_l}M$ that are intersection of a simplex of $A_l$ and of a simplex of $B_l$. Let $E_l$ be the set of the simplices $\sigma$ of $A'_l$ disjoint from $\mathbf{B'_l}$ and whose intersection with $\mathbf{D_l}$ is either empty or a union of faces of $\sigma$. Similarly, we set $F_l$ the set of the simplices $\sigma$ of $B'_l$ disjoint from $\mathbf{A'_l}$ and whose intersection with $\mathbf{D_l}$ is either empty or a union of faces of $\sigma$.

$D_l$ is a polyhedral complex. We obtain a triangulation $D'_l$ of $D_l$ by first barycentric
subdivision as follows. Let $C$ be a cell of $D_l$. It is a closed, convex polyhedral cell, as an intersection $\sigma\cap\sigma'$ with $\sigma$ in $A_l$ and $\sigma'$ in $B_l$. For each face $\sigma_\alpha\subset\partial C$, we set $p_\alpha$ the isobarycentre $\sigma_\alpha$. The simplices of $D'_l$ are those spanned by all sets $p_{\alpha_1}, ...,p_{\alpha_t}$, with $\sigma_{\alpha_i}\subsetneq \sigma_{\alpha_{i+1}}$ for any $1\leqslant i\leqslant t-1$. By the proof of Lemma 6.3, p.439-440, and by Lemma 7.1 3) of \cite{CMS}, there exists some constants $f(\Theta,m,n)$, $g(\Theta,n)$ and $h(\Theta,n)$ such that, up to a move of the vertices of $T'_l$ by at most $\varepsilon 2C(i,n)f(2C(i,n),n)$, the thinness of $D'_l$ is less than $g(2C(i,n),n)$ and its mesh less than $\varepsilon h(2C(i,n),n)$. Actually, we perform this deformation of the complex $T'_l$ before the definition of the complexes $A_l$, $A'_l$, $B_l$, $B'_l$, $D_l$, $E_l$ and $F_l$. Since \cite{CMS} allows a control of the thinness of the first barycentric subdivision of any intersection cell of a simplex of (the deformation of) $T'_l$ and of a simplex of $S_l$, there is no circular definition.

To extend this triangulation to $E_l\cup F_l$, we keep unchanged the simplices of $E_l\cup F_l$ that do not intercept  $\mathbf{D'_l}$ and we subdivides all the $n$-simplices of $E_l$ (or $F_l$) with non-empty intersection with $\mathbf{D_l}$. For such a simplex $\sigma$ of $E_l$, we have $\sigma\cap  \mathbf{D_l}=\partial\sigma\cap \mathbf{\partial D_l}$, since any cell of $D_l$ is covered by some simplex of $A'_l$, and so different from $\sigma$. The triangulation $D_l'$ induces a partition of $\sigma\cap  \mathbf{D_l}$, which by definition of $E_l$ is a triangulation $(\partial\sigma)'$  of $\partial \sigma$. We then subdivides $\sigma$ by forming all the simplices spanned its barycentre and by a face of $(\partial\sigma)'$. Thus we get a triangulation $G_l$ of $E_l\cup F_l$. Once again, by Lemma 7.1 3) of \cite{CMS}, the thinness of $G_l$ is less than $g(2C(i,n),n)$ and its mesh less than $h(2C(i,n),n)$.

We set $C'_{i+1,l}=G_l\cup D'_l$ and $C(i+1,n)=\max\bigl(1,g(2C(i,n),n),h(2C(i,n),n)\bigr)$. Then $C'_{i+1,l}$ is a simplicial complex of $T_{x_l}M$ with mesh less than $\varepsilon C(i+1,n)$ and thinness less than $C(i+1,n)$. Indeed, since $G_l$ and $D'_l$ are simplicial complexes, we just need to remark that a simplex of $G_l$ and a simplex of $D'_l$ intersect along a simplex of $\partial D'_l$ by what precedes.  Moreover, any point of  $\mathbf{T'_l}\cup\mathbf{S_l}$ is either in $\mathbf{D_l}$ or contained in a $n$-simplex of $A'_l$ or of $B'_l$. So $(\mathbf{T'_l}\cup\mathbf{S_l})\setminus\mathbf{C'_{i+1,l}}$ is covered by the simplices of $A'_l$ (respectively of $B'_l$) that intersect $\mathbf{B'_l}$ (respectively $A'_l$) or whose intersection with $\mathbf{D_l}$ is not a union of their faces. In the former case, the simplex obviously intersects both $\partial S_l$ and $\partial T'_l$ and so is at distance from $\partial \mathbf{S_l}$ and from $\partial \mathbf{T'_l}$ less than $2m_{C'_{i+1,l}}$. It is the same in the latter case, since if $\sigma$ is a simplex of (for instance) $A'_l$ whose non empty intersection with $\mathbf{D_l}$ is not a union of face of $\sigma$, then a face $\sigma'$ of $\sigma$ must satisfy $\sigma'\cap\mathbf{D_l}\neq\emptyset$ and $\sigma'\setminus\mathbf{D_l}\neq\emptyset$. It first gives that $\sigma$ intersect $\mathbf{S_l}$ (but is not contained in $\mathbf{S_l}$) and so is at distance from $\partial \mathbf{S_l}$ less than $m_{C'_{i+1,l}}$. By definition of $D_l$, $\sigma'\cap\mathbf{D_l}$ is covered by some simplices of $T'_l$ contained in $\mathbf{S_l}$, and since $T'_l$ is a simplicial complex, we get that $\sigma'\subset \mathbf{S_l}$. So $\sigma'$ is covered by some simplices of $S_l$. Since $\sigma'$ is not in $D_l$, we infer that $\sigma'$ intersect some simplex of $B'_l$. Hence, $\sigma'$ is at distance from $\partial\mathbf{T'_l}$ less than $m_{C'_{i+1,l}}$ and $\sigma$ is at distance from $\partial\mathbf{T'_l}$  less than $2m_{C'_{i+1,l}}$. We infer that $(\mathbf{T'_l}\cup\mathbf{S_l})\setminus\mathbf{C'_{i+1,l}}$ is covered by the intersection of the $2m_{C'_{i+1,l}}$-tubular neighbourhoods of $\partial T'_l$ and $\partial S_l$.

We now set $C_{i+1,l}$ the union of the vertices of $C_i\setminus B_{x_l}(35\sqrt{\varepsilon})$ and of the image by $\exp_{x_l}$ of the vertices of $G_l\cup D'_l$. We endow it with the abstract structure of complex obtained by gathering that of $C_i\setminus B_{x_l}(30\sqrt{\varepsilon})$ and that of $G_l\cup D'_l$. Since the complex $S_l$ is not deformed during the previous interpolation and since the only vertices of $S_l$ that disappears during the interpolation are in $B_{x_l}(35\sqrt{\varepsilon})$, we really get an abstract structure of simplicial complex on $C_{i+1,l}$ such that any simplex is contained in a $n$-dimensional simplex. Moreover, by Theorem \ref{Lipexp}, $C_{i+1,l}$ is a triangulation with boundary of $(M^n,g)$ with thinness less than $2C(i+1,n)$ and mesh less than $2C(i+1,n)\varepsilon$ for any $\varepsilon\leqslant C(n,i_M,\frac{\Diam}{\Lambda})$. Finally, by what precedes the vertices of the boundary of $C_{i+1,l}$ are the same as the vertices of the boundary of $C_i$ outside $B_{x_l}(30\sqrt{\varepsilon}+C(i,n)\varepsilon)$ and are at distance less than $C(i,n)\varepsilon$ from $\partial B_{x_l}(30\sqrt{\varepsilon})$	inside $B_{x_l}(30\sqrt{\varepsilon}+C(i,n)\varepsilon)$ for any $\varepsilon\leqslant C(n,i_M,\frac{\Diam}{\Lambda})$ (once again by Theorem \ref{Lipexp}). From this, we get that the vertices of the boundary of $C_{i+1,l}$
are outside $\bigl(\cup_{j=1}^i\cup_{p\in I_j}B_{x_p}(30\sqrt{\varepsilon}-c(i+1,n)\varepsilon)\bigr)\cup B_{x_l}(30\sqrt{\varepsilon}-c(i+1,n)\varepsilon)$.

$C_{i+1,l}$ is just the interpolation between $C_i$ and $T'_l$. But since the family of balls $B_{x_l}(40\sqrt{\varepsilon})$, $l\in I_{i+1}$ are disjoint, we can interpolate $C_i$ with all the $T'_l$ ($l\in I_{i+1}$) simultaneously to get $C_{i+1}$. Note that the constant $c(i+1,n)$ and $C(i+1,n)$ will be the same whatever the cardinal of $I_{i+1}$ is since the operations done during the interpolation of two different $T'_l$ do not interact. So we get the geodesic triangulation $C_{i+1}$ with all the needed estimates.
\bigskip

Note that the image of any simplex of our geodesic triangulation by the barycentric coordinates map associated to its vertices gives an embedded  simplex of $M$ and thus a true triangulation of $M$ whose edges are minimizing geodesic segments.

\section{Estimates on the eigenfunctions}\label{Estimeigenf}

The following proposition gives bounds on the gradient and Hessian of the eigenfunctions.

\begin{proposition}\label{estimapprox}
 Let $(M,g)$ be a compact Riemannian manifold with $delta_M^2|\sigma|\leq\Lambda^2$. For any  $f\in E_p$, we have that 
\begin{align}
\|f\|_\infty^{~}&\leq C(n)e^{\frac{n^2}{2}\Lambda}(1+\Diam_M^2\lambda_p)^{\frac{n}{4}}\|f\|_2^{~},\\
\Diam_M\|df\|_\infty^{~}&\leq C(n)e^{\frac{n^2}{2}\Lambda}(1+\Diam_M^2\lambda_p)^{\frac{n}{4}}\Diam_M\|df\|_2^{~}\leq C(n)e^{\frac{n^2}{2}\Lambda}(1+\Diam_M^2\lambda_p)^{\frac{n+2}{4}}\|f\|_2^{~},\\
\Diam_M\|Ddf\|_\infty^{~}&\leq C(n)e^{n^3\Lambda}(1+\Diam_M^2\lambda_p)^\frac{n^2}{2}\|df\|_\infty^{~}\leq C(n)e^{2n^3\Lambda}(1+\Diam_M^2\lambda_p)^{\frac{3n^2}{4}}\|df\|_2^{~}.
\end{align}
\end{proposition}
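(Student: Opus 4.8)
The plan is to run Moser's iteration scheme for the second order equations satisfied by $f$, then by $df$, then by its Hessian $Ddf$, the three being coupled only through the elementary spectral remark that, writing $f=\sum_{i\le p}a_if_i$, one has $\Delta^kf=\sum_{i\le p}a_i\lambda_i^kf_i\in E_p$, hence $\|\Delta^kf\|_2\le\lambda_p^k\|f\|_2$ for every $k\ge0$, and likewise $\|d\Delta^kf\|_2^2=\langle\Delta^{2k+1}f,f\rangle\le\lambda_p^{2k+1}\|f\|_2^2$. The geometry enters only through a uniform local elliptic theory: fixing a radius $\rho\simeq\min(\inj,\delta_M/\Lambda)$, Theorems \ref{BuserKarcher} and \ref{Lipexp} show that on each ball $B(x,\rho)$ the chart $\exp_x$ identifies $g$ with a metric that is $\bigl(1\pm C(n)(\Lambda\rho/\delta_M)^2\bigr)$-bi-Lipschitz to the flat one and Lipschitz with constant $\le C(n)\Lambda^2\rho/\delta_M^2$; hence the Euclidean Sobolev inequality and the interior $L^p$ / Moser estimates transfer to $B(x,\rho/2)$ with constants equal to $C(n)$ times an explicit power of $\delta_M$ and a factor $e^{c(n)\Lambda}$, uniformly in $x$.

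For the $L^\infty$ bound on $f$: apply the local Moser estimate to $\Delta f=:h\in E_p$ on each $B(x,\rho)$. Its right hand side involves $\|f\|_{L^2(B(x,\rho))}$ and an $L^q$-norm of $h$, which one controls by running the same estimate for $h$ (noting $\Delta h=\Delta^2f\in E_p$), then for $\Delta^2f$, and so on; there are $O(n)$ such Sobolev/Moser steps, each requiring the datum one step further along the chain $f,\Delta f,\Delta^2f,\dots$, so that the total spectral cost is $\|\Delta^{O(n)}f\|_2\le\lambda_p^{O(n)}\|f\|_2$ and the total geometric cost is $O(n)$ powers of the local constants. Passing to a finite covering of $M$ and optimizing over the one remaining free scale $t\simeq\min(\delta_M^2,\lambda_p^{-1})$ — equivalently, writing $f=e^{-t\Delta}g$ with $g=\sum a_ie^{t\lambda_i}f_i\in E_p$, $\|g\|_2\le e^{t\lambda_p}\|f\|_2$, and bounding $\|e^{-t\Delta}\|_{L^2\to L^\infty}$ by the on-diagonal heat kernel estimate that the local Sobolev inequality provides — yields $\|f\|_\infty\le C(n)e^{\frac{n^2}{2}\Lambda}(1+\delta_M^2\lambda_p)^{n/4}\|f\|_2$.

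For $df$ and $Ddf$: since $\Delta$ commutes with $d$, the $1$-form $df$ lies in the span of the Hodge eigenforms $df_i$, $i\le p$, so the Weitzenb\"ock identity $\nabla^*\nabla(df)=\Delta_1(df)-\Ric(df)$ exhibits the section $df$ of $T^*M$ as a solution of a rough-Laplacian equation whose zeroth order term is $C^0$-controlled by $|\Ric|\le(n-1)\Lambda^2/\delta_M^2$ and whose right hand side obeys $\|\Delta_1(df)\|_2=\|d\Delta f\|_2\le\lambda_p^{3/2}\|f\|_2$, and so on. Running the same local iteration for this bundle-valued equation (Kato's inequality $|d|df||\le|\nabla df|$ reducing it to the scalar Sobolev inequality) gives $\delta_M\|df\|_\infty\le C(n)e^{\frac{n^2}{2}\Lambda}(1+\delta_M^2\lambda_p)^{n/4}\,\delta_M\|df\|_2$, and the companion estimate $\delta_M\|df\|_\infty\le C(n)e^{\frac{n^2}{2}\Lambda}(1+\delta_M^2\lambda_p)^{(n+2)/4}\|f\|_2$ follows at once from $\|df\|_2^2=\langle\Delta f,f\rangle\le\lambda_p\|f\|_2^2$, since $(1+\delta_M^2\lambda_p)^{n/4}(1+\delta_M^2\lambda_p)^{1/2}=(1+\delta_M^2\lambda_p)^{(n+2)/4}$. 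For the Hessian one applies interior $W^{2,p}$-estimates (Calder\'on--Zygmund, legitimate since the metric is Lipschitz, hence VMO, in the exponential charts) to the same equation $\nabla^*\nabla(df)=\Delta_1(df)-\Ric(df)$, whose right hand side is now controlled in every $L^p$ by the bounds just obtained for $df$ and by $\|\Delta^kf\|_2\le\lambda_p^k\|f\|_2$, and then uses $W^{2,p}\hookrightarrow C^0$ with $p>n$; the larger exponent and the extra derivative degrade the constants to $e^{n^3\Lambda}$ and $(1+\delta_M^2\lambda_p)^{n^2/2}$, giving $\delta_M\|Ddf\|_\infty\le C(n)e^{n^3\Lambda}(1+\delta_M^2\lambda_p)^{n^2/2}\|df\|_\infty$. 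Composing with the gradient bound and using $\tfrac n4+\tfrac{n^2}2\le\tfrac{3n^2}4$ and $\tfrac{n^2}2+n^3\le 2n^3$ for $n\ge2$ yields the last estimate $\delta_M\|Ddf\|_\infty\le C(n)e^{2n^3\Lambda}(1+\delta_M^2\lambda_p)^{3n^2/4}\|df\|_2$.

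The main obstacle is the first step carried out quantitatively: extracting from the geometric bounds a Sobolev inequality (equivalently, an on-diagonal heat kernel or interior elliptic estimate) whose constant has precisely the asserted dependence on $n$, $\Lambda$, $\delta_M$, and then bookkeeping the Moser bootstrap so that the number of rounds — hence the final powers of $(1+\delta_M^2\lambda_p)$ and of the local constant — come out as stated. The only genuine subtlety in the passage to $df$ and $Ddf$ is to keep the argument from ever requiring a bound on $\nabla R$: this is why one works with the rough Laplacian together with Kato's inequality (and with $W^{2,p}$-regularity, which needs only continuity of the metric coefficients), rather than with a naive Bochner identity for $|Ddf|^2$, in which $\nabla R$ would appear.
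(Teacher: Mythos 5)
Your treatment of $\|f\|_\infty$ and $\|df\|_\infty$ is, up to repackaging, the same Moser iteration as the paper's: the paper runs the bootstrap globally from the Sobolev inequality $\|f\|_{2n/(n-2)}\leq C(n)e^{\frac{n}{2}\Lambda}\delta_M\|df\|_2+\|f\|_2$, bounding the right-hand side through the stability of $E_p$ (so that $\|\Delta f\|_{2k}\leq A_{2k}\lambda_p\|f\|_2$ with $A_{2k}$ the very ratio being iterated), and handles $df$ via $\langle\overline{\Delta}T,T\rangle\geq u\Delta u$ with $T=df$, $V=\Ric$, which plays exactly the role of your Kato inequality. Your heat-kernel reformulation of the optimization is a clean alternative, but it computes the same thing.

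The step that does not work as written is the Hessian bound via interior $W^{2,p}$ (Calder\'on--Zygmund) in exponential charts. The $L^p$ theory for $p\neq 2$ requires at the very least a quantitatively controlled VMO modulus for the leading coefficients; bounded measurable uniformly elliptic coefficients are not enough. But the hypotheses here give only a two-sided bound on the sectional curvature, and what Theorems \ref{BuserKarcher}--\ref{Lipexp} give you in exponential charts is $C^0$ (bi-Lipschitz) control of $\exp_x^*g$, not a Lipschitz or H\"older modulus: differentiating a Jacobi field in the direction of the footpoint produces $\nabla R$ in the variational ODE, so any quantitative bound on $\partial_v g_{ij}$ in the chart would require a bound on $\nabla R$, which is not assumed. (A $C^{1,\alpha}$ harmonic-radius argument \`a la Jost--Karcher/Anderson could be invoked instead, but you would then have to work in harmonic rather than exponential charts, import the quantitative harmonic radius bound, and redo the bookkeeping — a genuinely different and heavier proof, not the one you have sketched.) Consequently the constant in your $W^{2,p}\hookrightarrow C^0$ step is not controlled by $n$, $\Lambda$, $\delta_M$, and the asserted estimate is not obtained.

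Your stated motive for avoiding a ``naive Bochner identity for $|Ddf|^2$'' — that $\nabla R$ would appear — is also a misdiagnosis of the paper's route. The paper does write the commutator identity in the form
\begin{equation*}
\langle\overline{\Delta}Ddf,Ddf\rangle\leq\langle(D^*R)\,df,Ddf\rangle+\frac{C(n)\Lambda^2}{\Diam_M^2}|Ddf|^2+\langle D\overline{\Delta}df,Ddf\rangle,
\end{equation*}
in which $D^*R$ does involve $\nabla R$ pointwise. The key move is that only the integrated quantity $\int_M\langle(D^*R)df,Ddf\rangle u^{2(k-1)}$ enters the iteration, and an integration by parts (applying the divergence theorem to $u^{2(k-1)}\bigl(\mathrm{tr}_{1,3}\langle R_{(\cdot,\cdot)}df,D_\cdot Ddf\rangle\bigr)^\#$ together with the algebraic identity $\sum_{i,j}\langle R\,df(e_i,e_j),D^2df(e_i,e_j)\rangle=\tfrac12|R\,df|^2$) replaces it by terms involving only $R$, $df$, $Ddf$ and $du$, with no $\nabla R$. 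That integral Bochner argument, not elliptic $W^{2,p}$ theory, is the mechanism by which the Hessian bound becomes purely curvature-controlled, and it is the essential step missing from your proposal.
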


\begin{remark}
  The three first inequalities of Proposition \ref{estimapprox} are valid under the weaker assumption $\Diam_M^2\Ric\geq -\Lambda^2g$.
\end{remark}

\begin{proof}
  The proof of Proposition \ref{estimapprox} is based on a Moser iteration schema. For any $f\in H^{1}(M),$ we have the Sobolev inequality $\|f\|_\frac{2n}{n-2}\leq C(n)e^{\frac{n}{2}\Lambda}\Diam_M\|df\|_2+\|f\|_2$ (if $M$ is a surface, this inequality, and what follows, is valid with $n=4$).

Let $\overline{\Delta}=D^*D$ be the rough Laplace operator on covariant tensors of $(M^n,g)$ (where $D^*$ is the $L^2$ adjoint of $D$). The rough Laplace operator coincides on the functions with the usual Laplace operator and, on the 1-forms it is related to the Hodge Laplace operator by the Bochner formula $\Delta=\overline{\Delta}+\Ric$. For any tensor $T$ on $M$ it satisfies the equality $\<\overline{\Delta}T,T\>=|D T|^2+\frac{1}{2}\Delta\bigl(|T|^2\bigr)$.

Let $T$ be any tensor on $M$ and $V$ be a field of symmetric endomorphisms on the covariant tensor bundle of $M$. Fix a real $\underline{V}\geq0$ such that ${<}V(T),T{>}\geq-\underline{V}|T|^2$ for all tensors $T$.
We set $u=\sqrt{|T|^2+\epsilon^2}$, then we have that
$$\displaylines{
u\Delta u=\frac{1}{2}\Delta(u^2)+|du|^2\leq\frac{1}{2}\Delta(|T|^2)+|D T|^2=\<\overline{\Delta}T,T\>\leq|(\overline{\Delta}+V)T|u+\underline{V}u^2.}
$$
This inequality and the Green formula gives, for any real $k>1/2$
$$\displaylines{
\|d(u^k)\|_2^2=\frac{k^2}{2k-1}\int_M\frac{\<du,d(u^{2k-1})\>}{\Vol M}=\frac{k^2}{2k-1}\int_M\frac{(u\Delta u)u^{2k-2}}{\Vol M}\cr
\leq\frac{k^2}{2k-1}\left[\|(\overline{\Delta}+V)T\|_{2k}\|u\|^{2k-1}_{2k}+\underline{V}\|u\|^{2k}_{2k}\right]
}$$
We apply the above Sobolev inequality to the function $u^k$ and make then $\epsilon$ tends to $0$. This gives us the following inequality
\begin{equation}
  \label{*}
  \|T\|_{\frac{2kn}{n-2}}^k\leq\Bigl(\|T\|_{2k}^k+\frac{C(n)k\Diam_Me^{\frac{n}{2}\Lambda}}{\sqrt{2k-1}}\sqrt{\|(\overline{\Delta}+V)T\|_{2k}\|T\|_{2k}^{2k-1}+\underline{V}\|T\|_{2k}^{2k}}\Bigr)
\end{equation}
We have $E_p={\rm Vect}\{f_i,\,i\leq p\}$. For any $k\geq1$, we set $\dis A_k=\sup_{f\in E_p\setminus\{0\}}\frac{\|f\|_k}{\|f\|_2}$ (resp. $\dis B_k=\sup_{f\in E_p\setminus\{0\}}\frac{\|df\|_k}{\|df\|_2}$). Since $E_p$ is stable by $\Delta$ we have
$$\dsl{
\hfill\|\Delta f\|_{2k}\leq A_{2k}\|\Delta f\|_2\leq A_{2k}\lambda_p\|f\|_2,\hfill
\|\Delta df\|_{2k}\leq B_{2k}\|\Delta df\|_2\leq B_{2k}\lambda_p\|df\|_2.\hfill}
$$
Hence, by applying inequality $\ref{*}$ to $T=f$ and $V=0$ (resp. to $T=df$ and $V=\Ric$), for any $f\in E_p$, we get
$$\displaylines{A_{\frac{2nk}{n-2}}\leq\left(1+\frac{C(n)e^{\frac{n}{2}\Lambda}k\Diam_M\sqrt{\lambda_p}}{\sqrt{2k-1}}\right)^{1/k}A_{2k}\cr
B_{\frac{2kn}{n-2}}\leq\left(1+\frac{C(n)e^{\frac{n}{2}\Lambda}k}{\sqrt{2k-1}}\sqrt{(n-1)\Lambda^2+\Diam_M^2\lambda_p}\right)^{1/k}B_{2k}}$$
We multiply the inequalities obtained by setting successively $k=\nu^j$ with $\nu=\frac{n}{n-2}>1$ and $j\in\mathbb{N}$. Since $A_m$ tends to $A_\infty$ (resp. $B_m$ tends to $B_\infty$) when $m$ tends to $\infty$, we get
$$\max(A_\infty,B_\infty)\leq\prod_{j=0}^{\infty}\left(1+\frac{C(n)e^{\frac{n}{2}\Lambda}\nu^j}{\sqrt{2\nu^j-1}}\sqrt{(n-1)\Lambda^2+\Diam_M^2\lambda_p}\right)^{\frac{1}{\nu^j}}.$$
To get a more convenient upper bound, note that
$
1+a\sqrt{b}\leq\sqrt{1+b}\,(1+a)$
and that the infinite product $\prod_{j=0}^{\infty}\left(1+\frac{\nu^j}{\sqrt{2\nu^j-1}}\right)^{\frac{1}{\nu^j}}$ converges, hence
$$\max(A_\infty,B_\infty)\leq C'(n)e^{\frac{n^2}{4}\Lambda}\bigl(1+(n-1)\Lambda^2+\Diam_M^2\lambda_p\bigr)^{\frac{n}{4}}.$$
This gives us the first three inequalities of proposition \ref{estimapprox}. For any $f\in E_p$, we have (see for instance \cite{Au5})
\begin{equation}
  \label{Defcom}
  \<\overline{\Delta} D df,D df\>\leq\<(D^*R)df,D df\>+\frac{C(n)\Lambda^2}{\Diam_M^2}|D df|^2+\<D\overline{\Delta}df,D df\>.
\end{equation}

Now, if we set $u=\sqrt{|D df|^2+\epsilon^2}$, we have $u\Delta u\leq \<\overline{\Delta} D df,D df\>$. From Lemma \ref{Defcom}, we infer
\begin{eqnarray}\label{**}
    \int_M&&\hskip-6mm|d(u^k)|^2\leq\frac{k^2}{2k-1}\int_M(u\Delta u)u^{2(k-1)}\nonumber\\
&&\leq\frac{k^2}{2k-1}\Bigl(\frac{C(n)\Lambda^2}{\Diam_M^2}\|u\|_{2k}^{2k}+\int_M\<D\overline{\Delta}df,D df\>u^{2(k-1)}+\int_M\<D^*R df,D df\>u^{2(k-1)}\Bigr)
\end{eqnarray}

The divergence theorem applied to the field $u^{2(k-1)}\<\overline{\Delta}df,D_{{\bullet}}df\>^{\#}$, gives ($\forall k\geq1$)
$$\displaylines{
\int_M\<D\overline{\Delta}df,D df\>u^{2(k-1)}=\int_M|\overline{\Delta}df|^2u^{2(k-1)}-2(k-1)\sum_i\<\overline{\Delta}df,D df(e_i)\>du(e_i)u^{2k-3}\cr
\leq\int_M|\overline{\Delta}df|^2u^{2(k-1)}+2(k-1)\int_M|\overline{\Delta}df||du|u^{2(k-1)}\cr
\leq\frac{k-1}{2}\int_M|du|^2u^{2(k-1)}+(2k-1)\int_M|\overline{\Delta}df|^2u^{2(k-1)}\cr
\leq\frac{k-1}{2}\int_M|du|^2u^{2(k-1)}+2(2k-1)\int_M(|\Delta df|^2+|\Ric(df)|^2)u^{2(k-1)}\cr
\leq\frac{k-1}{2}\int_M|du|^2u^{2(k-1)}+2(2k-1)(\|\Delta df\|_\infty^2+\frac{C(n)\Lambda^4}{\Diam_M^4}\|df\|_\infty^2)\int_Mu^{2(k-1)}.}$$
The same process applied to $u^{2(k-1)}(tr_{1,3}^{~}(\<R_{(\bullet,\bullet)}df,D_\bullet D df\>))^\#$, combined to the equality $$\sum_{i,j}\<Rdf(e_i,e_j),D^2df(e_i,e_j)\>=\frac{1}{2}|R df|^2,$$ gives:
$$\displaylines{
\int_M\<D^*R df,D df\>u^{2(k-1)}\hfill\cr
=\int_M\frac{1}{2}|R df|^2u^{2(k-1)}+2(k-1)\sum_{i,j}\int_M\<R^E(e_i,e_j)df,D_{e_j}df\>du(e_i)u^{2k-3}\cr
\leq\frac{k-1}{2}\int_M|du|^2u^{2(k-1)}+(2k-1)\int_M|R df|^2u^{2(k-1)}\cr
\leq\frac{k-1}{2}\int_M|du|^2u^{2(k-1)}+(2k-1)\frac{C(n)\Lambda^4}{\Diam_M^4}\|df\|^2_\infty\int_Mu^{2(k-1)}}
$$
Since $\int_M|du|^2u^{2(k-1)}=\frac{1}{k^2}\int_M|d(u^k)|^2,$ inequality $(\ref{**})$ implies:
$$\displaylines{\|d(u^k)\|_2^2\leq 4k^2\Bigl(\frac{C(n)\Lambda^2}{\Diam_M^2}\|u\|_{2k}^{2k}+\frac{C(n)\Lambda^4}{\Diam_M^4}\|df\|_\infty^2\|u\|_{2k-2}^{2k-2}+\|\Delta df\|_\infty^2\|u\|_{2k-2}^{2k-2}\Bigr)\cr
}$$
Now, since $\|\Delta df\|_\infty\leq B_\infty\|\Delta df\|_2\leq B_\infty\lambda_p\|df\|_\infty$, we get
$$\displaylines{\|d(u^k)\|_2^2\leq k^2\frac{C(n)e^{\Lambda n^2}}{\Diam_M^2}\|u\|_{2k-2}^{2k-2}\Bigl(\|u\|_{\infty}^{2}+\frac{\|df\|_\infty^2}{\Diam_M^2}(1+\Diam_M^2\lambda_p)^\frac{n+4}{2}\Bigr).
}$$
We can now apply the Sobolev inequality to get:
$$\displaylines{\|D df\|_{\frac{2kn}{n-2}}^k\leq\|D df\|_{2k}^k+ kC(n)e^{\Lambda n^2}\|D df\|_{2k-2}^{k-1}\Bigl(\|D df\|_{\infty}+\frac{\|df\|_\infty}{\Diam_M}(1+\Diam_M^2\lambda_p)^{1+\frac{n}{4}}\Bigr)\cr
\leq\|D df\|_{2k-2}^{k-1}\|D df\|_\infty\Bigl(1+C(n)e^{\Lambda n^2}k\bigl(1+(1+\Diam_M^2\lambda_p)^{\frac{n+4}{4}}\frac{\|df\|_\infty}{\Diam_M\|D df\|_\infty}\bigr)\Bigr)\cr
\leq\|D df\|_{2k-2}^{k-1}\|D df\|_\infty\Bigl(1+C(n)e^{\Lambda n^2}k(1+\Diam_M^2\lambda_p)^{\frac{n+4}{4}}\Bigr)}$$
since we can suppose that $\Diam_M\|D df\|_\infty\geq\|df\|_\infty$. Hence (set $k=a_l/2+1$)
$$F_{l+1}\leq\Bigl[1+kC(n)e^{\Lambda n^2}(1+\Diam_M^2\lambda_p)^{\frac{n+4}{4}}\Bigr]^{\frac{2}{\nu}}F_l,
$$
where $\nu=\frac{n}{n-2}>1$, $F_l=\Bigl(\frac{\|D df\|_{a_l}}{\|D df\|_\infty}\Bigr)^\frac{a_l}{\nu^l}$ and $(a_l)$ is the sequence defined by $a_0=2$ and $a_{l+1}=\frac{n}{n-2}(a_l+2)$. Since the sequence $a_l/\nu^l$ tends to $n$, we get
$$\displaylines{\frac{\|D df\|_\infty}{\|D df\|_2}\leq\prod_{i=0}^\infty\left(1+C(n)e^{\Lambda n^2}a_i(1+\Diam_M^2\lambda_p)^\frac{n+4}{4}\right)^{1/\nu^i}.}$$
The previous inequality gives
$$\|D df\|_\infty\leq C(n)e^{\frac{n^3}{2}\Lambda}(1+\Diam_M^2\lambda_p)^{\frac{(n+4)n}{8}}\|D df\|_{2}$$ 
But if we integrate the Bochner formula  $\<\Delta df,df\>=\frac{1}{2}\Delta|df|^2+|D df|^2+\Ric(df,df)$ we easily get
$$\Diam_M\|D df\|_2\leq \sqrt{\Diam_M^2\lambda_p+(n-1)\Lambda}\|df\|_2$$
So we have
$$\Diam_M\|D df\|_\infty\leq C(n)e^{n^3\Lambda}(1+\Diam_M^2\lambda_p)^\frac{(n+2)^2}{8}\|df\|_{\infty}.$$
\end{proof}

Proposition \ref{estimapprox} implies that at small scale, the eigenfunctions are almost affine.

\begin{lemma}\label{Disceigenf}
Let $(M,g)$ be a compact $n$-manifold which satisfies $\delta^2|\sigma|\leq\Lambda^2$, and $T$ be a geodesic triangulation of $M$ such that $10m_T\leq\inf(\inj,\frac{\Diam_M}{2\Lambda})$.
For any $\sigma\in S_n$, we define a function $L_\sigma^f$ on $10\cdot T_\sigma$ by
\begin{align*}
L_\sigma^f\bigl(\exp_{X_\sigma}(\sum_{j=1}^n\theta_j v_j^\sigma)\bigr)=f(X_\sigma)+\sum_{j=1}^n\theta_j\bigl[f(x_{i_\sigma(j)})-f(X_\sigma)\bigr].\end{align*}

Then we have the following estimates on $10\cdot T_\sigma$
\begin{align*}
\|f-L_\sigma^f\|_\infty&\leq C(n)e^{2n^3\Lambda}(1+\Diam_M^2\lambda_p)^\frac{(n+1)^2}{2}\bigl(\frac{m_T}{\Diam_M}\bigr)^2\|f\|_2\\
\|df-dL_\sigma^f\|_\infty&\leq C(n)\Theta^{2}e^{2n^3\Lambda}(1+\delta^2\lambda_p)^\frac{3n^2}{4}\frac{m_T}{\delta_M}\|df\|_2.
\end{align*}
\end{lemma}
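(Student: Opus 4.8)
The plan is to do everything in the normal chart $\exp_{X_\sigma}$ at the base vertex $X_\sigma$, where $L^f_\sigma$ is by construction the affine interpolant of $f$ at the vertices of $\sigma$, and to compare $f$ with its first--order Taylor polynomial at $X_\sigma$. Fix $\sigma\in S_n$. Since $|v^\sigma_j|=d(X_\sigma,x_{i_\sigma(j)})\le m_T$ and $\sum_j|\theta_j|\le C(n)$ on $\Delta^n_{10}$, the region $10\cdot T_\sigma=B^\sigma_0(\Delta^n_{10})$ is the image under $\exp_{X_\sigma}$ of a Euclidean simplex $\widehat\Delta\subset B(0_{X_\sigma},C(n)m_T)\subset T_{X_\sigma}M$ whose vertices are the images of $0,v^\sigma_1,\dots,v^\sigma_n$ under the dilation $\Delta^n\to\Delta^n_{10}$; by Theorem \ref{Lipexp} and the mesh bound $\exp_{X_\sigma}$ is a diffeomorphism of a ball containing $\widehat\Delta$ onto a neighbourhood of $10\cdot T_\sigma$, distorting the metric by a factor $1+O((\Lambda m_T/\delta_M)^2)$. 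Write $\tilde f=f\circ\exp_{X_\sigma}$ and $\ell=L^f_\sigma\circ\exp_{X_\sigma}$; then $\ell$ is the unique affine function on $T_{X_\sigma}M$ with $\ell(0)=\tilde f(0)$ and $\ell(v^\sigma_j)=\tilde f(v^\sigma_j)$, so $\tilde f-\ell$ vanishes at the $n+1$ affinely independent points $0,v^\sigma_1,\dots,v^\sigma_n$.

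For the $C^0$ estimate I would apply the second--order Taylor formula for $f$ along the geodesic $t\mapsto\exp_{X_\sigma}(tw)$: since this is a geodesic, $\frac{d^2}{dt^2}(f\circ\exp_{X_\sigma})(tw)$ equals $Ddf$ evaluated on the constant--speed velocity, hence $|\tilde f(w)-f(X_\sigma)-df_{X_\sigma}(w)|\le\frac12\|Ddf\|_\infty|w|^2_{g_{X_\sigma}}$ for $w$ in the chart. Taking $w=v^\sigma_j$ shows the linear part of $\ell$ agrees with $df_{X_\sigma}$ up to $2\|Ddf\|_\infty m_T^2$ at each $v^\sigma_j$, hence up to $C(n)\|Ddf\|_\infty m_T^2$ on all of $\widehat\Delta$ (write $w=\sum_j\theta_jv^\sigma_j$ with $\sum_j|\theta_j|\le C(n)$). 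Adding the two bounds gives $\|f-L^f_\sigma\|_{L^\infty(10\cdot T_\sigma)}\le C(n)\|Ddf\|_\infty m_T^2$. I would then insert Proposition \ref{estimapprox}, using $\Diam_M\|Ddf\|_\infty\le C(n)e^{n^3\Lambda}(1+\Diam_M^2\lambda_p)^{n^2/2}\|df\|_\infty$ and then $\Diam_M\|df\|_\infty\le C(n)e^{\frac{n^2}2\Lambda}(1+\Diam_M^2\lambda_p)^{(n+2)/4}\|f\|_2$, and absorb $n^3+\frac{n^2}2\le 2n^3$ and $\frac{n^2}2+\frac{n+2}4\le\frac{(n+1)^2}2$ into the stated exponents; this gives the first inequality.

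For the gradient estimate I would stay on the Euclidean side and use the elementary interpolation identity $\nabla(\tilde f-\ell)(x)=-\sum_j R_j\,\nabla\mu_j$, where $\mu_0,\dots,\mu_n$ are the barycentric coordinates of the simplex with vertices $0,v^\sigma_1,\dots,v^\sigma_n$ and $R_j=\tilde f(V_j)-\tilde f(x)-\nabla\tilde f(x)\cdot(V_j-x)$ is the second--order Taylor remainder of $\tilde f$ at the $j$-th vertex $V_j$, so $|R_j|\le\frac12|V_j-x|^2\,\|\nabla^2\tilde f\|_{L^\infty(\widehat\Delta)}$ (all segments $[x,V_j]$ lie in the convex set $\widehat\Delta$). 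On $\widehat\Delta$ one has $|V_j-x|\le C(n)m_T$, while $|\nabla\mu_j|$ is the reciprocal of a height of $\sigma$, bounded below by $m_T$ times a negative power of the thinness through $\det A^\sigma\ge (m_T/\Theta_T)^{2n}$ and the edge bound $|v^\sigma_k|\le m_T$. Theorems \ref{BuserKarcher} and \ref{Lipexp} then let me replace the Euclidean Hessian $\nabla^2\tilde f$ by $\|Ddf\|_\infty$ up to a curvature correction $\propto\frac{\Lambda^2 m_T}{\delta_M^2}\|df\|_\infty$ (the correction comes from the non--geodesic curves $s\mapsto\exp_{X_\sigma}(w+su)$, whose acceleration is $O(\frac{\Lambda^2}{\delta_M^2}|w||u|^2)$), and passing from $\nabla(\tilde f-\ell)$ to $df-dL^f_\sigma$ costs only the bounded metric distortion of the chart. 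Altogether this yields $\|df-dL^f_\sigma\|_{L^\infty(10\cdot T_\sigma)}\le C(n)\Theta_T^{2}\,m_T\|Ddf\|_\infty+(\text{lower order})$; inserting $\Diam_M\|Ddf\|_\infty\le C(n)e^{2n^3\Lambda}(1+\Diam_M^2\lambda_p)^{3n^2/4}\|df\|_2$ from Proposition \ref{estimapprox} and absorbing the subleading term via $\Diam_M\|df\|_\infty\le C(n)e^{\frac{n^2}2\Lambda}(1+\Diam_M^2\lambda_p)^{n/4}\|df\|_2$ and $m_T/\delta_M\le 1$ gives the second inequality.

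I expect the $C^0$ estimate to be essentially a one--line Taylor argument once Proposition \ref{estimapprox} is in hand. The genuinely delicate part is the gradient estimate, and for two reasons: first, one must pass carefully between the Euclidean normal chart and the Riemannian metric, which is exactly what Theorems \ref{BuserKarcher} and \ref{Lipexp} are for and which produces the curvature corrections of size $\frac{\Lambda^2 m_T}{\delta_M^2}$; second, one has to recover a gradient from the values of $f$ at only $n+1$ points while the estimate is required on the $10$-fold dilated simplex, so the conditioning of the Gram matrix $A^\sigma$ --- equivalently the heights of $\sigma$, hence the thinness $\Theta_T$ --- necessarily enters, and keeping the resulting power of $\Theta_T$ as small as claimed, together with checking that $\exp_{X_\sigma}$ really is a well--behaved chart on all of $10\cdot T_\sigma$, is where the care is needed.
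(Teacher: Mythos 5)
Your $C^0$ estimate is exactly the paper's: a second--order Taylor expansion of $f$ along the radial geodesics $t\mapsto\exp_{X_\sigma}(tv)$, controlled by $\|Ddf\|_\infty$ and then converted via Proposition \ref{estimapprox}. For the gradient bound you take a genuinely different route. The paper fixes the linear form $L$ on $T_{X_\sigma}M$ with $L^f_\sigma=L\circ\exp_{X_\sigma}^{-1}$ and compares $dL^f_\sigma$ and $df$ at $\gamma(1)=\exp_{X_\sigma}(v)$ by a triangle inequality through $L$ at the base point: the second estimate of Theorem \ref{BuserKarcher} handles $|dL^f_\sigma(w(1))-L(w)|$, a one--line integration along $\gamma$ with parallel transport gives $|df(w(1))-df_{X_\sigma}(w)|\le\|Ddf\|_\infty|v||w|$, and the residual $|df_{X_\sigma}-L|$ is bounded by inverting the Gram matrix $A^\sigma$ on the $n$ vertex values $(df-L)(v^\sigma_j)$ (this is where $\Theta_T$ enters). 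You instead invoke the classical Lagrange interpolation identity $\nabla(\tilde f-\ell)(x)=-\sum_j R_j\nabla\mu_j$ in the chart, bound $|\nabla\mu_j|$ by inverse heights and the remainders $R_j$ by the Euclidean Hessian of $\tilde f=f\circ\exp_{X_\sigma}$, and then convert $\nabla^2\tilde f$ to $Ddf$. The linear algebra (inverting $A^\sigma$ versus reciprocal heights of $\Delta_\sigma$) is essentially the same, and the thinness enters in the same place; the substantive divergence is that your route requires a bound on the \emph{second} derivative of $\exp_{X_\sigma}$ (the acceleration of $s\mapsto\exp_{X_\sigma}(w+su)$ you allude to) in order to pass from the chart Hessian $\nabla^2\tilde f$ to the Riemannian $Ddf$. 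Theorem \ref{BuserKarcher} as quoted only gives zeroth-- and first--order comparisons of $\exp_{X_\sigma}$, so that Jacobi--field input would have to be added (it is available in \cite{BuK}, just not restated here), whereas the paper's parallel--transport argument makes $Ddf$ appear directly and so stays within the stated toolkit. Both arguments would also need the same care in tracking the power of $\Theta_T$ coming from $\|(A^\sigma)^{-1}\|$.
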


\begin{proof}
  We set $v=\sum_{j=1}^n\theta_j v_j^\sigma$ and $\gamma(t)=\exp_{X_\sigma}(tv)$, then we have that
$$\displaylines{
    \Bigl|f\bigl(\gamma(1)\bigr)-f(X_\sigma)-g_x\bigl(D f(x),\dot{\gamma}(0)\bigr)\Bigr|=\Bigl|\int_0^1\int_0^s\frac{d^2}{dt^2}f\circ\gamma(t)\,dt\,ds\Bigr|\hfill\cr
\hfill\leq\Bigl|\int_0^1\int_0^s|D df|\circ\gamma(t)\,dt\,ds\Bigr||\dot{\gamma}(0)|^2\leq\|D df\|_\infty\frac{d\bigl(X_\sigma,\gamma(1)\bigr)^2}{2}.}
$$
This implies that
$$\displaylines{\Bigl|f\bigl(\exp_{X_\sigma}(\sum_{j=1}^n\theta_j v_j^\sigma)\bigr)-f(X_\sigma)-\sum_{j=1}^n\theta_jg_x(D f(x), v_j^\sigma)\Bigr|\hfill\cr
\hfill\leq C(n)e^{2n^3\Lambda}(1+\Diam_M^2\lambda_p)^\frac{(n+1)^2}{2}\bigl(\frac{m_T}{\Diam_M}\bigr)^2\|f\|_2.}$$
If we set $\theta_j=\delta_{jk}$, then this inequality gives that
\begin{equation}\label{ineq3}
\Bigl|f(x_{i_\sigma(k)})-f(X_\sigma)-g_x(D f(x), v_k^\sigma)\Bigr|\leq C(n)e^{2n^3\Lambda}(1+\Diam_M^2\lambda_p)^\frac{(n+1)^2}{2}\bigl(\frac{m_T}{\Diam_M}\bigr)^2\|f\|_2,
\end{equation}
which combined with the previous inequality gives the first result.

Set $L$ the linear form on $T_{X_\sigma}M$ such that $L_\sigma^f=L\circ\exp_{X_\sigma}^{-1}$. For any $w\in T_{X_\sigma}M$ we set $w(t)$ the parallel transport of $w$ along $\gamma$. Then by Theorem \ref{BuserKarcher}, we have that
\begin{equation}
  \label{ineq2}
  \Bigl|dL_\sigma^f\bigl(w(1)\bigr)-L(w)\Bigr|\leq\|L\|\bigl|d\exp_{X_\sigma}^{-1}\bigl(w(1)\bigr)-w\bigr|
\leq4\|L\||w|(\frac{\Lambda m}{\delta})^2.
\end{equation}
On the other hand, we have that
$$\displaylines{\bigl|df\bigl(w(1)\bigr)-df(w)\bigr|\leq\Bigl|\int_0^1\frac{d}{dt}df\bigl(w(t)\bigr)dt\Bigr|\leq\Bigl|\int_0^1Ddf\bigl(\dot{\gamma},w(t)\bigr)dt\Bigr|\leq\|Ddf\|_\infty|v||w|.}$$
Hence we get
\begin{equation}\label{ineq1}
  \bigl|\bigl(dL_\sigma^f-df\bigr)w(1)\bigr|\leq \Bigl(2|df-L|+ C(n)e^{n^3\Lambda}(1+\delta^2\lambda_p)^\frac{n^2}{2}\|df\|_\infty\frac{m}{\delta}\Bigr)|w|.
\end{equation}
Now, by Inequality \eqref{ineq3} we have that
$$\displaylines{\bigl|df(v_j^\sigma)-L(v_j^\sigma)\bigr|=\bigl|df(v_j^\sigma)-f(x_{i_\sigma(j)})+f(X_\sigma)\bigr|\leq C(n)\Theta e^{n^3\Lambda}(1+\Diam_M^2\lambda_p)^\frac{n^2}{2}\frac{m^2}{\Diam}\|df\|_\infty,}$$
and so
$$\displaylines{|df-L|^2=\sum_{jk}(A_\sigma^{-1})_{jk}(df-L)(v_j^\sigma)(df-L)(v_k^\sigma)\leq\|A_\sigma^{-1}\|\sum_j[(df-L)(v_j^\sigma)]^2\cr
\leq \Theta^{4}C(n) e^{2n^3\Lambda}(1+\Diam_M^2\lambda_p)^{n^2}(\frac{m}{\Diam})^2\|df\|^2_\infty.}$$
If we combine this inequality with Inequality \eqref{ineq1}, we get
$$|dL_\sigma^f-df|\leq C(n)\Theta^{2}e^{n^3\Lambda}(1+\delta^2\lambda_p)^\frac{n^2}{2}\frac{m}{\delta}\|df\|_\infty,$$
which gives the result by Proposition \ref{estimapprox}.
\end{proof}

\section{Proof of Theorem \ref{approvalp}}

\subsection{A discrete $L^2$ norm}\label{DiscrL2}

We prove that $|R(f)|^2_T$ gives an approximation of $\|f\|_2^2$ on $E_p$.

\begin{proposition}\label{DiscL2} Let $(M^n,g)$ be a compact Riemannian manifold with $\Diam_M^2|\sigma|\leq\Lambda^2$ and $T$ a geodesic triangulation with $10m_T\leq\inf\bigl(\inj,\frac{\Diam_M}{C(n)\Lambda\Theta^{n}_T}\bigr).$
Then, for all $f\in E_p$, we have
  $$\Bigl|\int_Mf^2-|R(f)|^2_T\Bigr|\leq C(n)\Theta^{2n}e^{5n^3\Lambda}\bigl(1+\Diam_M^2\lambda_p\bigr)^{3n^2}\bigl(\frac{m_T}{\Diam_M}\bigr)^2\int_Mf^2.$$
\end{proposition}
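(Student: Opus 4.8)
The plan is to realise $|R(f)|^2_T$ exactly as the sum, over the top-dimensional simplices, of the Euclidean $L^2$-norms of the affine interpolants $L^f_\sigma$ of Lemma \ref{Disceigenf}, and then to compare this sum with $\int_M f^2$ using the $C^0$-estimate of that lemma together with the near-isometry of the exponential maps.

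\emph{Step 1: an exact formula for $|R(f)|^2_T$.} For an affine function $L=\sum_{k=0}^n a_k\lambda_k$ on a Euclidean $n$-simplex $\Delta$ written in barycentric coordinates $\lambda_k$ (so $a_k$ are the vertex values), the elementary identities $\int_\Delta\lambda_k\lambda_l\,dx=\frac{1+\delta_{kl}}{(n+1)(n+2)}\Vol\Delta$ give $\int_\Delta L^2=\frac{2\Vol\Delta}{(n+1)(n+2)}\sum_{0\le k\le l\le n}a_ka_l$. I would apply this to the Euclidean simplex $\Delta_\sigma=(0_{X_\sigma},v^\sigma_1,\dots,v^\sigma_n)\subset T_{X_\sigma}M$, whose volume is $\frac1{n!}\sqrt{\det A^\sigma}$, and to the affine function $\tilde L^f_\sigma$ on $\Delta_\sigma$ with vertex values $f(x_{i_\sigma(k)})$; note that $1\cdot T_\sigma=B^\sigma_0(\Delta^n)=\exp_{X_\sigma}(\Delta_\sigma)$ and $L^f_\sigma=\tilde L^f_\sigma\circ\exp_{X_\sigma}^{-1}$ there. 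Summing over $\sigma\in S_n$ and regrouping the products $f(x_i)f(x_j)$ ($i\le j$) according to the simplices of $St_n([x_i,x_j])$ yields exactly $|R(f)|^2_T=\sum_{\sigma\in S_n}\int_{\Delta_\sigma}(\tilde L^f_\sigma)^2$, the combinatorial factor $\tfrac2{(n+2)!}$ of \eqref{L2disc} matching the simplex-integration constant.

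\emph{Step 2: transfer to $M$ and the overlap error.} By Theorem \ref{Lipexp}, $\exp_{X_\sigma}$ is a diffeomorphism of a neighbourhood of $\Delta_\sigma$ onto $1\cdot T_\sigma$ whose Jacobian differs from $1$ by at most $C(n)\Lambda^2(m_T/\Diam_M)^2$ in relative size, so $\bigl|\int_{1\cdot T_\sigma}(L^f_\sigma)^2-\int_{\Delta_\sigma}(\tilde L^f_\sigma)^2\bigr|\le C(n)\Lambda^2(m_T/\Diam_M)^2\|f\|_\infty^2\Vol(1\cdot T_\sigma)$. Corollary \ref{discret}(2) (the $(1-\eta)\cdot T_\sigma$ pairwise disjoint and the $(1+\eta)\cdot T_\sigma$ covering $M$, with $\eta=C(n)\Theta_T^{2n}(\Lambda/\Diam_M)^2m_T^2$) gives $\sum_{\sigma\in S_n}\Vol(1\cdot T_\sigma)\le 2\Vol M$, and since $1\cdot T_\sigma$ lies between $(1-\eta)\cdot T_\sigma$ and $(1+\eta)\cdot T_\sigma$ it sandwiches both $\int_M f^2$ and $\sum_{\sigma\in S_n}\int_{1\cdot T_\sigma}f^2$ between $\sum_\sigma\int_{(1-\eta)T_\sigma}f^2$ and $\sum_\sigma\int_{(1+\eta)T_\sigma}f^2$. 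Hence $\bigl|\int_M f^2-\sum_{\sigma\in S_n}\int_{1\cdot T_\sigma}f^2\bigr|\le\|f\|_\infty^2\sum_\sigma\Vol\bigl((1+\eta)T_\sigma\setminus(1-\eta)T_\sigma\bigr)\le C(n)\eta\,\|f\|_\infty^2\Vol M$, using Theorem \ref{Lipexp} once more to compare the nearly homothetic Euclidean simplices.

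\emph{Step 3: affine approximation error and conclusion.} On $1\cdot T_\sigma\subset 10\cdot T_\sigma$, Lemma \ref{Disceigenf} gives $\|f-L^f_\sigma\|_\infty\le\rho:=C(n)e^{2n^3\Lambda}(1+\Diam_M^2\lambda_p)^{(n+1)^2/2}(m_T/\Diam_M)^2\|f\|_2$, so for $m_T$ small enough that $\rho\le\|f\|_\infty$ one gets $\bigl|\int_{1\cdot T_\sigma}f^2-\int_{1\cdot T_\sigma}(L^f_\sigma)^2\bigr|\le\rho(2\|f\|_\infty+\rho)\Vol(1\cdot T_\sigma)\le 3\rho\|f\|_\infty\Vol(1\cdot T_\sigma)$, and summing costs a further $\sum_\sigma\Vol(1\cdot T_\sigma)\le 2\Vol M$. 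Adding the three error contributions from Steps 2 and 3, inserting $\|f\|_\infty^2\le C(n)e^{n^2\Lambda}(1+\Diam_M^2\lambda_p)^{n/2}\|f\|_2^2$ from Proposition \ref{estimapprox}, using $\Vol M\cdot\|f\|_2^2=\int_M f^2$, and observing that every exponent that occurs is controlled by $3n^2$ (for $1+\Diam_M^2\lambda_p$) and by $5n^3$ (for $e^{\cdot\,\Lambda}$) when $n\ge 2$ while $\Theta_T\ge 1$ absorbs the leftover $\Theta_T^{2n}$, yields the announced bound. The step requiring the most care is Step 1 (pinning down the constant and the recollection over $St_n([x_i,x_j])$); Steps 2 and 3 are routine given Lemma \ref{Disceigenf}, Corollary \ref{discret} and Theorem \ref{Lipexp}, the only real work being to keep the accumulated geometric constants inside the claimed envelope.
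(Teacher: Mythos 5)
Your proposal is correct and follows essentially the same route as the paper: tile $M$ by the $(1\pm\eta)T_\sigma$ via Corollary \ref{discret}(2), replace $f$ by the affine interpolant $L_\sigma^f$ using Lemma \ref{Disceigenf}, transfer to the Euclidean simplices $\Delta_\sigma$ via Theorem \ref{Lipexp}, and recognise the barycentric integration formula behind \eqref{L2disc}. The only difference is bookkeeping — you first pin down the exact identity $|R(f)|^2_T=\sum_\sigma\int_{\Delta_\sigma}(\tilde L^f_\sigma)^2$ and then split the error into three separate pieces, whereas the paper chains the corresponding inequalities in a single sweep from $\int_{(1+\eta)T_\sigma}f^2$ down to the discrete quantity.
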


\begin{proof}
Corollary \ref{discret} (2) implies that 
$$\dis\sum_{\sigma\in S_n}\int_{(1-\eta)T_\sigma}f^2\leq\int_Mf^2\leq\sum_{\sigma\in S_n}\int_{(1+\eta)T_\sigma}f^2.$$
We have $\dis\int_{(1+\eta)T_\sigma}f^2\leq\int_{(1+\eta)T_\sigma}(L_\sigma^f)^2+(2\|f\|_\infty+\|L_\sigma^f-f\|_\infty)\|L_\sigma^f-f\|_\infty$.
By Lemma \ref{Disceigenf} and Proposition \ref{estimapprox}, we have
$$\int_{(1+\eta)T_\sigma}f^2\leq\int_{(1+\eta)T_\sigma}(L_\sigma^f)^2+C(n)e^{5n^3\Lambda}(1+\Diam_M^2\lambda_p)^{3n^2}(\frac{m}{\delta})^2\|f\|_2^2\Vol \bigl((1+\eta)\cdot T_\sigma\bigr)$$
By Theorem \ref{Lipexp} we have that
\begin{align*}
&\int_{(1+\eta)T_\sigma}(L_\sigma^f)^2=\int_{(1+\eta)\Delta_\sigma}\Bigl(f(X_\sigma)+\sum_{j=1}^n\theta_j\bigl[f(x_{i_\sigma(j)})-f(X_\sigma)\bigr]\Bigr)^2dv_{\exp_{X_\sigma}^*g}\\
&\leq(1+4\Lambda^2(\frac{m}{\delta})^2)^n\int_{(1+\eta)\Delta_\sigma}\Bigl(f(X_\sigma)+\sum_{j=1}^n\theta_j\bigl[f(x_{i_\sigma(j)})-f(X_\sigma)\bigr]\Bigr)^2dv_{g_{X_\sigma}}\\
&\leq\Bigl[\int_{\Delta_\sigma}\Bigl(f(X_\sigma)+\sum_{j=1}^n\theta_j\bigl[f(x_{i_\sigma(j)})-f(X_\sigma)\bigr]\Bigr)^2dv_{g_{X_\sigma}}+2^n4\|f\|^2_\infty\bigl(\Lambda^2(\frac{m}{\delta})^2+\eta\bigr)\Vol \Delta_\sigma\Bigr]\\
&\leq\frac{2\sqrt{\det A_\sigma}}{(n+2)!}\sum_{ 0\leq j\leq j'\leq n}\hskip-4mmf(x_{i_\sigma(j)})f(x_{i_\sigma(j')})+C(n)\Theta^{2n}(\frac{\Lambda m}{\delta})^2\|f\|^2_\infty\Vol(1-\eta)T_\sigma
\end{align*}
So, we have
$$\displaylines{\int_{(1+\eta)T_\sigma}f^2\leq\frac{2\sqrt{\det A_\sigma}}{(n+2)!}\sum_{ 0\leq j\leq j'\leq n}\hskip-4mmf(x_{i_\sigma(j)})f(x_{i_\sigma(j')})\hfill\cr
\hfill+ C(n)e^{5n^3\Lambda}(1+\Diam_M^2\lambda_p)^{3n^2}\Theta^{2n}(\frac{m}{\delta})^2\|f\|_2^2\Vol \bigl((1-\eta)\cdot T_\sigma\bigr)}$$
By summing on $\sigma\in S_n$, we get $\int_Mf^2-|R(f)|^2\leq C(n)e^{5n^3\Lambda}(1+\Diam_M^2\lambda_p)^{3n^2}(\frac{m}{\delta})^2\Theta^{2n}\int_Mf^2$.
By the same way, we get the reverse inequality.
\end{proof}

\subsection{A discrete Dirichlet energy}\label{Diri}
We prove that $q_T\bigl(R(f)\bigr)$ approximates $\|df\|_2^2$ on $E_p$.

\begin{proposition}\label{Diri1}
 Let $(M^n,g)$ be a compact Riemannian manifold with $\Diam_M^2|\sigma|\leq\Lambda^2$ and $T$ a geodesic triangulation of $M$ with $10m_T\leq\inf\bigl(\inj,\frac{\Diam_M}{C(n)\Theta^{n}\Lambda}\bigr)$.
Then for any $f\in E_p$, we have that
$$\displaylines{\Bigl|q_T\bigl(R(f)\bigr)-\int_M|df|^2\Bigr|\leq  C(n)\Theta^{2n+4}e^{3n^3\Lambda}(1+\delta^2\lambda_p)^{2n^2}\frac{m_T}{\Diam_M}\int_M|df|^2.}$$
\end{proposition}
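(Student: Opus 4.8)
The plan is to mimic, one order up, the argument of Proposition~\ref{DiscL2}: there $f$ was compared on each $T_\sigma$ with its affine model $L_\sigma^f$, here $df$ will be compared with $dL_\sigma^f$. Two facts drive the proof. First, Corollary~\ref{discret}(2) yields the sandwich $\sum_{\sigma\in S_n}\int_{(1-\eta)T_\sigma}|df|^2\leq\int_M|df|^2\leq\sum_{\sigma\in S_n}\int_{(1+\eta)T_\sigma}|df|^2$. Second, since $L_\sigma^f\circ\exp_{X_\sigma}$ is the affine function on $T_{X_\sigma}M$ sending $0$ to $f(X_\sigma)$ and $v_k^\sigma$ to $f(x_{i_\sigma(k)})$, its differential is the constant covector $L$ with $L(v_k^\sigma)=f(x_{i_\sigma(k)})-f(X_\sigma)$, and the Euclidean Dirichlet energy of this affine model over the simplex $\Delta_\sigma=(0,v_1^\sigma,\dots,v_n^\sigma)\subset T_{X_\sigma}M$ is exactly
$$\int_{\Delta_\sigma}|dL_\sigma^f|^2\,dv_{g_{X_\sigma}}=\frac{\sqrt{\det A^\sigma}}{n!}\sum_{k,l=1}^n(A^\sigma)^{kl}\bigl(f(x_{i_\sigma(k)})-f(X_\sigma)\bigr)\bigl(f(x_{i_\sigma(l)})-f(X_\sigma)\bigr),$$
since the $(A^\sigma)^{kl}$ are the entries of the inverse of the Gram matrix $A^\sigma$ and $\Vol_{g_{X_\sigma}}(\Delta_\sigma)=\sqrt{\det A^\sigma}/n!$. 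Summed over $\sigma\in S_n$, this is precisely $q_T(R(f))=\sum_\sigma\int_{\Delta_\sigma}|dL_\sigma^f|^2\,dv_{g_{X_\sigma}}$, by the definition \eqref{Ddisc}.

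With these in hand, I would bound, simplex by simplex, the difference between $\int_{(1\pm\eta)T_\sigma}|df|^2$ and the $\sigma$-term of $q_T(R(f))$ in three steps. (i) Replace $|df|^2$ by $|dL_\sigma^f|^2$ pointwise on $(1\pm\eta)T_\sigma$: from $\bigl||df|^2-|dL_\sigma^f|^2\bigr|\leq(2\|df\|_\infty+\|df-dL_\sigma^f\|_\infty)\,\|df-dL_\sigma^f\|_\infty$, the second estimate of Lemma~\ref{Disceigenf} and the gradient bound of Proposition~\ref{estimapprox}, this step costs at most a constant times $\Theta^2e^{3n^3\Lambda}(1+\Diam_M^2\lambda_p)^{n^2}\frac{m_T}{\Diam_M}\|df\|_2^2\,\Vol\bigl((1\pm\eta)T_\sigma\bigr)$. (ii) Pull back by $\exp_{X_\sigma}$ and use Theorem~\ref{Lipexp} for the distortion of the volume form and of the cometric, and Theorem~\ref{BuserKarcher} for $d\exp$ against parallel transport, to pass from $\int_{(1\pm\eta)T_\sigma}|dL_\sigma^f|^2\,dv_g$ to $\bigl(1+O(n\Lambda^2(m_T/\Diam_M)^2)\bigr)\int_{(1\pm\eta)\Delta_\sigma}|dL_\sigma^f|^2\,dv_{g_{X_\sigma}}$. (iii) Pass from $(1\pm\eta)\Delta_\sigma$ to $\Delta_\sigma$, losing a factor $1+O(n\eta)$. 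Bounding the $\sigma$-term of $q_T$ by $\|dL_\sigma^f\|_\infty^2\,\Vol(\Delta_\sigma)$, hence by a multiple of $\|df\|_\infty^2\,\Vol(T_\sigma)$, and recalling that $\eta=C(n)\Theta^{2n}(\Lambda/\Diam_M)^2m_T^2$ while the hypothesis $10m_T\leq\Diam_M/(C(n)\Theta_T^{n}\Lambda)$ forces both $\eta$ and $\Lambda^2(m_T/\Diam_M)^2$ to be at most a constant times $m_T/\Diam_M$, the errors of (ii) and (iii) are of the same nature as, and no worse than, that of (i).

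Finally I would sum over $\sigma\in S_n$. The sandwich inequality together with the per-simplex bounds gives $\bigl|\int_M|df|^2-q_T(R(f))\bigr|$ bounded by a constant times $\Theta_T^{2n+4}e^{3n^3\Lambda}(1+\Diam_M^2\lambda_p)^{2n^2}\frac{m_T}{\Diam_M}\|df\|_2^2\sum_\sigma\Vol\bigl((1+\eta)T_\sigma\bigr)$; since the $(1-\eta)T_\sigma$ are disjoint and $\Vol\bigl((1+\eta)T_\sigma\bigr)\leq(1+O(n\eta))\Vol\bigl((1-\eta)T_\sigma\bigr)$, one has $\sum_\sigma\Vol((1+\eta)T_\sigma)\leq 2\Vol M$, and the normalization $\|df\|_2^2=\frac1{\Vol M}\int_M|df|^2$ turns the right-hand side into the asserted multiple of $\int_M|df|^2$. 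The exponents $2n+4$, $3n^3$, $2n^2$ come from collecting the factors of $\Theta_T$, $e^\Lambda$ and $(1+\Diam_M^2\lambda_p)$ produced by Lemma~\ref{Disceigenf} and Proposition~\ref{estimapprox}, together with the extra $\Theta$-powers carried by $\eta$ and by the inverse Gram matrices $(A^\sigma)^{-1}$, the latter entering squared relative to Lemma~\ref{Disceigenf}.

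The main obstacle is not any individual estimate but the bookkeeping that keeps the error \emph{first} order in $m_T/\Diam_M$: contrary to Proposition~\ref{DiscL2}, where $\|f-L_\sigma^f\|_\infty$ is $O((m_T/\Diam_M)^2)$ because it rests on the Hessian bound, differentiating the affine model loses one order, so $\|df-dL_\sigma^f\|_\infty$ is only $O(m_T/\Diam_M)$ and already the cross term $2\|df\|_\infty\|df-dL_\sigma^f\|_\infty$ contributes at first order. One must therefore make sure that every remaining discrepancy — $d\exp$ versus parallel transport, the volume and cometric distortions of $\exp_{X_\sigma}$, and the $1\pm\eta$ dilation of the simplices — is genuinely $O((m_T/\Diam_M)^2)$, so as not to spoil even this rate, and that it stays controlled after passing through the inverse Gram matrices. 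That careful tracking, rather than a hard analytic step, is the crux.
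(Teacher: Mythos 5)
Your proposal reproduces the paper's argument in its entirety: the same per-simplex replacement of $|df|^2$ by $|dL_\sigma^f|^2$ via the gradient estimate of Lemma~\ref{Disceigenf} and Proposition~\ref{estimapprox}, the same identification of $q_T(R(f))$ with $\sum_\sigma\int_{\Delta_\sigma}|L|^2\,dv_{g_{X_\sigma}}=\sum_\sigma\frac{\sqrt{\det A^\sigma}}{n!}\sum_{k,l}(A^\sigma)^{kl}L(v_k^\sigma)L(v_l^\sigma)$, and the same pull-back and $\eta$-dilation corrections controlled by Theorems~\ref{Lipexp} and \ref{BuserKarcher} and summed via Corollary~\ref{discret}(2). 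The paper states this very tersely by pointing to ``as in the proof of Proposition~\ref{DiscL2}''; you have simply spelled out the steps it leaves implicit.
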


\begin{proof}
As in the proof of Proposition \ref{DiscL2}, we have
$$\int_{(1+\eta)T_\sigma}|df|^2\leq\int_{(1+\eta)T_\sigma}|dL_\sigma^f|^2+C(n)\Theta^4e^{2n^3\Lambda}(1+\Diam_M^2\lambda_p)^{n^2}\frac{m}{\Diam_M}\|df\|_\infty^2\Vol\bigl( (1+\eta)\cdot T_\sigma\bigr).$$
Using Lemma \ref{Disceigenf} and the fact that $\int_{\Delta_\sigma}|L|^2=\frac{\sqrt{\det A_\sigma}}{n!}\sum_{k,l=1}^nA_\sigma^{kl}L(v_\sigma^k)L(v_\sigma^l)$, where $L$ is the linear map defined in the proof of Proposition \ref{DiscL2} (i.e. $L(v_\sigma^k)=f(x_{i_\sigma(k)})-f(X_\sigma)$) we get the result.
\end{proof}

\subsection{A Withney map}\label{triang}

We construct an extending (i.e. Withney type) map which to each $(y_i)\in \R^N$ associates a function $f:M\to \R$ such that $f(x_i)=y_i$ for all $1\leq i\leq N$. This function $f$ has to be such that $\int f^2$ and $\int |df|^2$ be close to $|y_i|_T^2$ and $q_T(y_i)$. In that purpose we take $f$ almost linear by part.

We need first some controlled partitions of the unity on $M$ associated to the geodesic triangulations.

\begin{lemma}\label{partunit}
  Let $(M^n,g)$ be a compact, Riemannian $n$-manifold with $\delta(M)^2|\sigma|\leq\Lambda^2$ and $T$ be a geodesic triangulation of $M$ with $10m_T\leq\bigl(\inj,\frac{\Diam_M}{C(n)\Lambda^2\Theta^{4n^2}_T}\bigr)$.
There exists some smooth functions $(\phi_\sigma)_{\sigma\in S_n}$ such that
\begin{enumerate}
\item $\phi_\sigma:M\to[0,1]$, $\phi_\sigma=1$ on $S_\sigma$ and ${\rm Supp}\,\phi_\sigma\subset\overline{S}_\sigma$ for any $\sigma\in S_n$,
\item $\sum_{\sigma\in S_n}\phi_\sigma=1$ and $|d\phi_\sigma|\leq\frac{C(n)\Theta_T^{5n^2}\Diam_M}{m^2_T}$ for any $\sigma\in S_n$.
\end{enumerate}
\end{lemma}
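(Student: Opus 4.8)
The plan is to build the partition of unity by a standard "normalize a family of bump functions subordinate to an open cover" argument, using the decomposition of $M$ into the pieces $\overline{S}_\sigma$ provided by Proposition \ref{Ssigma}. First I would fix, for each $\sigma\in S_n$, a smooth cutoff $\psi_\sigma:M\to[0,1]$ that equals $1$ on a neighbourhood of $S_\sigma$ and is supported in $\overline{S}_\sigma$, and then set $\phi_\sigma=\psi_\sigma/\sum_{\tau\in S_n}\psi_\tau$. The denominator is everywhere positive provided the $\psi_\sigma$ already cover $M$; since $M=\bigsqcup_{\sigma\in K}S_\sigma$ and each $S_\tau$ with $\dim\tau<n$ lies in the closure of some $S_\sigma$ with $\sigma\in S_n$ (the simplices of $K$ are all faces of $n$-simplices), choosing the $\psi_\sigma$ to be $1$ on a definite-size tubular enlargement of $S_\sigma$ inside $\overline S_\sigma$ makes $\sum_\sigma\psi_\sigma\geq1$ on $M$. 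Properties (1) and the first half of (2) are then immediate.

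\textbf{Construction of the individual bumps.} The key is to produce each $\psi_\sigma$ with gradient bounded by $C(n)\Theta_T^{c n^2}\Diam_M/m_T^2$. Recall $S_\sigma=B\bigl(T_\sigma,\tfrac{m_T\alpha^{p+1}}{\Diam_M}m_T\bigr)\setminus\bigcup_{\tau\subsetneq\sigma}B\bigl(T_\tau,\tfrac{m_T\alpha^{\dim\tau+1}}{\Diam_M}m_T\bigr)$, where $\alpha=1/(C(n)\Theta_T^{2n})$, so the various tubular radii around the generalized faces $T_\tau$ are all comparable to $\alpha^{n}\tfrac{m_T}{\Diam_M}m_T$, i.e.\ to $m_T^2/\Diam_M$ up to a factor $\Theta_T^{O(n^2)}$. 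I would take $\psi_\sigma=\chi\bigl(\tfrac{\Diam_M}{m_T^2}\,\mathrm{dist}(\cdot,A_\sigma)\bigr)$ for a fixed one-variable cutoff $\chi$ and a suitable closed set $A_\sigma$ built from $T_\sigma$ minus slightly smaller tubes around the proper faces; since the distance function to any set is $1$-Lipschitz, $|d\psi_\sigma|\leq\|\chi'\|_\infty\,\Diam_M/m_T^2$, and the $\Theta_T^{O(n^2)}$ loss enters only through how many "layers" $\chi$ has to traverse, i.e.\ through the ratios of the nested tubular radii in the definition of $S_\sigma$, each of which is a power of $\alpha^{-1}=C(n)\Theta_T^{2n}$. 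The crucial fact that makes $\psi_\sigma=1$ on $S_\sigma$ and $\mathrm{Supp}\,\psi_\sigma\subset\overline S_\sigma$ consistent with these choices is Point (4) of Corollary \ref{discret} together with Proposition \ref{Ssigma}(2): the relevant enlargements of $T_\sigma\setminus B(T_{\sigma\cap\tau},\cdots)$ and of $T_\tau$ stay disjoint, so the bumps for different $\sigma$ have the expected incidence pattern and at most $C(n)$ of them overlap at any point.

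\textbf{Controlling the gradient of the quotient.} Having $|d\psi_\sigma|\leq C(n)\Theta_T^{O(n^2)}\Diam_M/m_T^2$ and $\sum_\sigma\psi_\sigma\geq1$ with bounded overlap, I would differentiate $\phi_\sigma=\psi_\sigma/\Sigma$, $\Sigma=\sum_\tau\psi_\tau$, to get $d\phi_\sigma=(d\psi_\sigma)/\Sigma-\psi_\sigma(d\Sigma)/\Sigma^2$; since $\Sigma\geq1$, $0\leq\psi_\sigma\leq1$, and $|d\Sigma|\leq\sum_{\tau:\,x\in\overline S_\tau}|d\psi_\tau|\leq C(n)\cdot C(n)\Theta_T^{O(n^2)}\Diam_M/m_T^2$ (at most $C(n)$ nonzero terms by the bounded-overlap property), we obtain $|d\phi_\sigma|\leq C(n)\Theta_T^{O(n^2)}\Diam_M/m_T^2$, which is the claimed bound after absorbing constants and adjusting the exponent of $\Theta_T$ to $5n^2$. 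One must also check smoothness: distance functions are only Lipschitz, so I would either smooth them on the relevant annuli (keeping the Lipschitz bound) or, cleanly, replace $\mathrm{dist}(\cdot,A_\sigma)$ by a smoothing at scale $\ll m_T^2/\Diam_M$ using a mollification in exponential coordinates, whose derivative cost is controlled by Theorem \ref{Lipexp}.

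\textbf{Main obstacle.} The delicate point is not the normalization trick but bookkeeping the interplay between the $n+1$ nested scales $\tfrac{m_T\alpha^{p+1}}{\Diam_M}m_T$ ($0\le p\le n$) in the definition of $S_\sigma$ and the disjointness statement in Corollary \ref{discret}(4): one has to choose the inner and outer radii of each $\psi_\sigma$'s transition annulus so that (a) $\psi_\sigma\equiv1$ on all of $S_\sigma$, (b) $\mathrm{Supp}\,\psi_\sigma\subset\overline S_\sigma=\bigcup_{\tau\subseteq\sigma}S_\tau$, (c) the annuli where $d\psi_\sigma\ne0$ for different $\sigma$ do not pile up more than $C(n)$-fold, and (d) the total number of scale-ratios $\chi'$ must absorb is $O(n)$, each contributing a factor $\Theta_T^{2n}$, giving the final exponent $\Theta_T^{5n^2}$. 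Getting a clean, checkable choice of these radii — rather than an existence-by-compactness argument with uncontrolled constants — is where the real work lies, and it is exactly here that Points (2) and (4) of Corollary \ref{discret} and Proposition \ref{Ssigma} are used in an essential, quantitative way.
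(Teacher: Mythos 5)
Your proposal is correct and follows essentially the same route as the paper: build individual cutoffs $\psi_\sigma$ supported in a slightly enlarged $T_\sigma$ and equal to $1$ slightly inside it, normalize by $\Sigma=\sum_\tau\psi_\tau$, and control $|d\phi_\sigma|$ via the $1$-Lipschitz distance estimate plus a bounded-overlap count ($\leq C(n)\Theta_T^{2n}$ nonzero $d\psi_\tau$ at any point). The only cosmetic difference is that the paper produces $\psi_\sigma$ by pushing forward a cutoff between the dilated Euclidean simplices $(1+\zeta)\Delta_\sigma$ and $(1+2\zeta)\Delta_\sigma$ through $\exp_{X_\sigma}$ (with $\zeta=\alpha^{n+1}m_T/(2\Diam_M)$, using Theorem \ref{Lipexp} to transfer the Lipschitz bound), rather than via a mollified distance function to a set $A_\sigma$ carved out of $T_\sigma$ in $M$, which bypasses the smoothing and bookkeeping issues you flag as the main obstacle.
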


\begin{proof}
 We set $\zeta=\frac{\alpha^{n+1}m_T}{2\Diam_M}$. Since the distance in $(T_{X_\sigma}M,g_{X_\sigma})$ between $(1+\zeta)\Delta_\sigma$ and $T_{X_\sigma}M\setminus(1+2\zeta)\Delta_\sigma$ is larger than $\frac{\zeta m}{n\Theta^{2n}}$ (see the proof of Corollary \ref{discret}) and since $\exp_{X_\sigma}$ is a $1+\Lambda^2(\frac{m}{\delta})^2$-Lipschitzian map, there exists a function $\psi_\sigma:M\to[0,1]$ such that $\psi_\sigma=1$ on $(1+\zeta)T_\sigma$, $\psi_\sigma=0$ outside $(1+2\zeta)T_\sigma$ and $|d\psi_\sigma|\leq\frac{4n\Theta^{2n}}{\zeta m}$. We set $\phi_\sigma=\frac{\psi_\sigma}{\sum_{\tau\in S_n}\psi_\tau}$, which is well defined since by Corollary \ref{discret} we have $\cup_{\tau\in S_n}(1+\zeta)T_\tau=M$. By the same kind of arguments as in the proof of Corollary \ref{discret}, we have $(1-4\zeta)T_\sigma\cap(1+2\zeta)T_\tau=\emptyset$ for $\tau\neq\sigma$, and so $\phi_\sigma=\psi_\sigma=1$ on $(1-4\zeta)T_\sigma$. We have obviously $\phi_\sigma=0$ outside $(1+2\zeta)T_\sigma$. By a volume argument, the number of non zero term in the sum $\sum_\tau d_x\psi_\tau$ is bounded from above by $2^n\Theta^{2n}$, and so we have that
$$|d\phi_\sigma|=\Bigl|\frac{d\psi_\sigma}{\sum_\tau \psi_\tau}-\frac{\psi_\sigma\sum_\tau d\psi_\tau}{(\sum_\tau\psi_\tau)^2}\Bigr|\leq\frac{5n2^n\Theta^{4n}}{\zeta m}.$$ 
\end{proof}

\begin{definition}
  Let $f_T=(y_i)\in\R^N$. For any $\sigma\in K$, we set $L^\sigma:(\theta_k)\in F\mapsto\sum_k\theta_ky_{i_\sigma(k)}$. For $m_T$ small enough, the function $L_\sigma^{f_T}= L^\sigma\circ (B_0^\sigma)^{-1}$ is defined on $S_\sigma$ and $\dis W(f_T)=\sum_{\sigma\in S_n}\phi_\sigma L_\sigma^{f_T}$ is well defined on $M$.
\end{definition}
 This extending map $W$ satisfies the following properties.
\begin{proposition}\label{EstimW}
  If $10m_T\leq\inf\bigl(\inj,\frac{\Diam_M}{C(n)\Lambda(1+\Lambda)\Theta_T^{4n^2}}\bigr)$ then
  \begin{enumerate}
  \item $R\circ W=Id_{\R^N}$,
\item for any $f_T\in\R^N$, we have that 
$$\bigl|\int_MW(f_T)^2-|(f_T)|_T^2\bigr|\leq C(n)(1+\Lambda)\Theta^{2n}(\frac{m}{\delta})^2\bigl(|f_T|_T^2+\Diam_M^2q_T(f_T)\bigr),$$
\item for any $(f_T)\in\R^N$, we have that
$$\bigl|\int_M|dW(f_T)|^2-q_T(f_T)\bigr|\leq C(n)\Theta^{20n^2}\frac{m}{\delta} q_T(f_T).$$
  \end{enumerate}
\end{proposition}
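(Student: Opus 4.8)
The plan is to prove the three assertions of Proposition \ref{EstimW} in order, using the partition of unity from Lemma \ref{partunit} together with the decomposition of $M$ into the sets $S_\sigma$ from Proposition \ref{Ssigma} and the fat-triangulation estimates of Corollary \ref{discret}.

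\textbf{Step 1 (the identity $R\circ W=Id$).} For a vertex $x_j=x_{i_\tau(0)}$ (thought of as the $0$-simplex $\tau=\{x_j\}$), I would observe that $x_j\in S_\tau\subset\overline S_\sigma$ only for those $n$-simplices $\sigma$ containing $x_j$, and that $\phi_\sigma(x_j)=0$ unless $x_j\in\overline S_\sigma$. Hence $W(f_T)(x_j)=\sum_{\sigma\ni x_j}\phi_\sigma(x_j)L_\sigma^{f_T}(x_j)$. Since $B_0^\sigma$ sends the appropriate vertex of $\Delta^n$ to $x_j$, one has $L_\sigma^{f_T}(x_j)=L^\sigma(\text{that vertex})=y_j$ for every such $\sigma$; combined with $\sum_\sigma\phi_\sigma\equiv1$ this gives $W(f_T)(x_j)=y_j$, i.e. $R\circ W=Id_{\R^N}$. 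This step is short.

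\textbf{Step 2 (the $L^2$ estimate).} Here I would compare $W(f_T)$ on each piece $S_\sigma$ with the single affine model $L_\sigma^{f_T}$. On $S_\sigma$ we have $W(f_T)=\sum_{\tau\in St_n(\sigma)}\phi_\tau L_\tau^{f_T}$, so $W(f_T)-L_\sigma^{f_T}=\sum_\tau\phi_\tau(L_\tau^{f_T}-L_\sigma^{f_T})$, and the key point is that for $\tau,\sigma\in St_n(\rho)$ the two barycentric charts $B_0^\tau$ and $B_0^\sigma$ differ by at most $C\Lambda^2(m/\delta)^2 m$ on the relevant region (this is exactly Lemma \ref{coorbar}, possibly composed with changes of base vertex handled there). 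Therefore $|L_\tau^{f_T}-L_\sigma^{f_T}|$ is bounded by (that displacement) times the Lipschitz constant of the affine function $L_\sigma^{f_T}$, which is controlled by $\max_k|y_{i_\sigma(k)}-y_{i_\sigma(0)}|/(\text{shortest edge})$, itself estimated by $\Theta$ and $q_T(f_T)^{1/2}$ through $(A^\sigma)^{kl}$ and $\det A^\sigma$. Summing $\int_{S_\sigma}$ over $\sigma$, using $\sum\phi_\tau=1$, Theorem \ref{Lipexp} to pass between $dv_g$ and the Euclidean volume on $\Delta_\sigma$, and the identity $\int_{\Delta_\sigma}(L^\sigma)^2=\frac{2\sqrt{\det A^\sigma}}{(n+2)!}\sum_{j\le j'}y_{i_\sigma(j)}y_{i_\sigma(j')}$ already used in Propositions \ref{DiscL2}--\ref{Diri1}, I obtain $\int_M W(f_T)^2=|f_T|_T^2+(\text{error})$, where the error is bounded by $C(n)(1+\Lambda)\Theta^{2n}(m/\delta)^2(|f_T|_T^2+\Diam_M^2 q_T(f_T))$. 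The appearance of $q_T$ on the right is natural: the first-order (affine) content of $f_T$ is measured by $q_T$, and the cross terms between $L_\sigma^{f_T}$ and the small corrections produce exactly this combination.

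\textbf{Step 3 (the Dirichlet estimate) — the main obstacle.} This is where the geometry of the partition of unity really enters, because $dW(f_T)=\sum_\sigma(\phi_\sigma dL_\sigma^{f_T}+L_\sigma^{f_T}d\phi_\sigma)$ and the terms $L_\sigma^{f_T}d\phi_\sigma$ carry the huge factor $|d\phi_\sigma|\le C\Theta^{5n^2}\Diam_M/m^2$ from Lemma \ref{partunit}. The trick, following Dodziuk--Patodi / Colin de Verdi\`ere, is to use $\sum_\sigma d\phi_\sigma=0$: on $S_\rho$ one writes $\sum_\sigma L_\sigma^{f_T}d\phi_\sigma=\sum_{\sigma\in St_n(\rho)}(L_\sigma^{f_T}-L_\rho^{f_T})d\phi_\sigma$, so the dangerous term is multiplied by the \emph{difference} of two affine models, which by Step 2's estimate is of size $C\Theta^{?}(m/\delta)^2 m\cdot(\text{Lipschitz const of }L)$; the factor $m^2$ there kills the $m^{-2}$ in $|d\phi_\sigma|$, leaving a gain of order $\Theta^{O(n^2)}m/\delta$ as required. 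Similarly $\sum_\sigma\phi_\sigma dL_\sigma^{f_T}$ is compared with $dL_\rho^{f_T}$ on $S_\rho$, whose $L^2$ norm over $S_\rho$ (via Theorem \ref{Lipexp} and Proposition \ref{Ssigma}(3) to relate $\Vol S_\rho$ to $\Vol S_\tau$, $\tau\in St_n(\rho)$) gives $\frac{\sqrt{\det A^\sigma}}{n!}\sum_{k,l}(A^\sigma)^{kl}L(v_k^\sigma)L(v_l^\sigma)$ summed over $n$-simplices, which is precisely $q_T(f_T)$; comparing the Hessian-free affine gradients $dL_\sigma$ for neighbouring $\sigma$ via Theorem \ref{BuserKarcher} (as in Lemma \ref{Disceigenf}) contributes another $O(\Theta^{O(n^2)}m/\delta)$ error. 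Collecting everything and bookkeeping the powers of $\Theta$ (the source of the exponent $20n^2$) yields $\bigl|\int_M|dW(f_T)|^2-q_T(f_T)\bigr|\le C(n)\Theta^{20n^2}\frac{m}{\delta}q_T(f_T)$. The delicate point throughout is to make sure that every time a factor $|d\phi_\sigma|\sim\Diam_M/m^2$ appears it is paired with a difference of affine functions or of charts that contributes at least $m^2$, so that no negative power of $m$ survives; this cancellation, and the careful tracking of $\Theta$-powers through Corollary \ref{discret} and Lemma \ref{partunit}, is the heart of the argument.
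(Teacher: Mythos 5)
Your proposal follows the paper's strategy quite closely: Step 1 is the same (it is observed to be immediate from $\phi_\sigma=1$ on $S_\sigma$ and $L_\sigma^{f_T}(x_j)=y_j$), and Step 3 correctly identifies the essential trick, namely using $\sum_\sigma d\phi_\sigma=0$ to replace $\sum_\sigma L_\sigma^{f_T}\,d\phi_\sigma$ by $\sum_\sigma (L_\sigma^{f_T}-L_{\sigma_0}^{f_T})\,d\phi_\sigma$ on each $S_\tau$, then bounding the difference of affine models via the chart comparison of Lemma \ref{coorbar} (in the paper this is spelled out by pulling $x\in S_\tau$ back to a reference realisation $\tau_0$, then to the corresponding point $x_\sigma$ in each neighbouring chart, giving $|L_\sigma^{f_T}(x)-L_{\sigma_0}^{f_T}(x)|\leq C(n)(|L_\sigma|+|L_{\sigma_0}|)\frac{m^2}{\delta}$).

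Two remarks on where the details differ from the paper. First, in Step 2 the paper does \emph{not} use a chart-comparison argument as you propose; it simply observes that on each $S_\tau$ one has pointwise $|W(f_T)|^2\leq C(n)\Theta^{2n}\sum_{\sigma\in St_n(\tau)}|L_\sigma^{f_T}|^2$, which reduces everything to bounding $\sum_\sigma\int_{\overline S_\sigma\setminus T_\sigma}|L_\sigma^{f_T}|^2$, and this last quantity is handled by a dilation argument on the annulus $(1+\zeta)\Delta_\sigma\setminus(1-\zeta)\Delta_\sigma$ that produces the combination $|f_T|_T^2+\Diam_M^2 q_T(f_T)$ directly from the fundamental theorem of calculus applied to $(L^\sigma)^2\circ H_\lambda$. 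Your route (comparing $L_\tau$ and $L_\sigma$ via Lemma \ref{coorbar}) could be made to work but requires more bookkeeping to land on the stated $(m/\delta)^2$ factor; the dilation argument is more economical here since no cancellation is needed. Second, a subtle but important point in Step 3: the cancellation $|L_\sigma-L_{\sigma_0}|\cdot|d\phi_\sigma|$ only yields an $O(1)$ quantity (the two powers of $m^2$ exactly cancel), not a gain of $m/\delta$ as you assert. The actual gain of $\frac{m}{\delta}$ in point (3) comes entirely from the volume estimate $\Vol S_\tau\leq C(n)(\frac{m}{\Diam_M})^{n-p}\Vol S_\sigma$ of Proposition \ref{Ssigma}(3), applied to the regions $S_\tau$ with $\dim\tau=p<n$ where the dangerous term lives. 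You do invoke Proposition \ref{Ssigma}(3), but attach it to the $\sum\phi_\sigma\,dL_\sigma$ term rather than to the $\sum L_\sigma\,d\phi_\sigma$ term; in fact both terms rely on it, and it is really the lever that produces the $\frac{m}{\delta}$. With that correction, the proof goes through.
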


\begin{proof}
  Point $(1)$ is obvious.
We now prove point $(3)$. By Inequality \eqref{ineq2}, we have $\bigl||dL_\sigma^{f_T}|^2-|L_\sigma|^2\bigr|\leq16|L_\sigma|^2\bigl(\frac{\Lambda m}{\delta}\bigr)^2$.
Since $$|L_\sigma|^2=\sum_{k,l=1}^n(A_\sigma)^{kl}(y_{i_\sigma(k)}-y_{i_\sigma(0)})(y_{i_\sigma(l)}-y_{i_\sigma(0)}),$$ we have
$$\displaylines{\int_M|dW(f_T)|^2\geq\sum_{\sigma\in S_n}\int_{S_\sigma}|dW(f_T)|^2=\sum_{\sigma\in S_n}\int_{S_\sigma}|dL_\sigma^{f_T}|^2\hfill\cr
\geq\sum_{\sigma\in S_n}\Vol T_\sigma\bigl(1-16(\frac{\Lambda m}{\delta})^2\bigr)^{n+1}\bigl(1- \frac{m\alpha^n}{\Diam_M}\bigr)^n |L_\sigma|^2\geq\bigl(1-C(n)\frac{m}{\delta}\bigr)q_T(f_T).}$$
On the other hand, we have that
$$\displaylines{\int_M|dW(f_T)|^2=\sum_{\sigma\in S_n}\int_{S_\sigma}|dW(f_T)|^2+\sum_{p=0}^{n-1}\sum_{\tau\in S_p}\int_{S_\tau}|dW(f_T)|^2\hfill\cr
\leq \bigl(1+C(n)\frac{ m}{\delta}\bigr)q_T(f_T)+\sum_{p=0}^{n-1}\sum_{\tau\in S_p}\int_{S_\tau}|dW(f_T)|^2,}$$
and for any $\tau\in K$
$$\displaylines{\int_{S_\tau}|dW(f_T)|^2=\int_{S_\tau}|\sum_{\sigma\in St_n(\tau)}d(L_\sigma^{f_T}\phi_\sigma)|^2\hfill\cr
\leq2\int_{S_\tau}\bigl(\sum_{\sigma\in St_n(\tau)}\phi_\sigma|dL^{f_T}_\sigma|\bigr)^2+2\int_{S_\tau}\bigl|\sum_{\sigma\in St_n(\tau)}L^{f_T}_\sigma d\phi_\sigma\bigr|^2.}$$
Since $|dL_\sigma^{f_T}|\leq(1+8(\frac{\Lambda m}{\delta})^2)|L_\sigma|$, and by Proposition \ref{Ssigma}, we have
\begin{align*}
\sum_{p=0}^{n-1}\sum_{\tau\in S_p}2\int_{S_\tau}\bigl(\sum_{\sigma\in St_n(\tau)}\phi_\sigma|dL^{f_T}_\sigma|\bigr)^2\leq\sum_{p=0}^{n-1}\sum_{\tau\in S_p}(1+16(\frac{\Lambda m}{\delta})^2)\sum_{\sigma\in St_n(\tau)}|L_\sigma|^2\Vol S_\tau\\
\leq C(n)\frac{m}{\Diam_M}\sum_{\sigma\in S_n}|L_\sigma|^2\Vol S_\sigma\leq C(n)\frac{m_T}{\Diam_M}q_T(f_T)
.
\end{align*}
To bound the remaining term $\int_{S_\tau}\bigl|\sum_{\sigma\in St_n(\tau)}L^{f_T}_\sigma d\phi_\sigma\bigr|^2$, we set $\sigma_0\in St_n(\tau)$ and $\tau_0$ the realisation of $\tau$ associated to $\exp_{X_{\sigma_0}}$. Then there for any $x\in S_\tau$, there exists $x'\in\tau_0$ such that $d(x,x')\leq\frac{m^2}{\delta}\alpha^{p+1}$. Hence we have $|L^{f_T}_{\sigma_0}(x)-L^{f_T}_{\sigma_0}(x')|\leq2|L_{\sigma_0}|\frac{m^2}{\delta}\alpha^{p+1}$. Note that the barycentric coordinates $(\theta_k)$ of $x'$ in the simplex $\sigma$ satisfy $\theta_k=0$ if $x_{i_{\sigma_0}(k)}\notin\tau$. For any $\sigma\in St_n(\tau)$, we set $x_\sigma$ the point whose barycentric coordinates in $\sigma$ are $\theta'_{i_\sigma^{-1}(x)}=\theta_{i_{\sigma_0}^{-1}(x)}$ if $x$ is a vertex of $\tau$ and $\theta'_k=0$ otherwise. By applying Lemma \ref{coorbar} at most twice, we get $d(x',x_\sigma)\leq200\Lambda^2\frac{m^3}{\delta^2}$. Since $L^{f_T}_\sigma(x_\sigma)=L^{f_T}_{\sigma_0}(x')$, we have that
$$\displaylines{|L^{f_T}_\sigma(x)-L^{f_T}_{\sigma_0}(x)|\leq|L^{f_T}_\sigma(x)-L^{f_T}_\sigma(x_\sigma)|+|L^{f_T}_{\sigma_0}(x')-L^{f_T}_{\sigma_0}(x)|
\leq C(n)(|L_\sigma|+|L_{\sigma_0}|)\frac{m^2}{\delta}.}$$
Since $\sum_\sigma d\phi_\sigma=0$, Lemma \ref{partunit} gives us 
\begin{align*}
\sum_{p=1}^{n-1}\sum_{\tau\in S_p}\int_{S_\tau}&\bigl|\sum_{\sigma\in St_n(\tau)}L^{f_T}_\sigma d\phi_\sigma\bigr|^2=\sum_{p=1}^{n-1}\sum_{\tau\in S_p}\int_{S_\tau}\bigl|\sum_{\sigma\in St_n(\tau)}(L^{f_T}_\sigma-L_{\sigma_0}^{f_T})d\phi_\sigma\bigr|^2\\
&\leq \sum_{p=1}^{n-1}\sum_{\tau\in S_p} C(n)\Theta^{10n^2}\# St_n(\tau)(\sum_{\sigma\in St_n(\tau)}|L_\sigma|^2+|L_{\sigma_0}|^2)\Vol S_\tau\\
&\leq C(n)\Theta^{20n^2}\frac{m_T}{\Diam_M}\sum_{\sigma\in S_n}|L_\sigma|^2\Vol S_\sigma\leq C(n)\Theta^{20n^2}\frac{m_T}{\Diam_M}q_T(f_T) .
\end{align*}

We now prove point (2). As in the proof of point (3), we have that
$$\displaylines{\sum_{\sigma\in S_n}\int_{S_\sigma}(L^{f_T}_\sigma)^2\leq\int_M|W(f_T)|^2\leq\sum_{\sigma\in S_n}\int_{S_\sigma}(L^{f_T}_\sigma)^2+\sum_{p=0}^{n-1}\sum_{\tau\in S_p}\int_{S_\tau}|W(f_T)|^2}$$
and 
\begin{align*}
\sum_{p=0}^{n-1}\sum_{\tau\in S_p}\int_{S_\tau}|W(f_T)|^2&=\sum_{p=0}^{n-1}\sum_{\tau\in S_p}\int_{S_\tau}|\sum_{\sigma\in St_n(\tau)}L_\sigma^{f_T}\phi_\sigma|^2\\
&\leq\sum_{p=0}^{n-1}\sum_{\tau\in S_p}\sum_{\sigma\in St_n(\tau)}C(n)\Theta^{2n}\int_{S_\tau}|L_\sigma^{f_T}|^2=C(n)\Theta^{2n}\sum_{\sigma\in S_n}\int_{\overline{S}_\sigma\setminus S_\sigma}|L_\sigma^{f_T}|^2.
\end{align*}
Hence we have that
$$\Bigl|\int_M|W(f_T)|^2-\sum_\sigma\int_{T_\sigma}(L_\sigma^{f_T})^2\Bigr|\leq C(n)\Theta^{2n}\sum_{\sigma\in S_n}\int_{\overline{S}_\sigma\setminus T_\sigma}|L_\sigma^{f_T}|^2,$$
and since by Theorem \ref{Lipexp}, we have
$$\displaylines{\bigl(1-(\frac{\Lambda m}{\delta})^2\bigr)^n|f_T|_T^2=\bigl(1-(\frac{\Lambda m}{\delta})^2\bigr)^n\sum_\sigma\int_{\Delta_\sigma}(L^\sigma)^2\cr
\leq\sum_\sigma\int_{T_\sigma}(L_\sigma^{f_T})^2\leq\bigl(1+(\frac{\Lambda m}{\delta})^2\bigr)^n\sum_\sigma\int_{\Delta_\sigma}(L^\sigma)^2\leq\bigl(1+(\frac{\Lambda m}{\delta})^2\bigr)^n|f_T|_T^2,}$$
it only remains to bound from above $\int_{\overline{S}_\sigma\setminus S_\sigma}|L_\sigma^{f_T}|^2$. If we set $\zeta=n\Theta^{2n}\alpha^{n+1}\frac{m}{\Diam_M}$, then $\overline{S}_\sigma\setminus T_\sigma\subset(1+\zeta)\Delta_\sigma\setminus(1-\zeta)\Delta_\sigma$ and so
$$\int_{\overline{S}_\sigma\setminus T_\sigma}|L_\sigma^{f_T}|^2\leq\bigl(1+(\frac{\Lambda m}{\delta})^2\bigr)^n\int_{(1+\zeta)\Delta_\sigma\setminus(1-\zeta)\Delta_\sigma}|L^\sigma|^2$$
Let $H_\lambda$ be the dilation in $F$ of factor $\lambda$ and centred at the iso-barycentre of $\Lambda^n$. By the fundamental theorem of the calculus we have
$$\displaylines{\Bigl|\frac{1}{(1-(n+1)\zeta)^n}\int_{(1-\zeta)\Delta_\sigma}(L^\sigma)^2-\int_{\Delta_\sigma}(L^\sigma)^2\Bigr|\leq\Bigl|\int_{\Delta_\sigma}(L^\sigma)^2\bigl(H_{1-(n+1)\zeta}(y)\bigr)-(L^\sigma)^2(y)\Bigr|.}$$
We set  $h=(n+1)\frac{m}{\Diam_M}\zeta$. Since 
$$\dsl{\bigl|(L^\sigma)^2\bigl(H_{1-(n+1)\zeta}(y)\bigr)-(L^\sigma)^2(y)\bigr|\leq2|L^\sigma(y)|\|dL^\sigma\|h\Diam_M+\|dL^\sigma\|^2\Diam_M^2h^2\hfill\cr
\leq C(n)(\frac{m}{\Diam_M})^2\bigl(|L^\sigma(y)|^2+\Diam_M^2\|dL^\sigma\|^2\bigr)}$$
then we have
$$\dsl{\Bigl|\frac{1}{(1-(n+1)\zeta)^n}\int_{(1-\zeta)\Delta_\sigma}(L^\sigma)^2-\int_{\Delta_\sigma}(L^\sigma)^2\Bigr|\leq C(n)(\frac{m}{\Diam_M})^2\int_{\Delta_\sigma}(L^\sigma)^2+\Diam_M^2\|L^\sigma\|^2,}$$
By the same way we get
$$\Bigl|\frac{1}{(1+(n+1)\zeta)^n}\int_{(1+\zeta)\Delta_\sigma}(L^\sigma)^2-\int_{\Delta_\sigma}(L^\sigma)^2\Bigr|\leq C(n)(\frac{m}{\Diam_M})^2\int_{\Delta_\sigma}(L^\sigma)^2+\Diam_M^2\|L^\sigma\|^2$$
Hence $\sum_{\sigma S_n}\int_{(1+\zeta)\Delta_\sigma\setminus(1-\zeta)\Delta_\sigma}|L^\sigma|^2\leq C(n)(\frac{m}{\Diam_M})^2 (|f_T|_T^2+\Diam_M^2q_T(f_T))$,
which combined with the previous inequalities gives the result.
\end{proof}

\subsection{Conclusion}
\label{compspec}

By Propositions \ref{DiscL2}, \ref{Diri1} and \ref{min-max} we can almost bound from below the first eigenvalues of $(M^n,g)$ by the eigenvalues of $q_T$ with respect to $\<\cdot,\cdot\>_T$. To have an error bound that depends on $p$ and not on $\lambda_p$, we use the following rough version of a well known result due to S. Cheng.

\begin{lemma}\label{mLi}
 Let $(M^n,g)$ be a compact Riemannian manifold such that $\Diam_M^2|\sigma|\leq\Lambda^2$. For any $k\in\N$, we have $\Diam_M^2\lambda_k\leq C(n)(\frac{\Diam_M}{\inj})^2 e^\frac{ne^\Lambda}{2}k^2$.
\end{lemma}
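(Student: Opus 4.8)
The plan is to produce $k+1$ test functions with pairwise disjoint supports, all concentrated in a single metric ball that is small enough to be Euclidean up to bounded distortion, and then invoke the min-max characterization of $\lambda_k$. This is of course much cruder than the sharp bound $\Diam_M^2\lambda_k\leq C(n)(\Diam_M^2|\sigma|\text{-quantities})k^{2/n}$ that Cheng's comparison theorem would give, but it is elementary and it suffices here.

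First I would fix a point $p\in M$ and set $\rho=\tfrac12\inf\bigl(\inj,\tfrac{\Diam_M}{2\Lambda}\bigr)$. By Theorem \ref{Lipexp}, $\exp_p$ is a diffeomorphism from $B(0_p,2\rho)\subset T_pM$ onto $B(p,2\rho)$ and satisfies $\bigl(1-\Lambda^2(\tfrac{2\rho}{\Diam_M})^2\bigr)|u-v|\leq d\bigl(\exp_p u,\exp_p v\bigr)\leq\bigl(1+\Lambda^2(\tfrac{2\rho}{\Diam_M})^2\bigr)|u-v|$ for $u,v\in B(0_p,2\rho)$; since $\Lambda^2(\tfrac{2\rho}{\Diam_M})^2\leq\tfrac14$, letting $v\to u$ turns this into the pointwise comparison $\tfrac34\,g_{\rm eucl}\leq\exp_p^*g\leq\tfrac94\,g_{\rm eucl}$ of quadratic forms on $B(0_p,2\rho)$. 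Consequently the Euclidean volume form and the pulled-back Riemannian volume form are comparable up to a factor $C(n)$, and so are the norms of gradients.

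Next I would work in the Euclidean ball $B(0_p,\rho)$: along a fixed diameter one readily places points $c_0,\dots,c_k$ so that the balls $B(c_j,r)$ are pairwise disjoint and contained in $B(0_p,\rho)$, with $r\geq c(n)\rho/(k+1)$. On each $B(c_j,r)$ take the Lipschitz ``tent'' $\phi_j$ equal to $1$ on $B(c_j,r/2)$, equal to $0$ off $B(c_j,r)$, with $|\nabla\phi_j|\leq 2/r$; its Euclidean Rayleigh quotient is at most $C(n)/r^2\leq C(n)(k+1)^2/\rho^2$. Transplanting, the functions $\psi_j=\phi_j\circ\exp_p^{-1}$, extended by $0$ to all of $M$, lie in $H^1(M)$, have pairwise disjoint supports inside $B(p,\rho)$, and by the metric comparison of the previous step their Rayleigh quotients on $(M,g)$ are still bounded by $C(n)(k+1)^2/\rho^2$. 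Because the supports are disjoint, every nonzero element of the $(k+1)$-dimensional span of the $\psi_j$ obeys the same bound on its Rayleigh quotient, so the min-max formula for $\lambda_k$ yields $\lambda_k(M)\leq C(n)(k+1)^2/\rho^2$.

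Finally I would unwind the constants: $\Diam_M^2\lambda_k\leq C(n)(k+1)^2\,\Diam_M^2/\rho^2=C(n)(k+1)^2\max\bigl((\Diam_M/\inj)^2,\Lambda^2\bigr)$. Using $(k+1)^2\leq 4k^2$ for $k\geq1$ (the case $k=0$ being trivial since $\lambda_0=0$), the elementary fact that $\inj\leq\Diam_M$ for a closed manifold (the cut locus of any point is nonempty, compact, and met within distance $\Diam_M$), and the very crude inequality $\Lambda^2\leq e^{ne^\Lambda/2}$ valid for $n\geq2$, one gets $\Diam_M^2\lambda_k\leq C(n)(\Diam_M/\inj)^2e^{ne^\Lambda/2}k^2$, which is the claim. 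There is no genuine obstacle, the estimate being deliberately rough; the only two points deserving a word of care are the passage from the distance estimate of Theorem \ref{Lipexp} to the pointwise metric comparison (obtained by letting the two points coalesce) and the purely Euclidean packing of $k+1$ disjoint balls of radius $\gtrsim_n\rho/k$ along a diameter.
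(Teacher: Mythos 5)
The paper itself gives no proof of this lemma: it is simply cited as a ``rough version'' of Cheng's eigenvalue comparison theorem (\cite{Ch}), so there is nothing to match against line by line. Your argument is nevertheless correct and self-contained, and it is worth noting that it takes a genuinely different route than the reference the paper invokes. Cheng's proof scatters $k+1$ disjoint geodesic balls of radius $\approx\Diam_M/(2k)$ over the \emph{whole} manifold and then uses a Ricci lower bound (Bishop--Gromov/Barta-type arguments) to bound the first Dirichlet eigenvalue of each; this gives $\Diam_M^2\lambda_k\leq C(n,\Lambda)k^2$ with no injectivity-radius dependence. You instead pack all $k+1$ test balls inside a single exponential chart of radius $\rho\sim\min(\inj,\Diam_M/\Lambda)$, where Theorem \ref{Lipexp} makes the metric uniformly Euclidean-comparable, and read off the Rayleigh quotients directly; this avoids any comparison geometry beyond the exponential-map Lipschitz bound, at the price of the factor $(\Diam_M/\inj)^2$, which is exactly what the lemma as stated allows. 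The final absorption of constants is also fine: $\inj\leq\Diam_M$ holds because the cut locus of any point of a closed manifold is nonempty and lies within distance $\Diam_M$, and $\Lambda^2\leq e^{e^\Lambda}\leq e^{ne^\Lambda/2}$ follows from $e^{e^\Lambda}\geq e\cdot e^\Lambda\geq e\,\Lambda^2/2$. (Two cosmetic caveats, neither affecting the proof: take $\rho$ strictly less than $\tfrac12\inf(\inj,\Diam_M/(2\Lambda))$ so the hypothesis $\epsilon<\inj$ of Theorem \ref{Lipexp} is met strictly, and the pointwise metric comparison you derive is really $(1-\delta)^2 g_{\rm eucl}\leq\exp_p^*g\leq(1+\delta)^2 g_{\rm eucl}$ with $\delta=1/4$, i.e.\ $\tfrac{9}{16}\leq\cdot\leq\tfrac{25}{16}$, not $\tfrac34,\tfrac94$; the constant $C(n)$ absorbs this in any case.)
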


We infer the following theorem.

\begin{theorem}\label{MinLi}
 Let $\epsilon\in]0,1[$ be a real number, $(M^n,g)$ be a compact Riemannian manifold such that $\Diam_M^2|\sigma|\leq\Lambda^2$ and $T$ be a geodesic triangulation of $(M^n,g)$ such that $\frac{m_T}{\Diam_M}\leq\epsilon\Bigl(\frac{C(n)\inj}{\Diam_M\Theta_Te^{e^\Lambda}p}\Bigr)^{3n^3}$, then we have
$\lambda_p(q_T)\leq\lambda_p(M^n,g)(1+\epsilon)$.
\end{theorem}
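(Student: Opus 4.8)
The plan is to apply the small eigenvalue principle, Proposition~\ref{min-max}, to the restriction map $\Phi=R:E_p\to\R^N$. Here one takes $E_1=E_p$ with the $L^2$ scalar product $\langle\cdot,\cdot\rangle$ and the quadratic form $q(f)=\int_M|df|^2$; since $E_p$ is $\Delta$-stable of dimension $p+1$, the spectrum of $q$ with respect to $\langle\cdot,\cdot\rangle$ is $\lambda_0(M)\leq\cdots\leq\lambda_p(M)$, while on the target $E_2=\R^N$ one puts $|\cdot|_T^2$ and $q_T$, whose spectrum is $\lambda_0(q_T)\leq\cdots\leq\lambda_{N-1}(q_T)$. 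One first checks, using the crude bounds $\epsilon<1$, $\Theta_T\geq1$, $p\geq1$, $\inj\leq\Diam_M$ and $\Lambda\leq e^{e^\Lambda}$, that the hypothesis on $m_T$ forces $10m_T\leq\inf\bigl(\inj,\Diam_M/(C(n)\Theta_T^n\Lambda)\bigr)$ once the constant $C(n)$ in that hypothesis is chosen small enough, so that Propositions~\ref{DiscL2} and \ref{Diri1} apply (the case $p=0$ being trivial).

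Those two propositions give, for every $f\in E_p$,
\[
\langle R(f),R(f)\rangle_T\geq(1-a)\langle f,f\rangle,\qquad q_T\bigl(R(f)\bigr)\leq(1+b)\,q(f),
\]
with $a=C(n)\Theta_T^{2n}e^{5n^3\Lambda}(1+\Diam_M^2\lambda_p)^{3n^2}(m_T/\Diam_M)^2$ and $b=C(n)\Theta_T^{2n+4}e^{3n^3\Lambda}(1+\Diam_M^2\lambda_p)^{2n^2}(m_T/\Diam_M)$. Proposition~\ref{min-max} with $\alpha=1-a$ and $\beta=1+b$ then yields $\lambda_k(q_T)\leq\frac{1+b}{1-a}\lambda_k(M)$ for every $k\leq p$, in particular for $k=p$. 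Since $\frac{1+b}{1-a}\leq1+\epsilon$ whenever $a,b\leq\epsilon/3$ and $\epsilon\in\,]0,1[$, it remains only to show that the hypothesis on $m_T$ makes both $a$ and $b$ at most $\epsilon/3$.

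To convert the $\lambda_p$ in $a$ and $b$ into a power of $p$, I would feed in Cheng's bound, Lemma~\ref{mLi}: since $\inj\leq\Diam_M$ and $p\geq1$ it gives $1+\Diam_M^2\lambda_p\leq C(n)(\Diam_M/\inj)^2e^{ne^\Lambda/2}p^2$. Substituting this, and using $(m_T/\Diam_M)^2\leq m_T/\Diam_M$, one gets
\[
\max(a,b)\leq C(n)\,\Theta_T^{2n+4}\,e^{5n^3\Lambda+2n^3e^\Lambda}\,(\Diam_M/\inj)^{6n^2}\,p^{6n^2}\,\frac{m_T}{\Diam_M}.
\]
Now insert the hypothesis $\frac{m_T}{\Diam_M}\leq\epsilon\bigl(\frac{C(n)\inj}{\Diam_M\Theta_Te^{e^\Lambda}p}\bigr)^{3n^3}$. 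For $n\geq2$ the exponent $3n^3$ dominates $6n^2$ and $2n+4$, and $\inj/\Diam_M\leq1$, $\Theta_T\geq1$, $p\geq1$, so all the polynomial factors in $\Theta_T$, $\Diam_M/\inj$ and $p$ combine to something $\leq1$; the surviving exponential factor is $e^{5n^3\Lambda+2n^3e^\Lambda-3n^3e^\Lambda}=e^{n^3(5\Lambda-e^\Lambda)}$, which is bounded by a constant depending only on $n$ since $\sup_{\Lambda\geq0}(5\Lambda-e^\Lambda)<\infty$. Hence, choosing $C(n)$ small enough (depending only on $n$), $\max(a,b)\leq\epsilon/3$, which proves the theorem.

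Thus the proof is a direct assembly of Propositions~\ref{min-max}, \ref{DiscL2}, \ref{Diri1} and Lemma~\ref{mLi}, and the only point that requires care is this last estimate: one must see that the double-exponential weight $e^{e^\Lambda}$ in the hypothesis, raised to the large power $3n^3$, is calibrated to swallow simultaneously the single-exponential factor $e^{O(n^3)\Lambda}$ coming from the Moser-iteration Hessian bounds and the iterated-exponential factor $e^{O(n^3)e^\Lambda}$ coming from Cheng's eigenvalue bound, while the polynomial exponent $3n^3$ absorbs the $O(n^2)$ powers of $\Theta_T$, of $\Diam_M/\inj$, and of $p$ produced by these estimates.
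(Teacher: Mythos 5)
Your proposal is correct and follows exactly the route the paper intends (the paper itself leaves the deduction implicit, merely asserting "We infer the following theorem" after stating Lemma~\ref{mLi}): apply Proposition~\ref{min-max} to the restriction map $R:E_p\to\R^N$ with the two comparison estimates of Propositions~\ref{DiscL2} and \ref{Diri1}, then use Cheng's bound (Lemma~\ref{mLi}) to replace $1+\Diam_M^2\lambda_p$ by an explicit function of $p$, $\Diam_M/\inj$ and $\Lambda$. Your bookkeeping of the exponents — in particular checking that $3n^3$ dominates $6n^2$ and $2n+4$, and that $e^{3n^3e^\Lambda}$ from the hypothesis absorbs both $e^{O(n^3)\Lambda}$ from the Moser iteration and $e^{O(n^3)e^\Lambda}$ from Cheng's bound — is exactly the calibration the paper's choice of constants was designed to achieve.
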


Once again, by Propositions \ref{EstimW} and \ref{min-max} we can bound from above the eigenvalues of $(M^n,g)$ by the eigenvalues of $q_T$ with respect to $\<\cdot,\cdot\>_T$. Note that to bound $\lambda_p(q_T)$, we just have to use Theorem \ref{MinLi}.

\begin{theorem}\label{MaxLi}
   Let $\epsilon\in]0,1[$ be a real number, $(M^n,g)$ be a compact Riemannian manifold such that $\Diam_M^2|\sigma|\leq\Lambda^2$ and $T$ be a geodesic triangulation of $(M^n,g)$ such that $\frac{m_T}{\Diam_M}\leq\epsilon\Bigl(\frac{C(n)\inj}{\Diam_M\Theta_Te^{e^\Lambda}p}\Bigr)^{3n^3}$. Then, we have that
$$\lambda_p(M^n,g)\leq\lambda_p(q_T)(1+\epsilon).$$
\end{theorem}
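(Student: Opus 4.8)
The plan is to run the small eigenvalue principle (Proposition \ref{min-max}) with the Whitney map $W$ of Proposition \ref{EstimW} in the role of $\Phi$, but restricted to the span of the bottom eigenvectors of $q_T$, so that the Dirichlet term appearing in the $L^2$-estimate of Proposition \ref{EstimW}(2) can be controlled. Concretely, let $F_p\subset\R^N$ denote the span of the eigenvectors of $q_T$ (with respect to $|\cdot|^2_T$) associated to $\lambda_0(q_T),\dots,\lambda_p(q_T)$, so that $q_T(y)\leq\lambda_p(q_T)|y|^2_T$ for every $y\in F_p$ and $\lambda_k(q_T|_{F_p})=\lambda_k(q_T)$ for $0\leq k\leq p$. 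Since $R\circ W=\mathrm{Id}_{\R^N}$, the map $W$ is injective, hence $W(F_p)$ is a $(p+1)$-dimensional subspace of $H^{1,2}(M)$.

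First I would feed the bound $\delta_M^2q_T(y)\leq\delta_M^2\lambda_p(q_T)|y|^2_T$ into Proposition \ref{EstimW}(2)--(3) to obtain, for all $y\in F_p$,
$$\int_MW(y)^2\geq\Bigl(1-C(n)(1+\Lambda)\Theta_T^{2n}\bigl(\tfrac{m_T}{\delta_M}\bigr)^2\bigl(1+\delta_M^2\lambda_p(q_T)\bigr)\Bigr)|y|^2_T=:\alpha\,|y|^2_T,$$
$$\int_M|dW(y)|^2\leq\bigl(1+C(n)\Theta_T^{20n^2}\tfrac{m_T}{\delta_M}\bigr)\,q_T(y)=:\beta\,q_T(y).$$
Applying Proposition \ref{min-max} to $\Phi=W|_{F_p}$, from $\bigl(F_p,|\cdot|^2_T,q_T\bigr)$ to $\bigl(W(F_p),\|\cdot\|_2^2,\int_M|d\cdot|^2\bigr)$, yields $\lambda_k\bigl(\int_M|d\cdot|^2\big|_{W(F_p)}\bigr)\leq\tfrac{\beta}{\alpha}\lambda_k(q_T)$ for $0\leq k\leq p$. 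On the other hand, the Courant--Fischer characterization of $\lambda_p(M)$ over all $(p+1)$-dimensional subspaces of $H^{1,2}(M)$ gives $\lambda_p(M)\leq\lambda_p\bigl(\int_M|d\cdot|^2\big|_{W(F_p)}\bigr)$, since $W(F_p)$ is such a subspace. Combining the two inequalities gives $\lambda_p(M)\leq\tfrac{\beta}{\alpha}\lambda_p(q_T)$.

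It then remains to check that $\tfrac{\beta}{\alpha}\leq1+\epsilon$ under the stated mesh hypothesis. To make $\alpha$ explicit I would bootstrap with the already-proved lower bound: Theorem \ref{MinLi} gives $\lambda_p(q_T)\leq(1+\epsilon)\lambda_p(M)\leq2\lambda_p(M)$, and the rough Cheng estimate of Lemma \ref{mLi} gives $\delta_M^2\lambda_p(M)\leq C(n)\bigl(\tfrac{\delta_M}{\inj}\bigr)^2e^{ne^\Lambda/2}p^2$, so that $1+\delta_M^2\lambda_p(q_T)\leq C(n)\bigl(\tfrac{\delta_M}{\inj}\bigr)^2e^{ne^\Lambda/2}p^2$. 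Substituting this, both the defect $1-\alpha$ and the excess $\beta-1$ are bounded by a universal power of $\Theta_Te^{e^\Lambda}p\,\tfrac{\delta_M}{\inj}$ times $\tfrac{m_T}{\delta_M}$ (for $1-\alpha$, times $\bigl(\tfrac{m_T}{\delta_M}\bigr)^2$); choosing the constant $C(n)$ in the hypothesis $\tfrac{m_T}{\delta_M}\leq\epsilon\bigl(\tfrac{C(n)\inj}{\delta_M\Theta_Te^{e^\Lambda}p}\bigr)^{3n^3}$ large enough (and so that this mesh bound is stronger than the smallness condition required by Proposition \ref{EstimW}, which is immediate) forces $\beta-1\leq\epsilon/4$ and $1-\alpha\leq\epsilon/4$. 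Then $\alpha>0$ and $\tfrac{\beta}{\alpha}\leq\tfrac{1+\epsilon/4}{1-\epsilon/4}\leq1+\epsilon$ for $\epsilon\in\,]0,1[$, which finishes the proof.

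The main obstacle is precisely the $\delta_M^2q_T(f_T)$ term in Proposition \ref{EstimW}(2): it is harmless only because $W$ is restricted to $F_p$ and $\lambda_p(q_T)$ is controlled through Theorem \ref{MinLi} and Lemma \ref{mLi}. Without this restriction-plus-bootstrap, the $L^2$-comparison of $W$ degrades by a factor involving the largest eigenvalue $\lambda_{N-1}(q_T)$, which is uncontrolled as the mesh shrinks; everything else is bookkeeping of constants.
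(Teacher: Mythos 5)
Your proof proposal is correct and follows essentially the same route as the paper: apply Proposition~\ref{min-max} with $E_1=F_p$ and $\Phi=W$, control the extra $\Diam_M^2q_T$-term in Proposition~\ref{EstimW}(2) by using $q_T\leq\lambda_p(q_T)|\cdot|_T^2$ on $F_p$, and then bootstrap the size of $\lambda_p(q_T)$ via Theorem~\ref{MinLi} combined with Lemma~\ref{mLi}. The only presentational difference is that you phrase the comparison as min-max on the finite-dimensional image $W(F_p)$ followed by a Courant--Fischer step, whereas the paper invokes Proposition~\ref{min-max} in one shot with the infinite-dimensional target; these are the same argument.
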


\begin{proof}
  When applying Proposition \ref{min-max} with $E_1$ the space spanned by the $p+1$ first eigenfunction of $q_T$, Proposition \ref{EstimW} gives $\alpha\geq 1-C(n)e^\Lambda\Theta^{2n}(\frac{m_T}{\Diam_M})^2\bigl(1+\Diam_M^2\lambda_p(q_T)\bigr)$ and $\beta\leq1+C(n)\Theta^{20n^2}\frac{m}{\Diam_M}$. Now, by Theorem \ref{MinLi} and Lemma \ref{mLi}, we have $\Diam_M^2\lambda_p(q_T)\leq\frac{C(n)\Diam_M^2e^\frac{ne^\Lambda}{2}p^2}{\inj^2}$.
\end{proof}

\section{Proof of Theorem \ref{approvecp}}
\label{compeigenf}

\subsection{Approximation of the eigenfunctions}\label{prelalg}
\bigskip

To get the relations between the eigenfunctions of $(M^n,g)$ and the discrete eigenfunctions, we first prove the following result, where the notations are the same as in the introduction.

\begin{lemma}\label{eigenf}
Let $\delta>0$ and assume that $\lambda_p(M)+\eta\leqslant\lambda_{p+1}(M)$. 

For any $f\in E_p$, we have
$\|R(f)-P_p\circ R(f)\|_T^2\leqslant \frac{C(p,n,\Lambda,\frac{\Diam_M}{i_0})}{\sqrt{\eta}}(\frac{m_T}{\Diam_M})^\frac{1}{6n^2}\|R(f)\|_T^2$
where $P_p$ is the orthogonal projection from $\R^N$ to the space $F_p$ spanned by the first $p$ eigenfunctions of $q_T$.

For any $(y_i)\in F_p$, we have
$\|W(y_i)-Q_p\circ W(y_i)\|_T^2\leqslant C(p,n,\Lambda,\frac{\Diam_M}{i_0},\eta)(\frac{m_T}{\Diam_M})^\frac{1}{6n^2}\|W(y_i)\|_T^2$,
where $Q_p$ is the orthogonal projection from $L^2(M)$ to $E_p$.
\end{lemma}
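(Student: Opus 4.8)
The plan is to reduce the lemma to a short finite-dimensional variational comparison, fed by bilinear (polarized) versions of Propositions \ref{DiscL2}, \ref{Diri1}, \ref{EstimW} and by the two-sided eigenvalue bounds of Theorem \ref{approvalp}. First I would record these polarizations: the proofs of those propositions are local on the thickened faces $T_\sigma$, $S_\sigma$ and are bilinear, so for $f,g\in E_p$ one gets $|\langle R(f),R(g)\rangle_T-\int_M fg|\le\epsilon_0(\|f\|_2^2+\|g\|_2^2)$ and $|q_T(R(f),R(g))-\int_M\langle df,dg\rangle|\le\epsilon_1(\|df\|_2^2+\|dg\|_2^2)$, where $\epsilon_0,\epsilon_1$ are the error factors of those statements (a power of $m_T/\Diam_M$ times powers of $\Theta_T$, of $e^{n^3\Lambda}$ and of $1+\Diam_M^2\lambda_p$), with an analogous pair of estimates for $W$ coming from Proposition \ref{EstimW}. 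Applying these to an $L^2$-orthonormal eigenbasis $f_0,\dots,f_p$ of $E_p$ and using $\int_M\langle df_i,df_j\rangle=\lambda_i(M)\delta_{ij}$, the Gram matrix $M=(\langle R(f_i),R(f_j)\rangle_T)_{i,j}$ is $O(\epsilon_0)$-close to the identity and the matrix $Q=(q_T(R(f_i),R(f_j)))_{i,j}$ is $O(\epsilon_1(1+\Diam_M^2\lambda_p))$-close to $\mathrm{diag}(\lambda_0(M),\dots,\lambda_p(M))$; hence $G_p:=R(E_p)$ has dimension $p+1$ and its $q_T$-trace relative to $\langle\cdot,\cdot\rangle_T$, namely $\mathrm{Tr}(M^{-1}Q)$, is at most $\sum_{i\le p}\lambda_i(M)+C(p,n,\Lambda,\Diam_M/i_0)\,\kappa$, where $\kappa$ is a fixed power of $m_T/\Diam_M$ once the $\Theta_T$, $e^{e^\Lambda}$ and $\lambda_p$ factors are absorbed by lowering the exponent (using Lemma \ref{mLi} to bound $\Diam_M^2\lambda_p$ by a function of $p$). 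The same computation with $W$ in place of $R$ shows that $W(F_p)$, with $F_p$ as in the statement (a subspace of dimension $p+1$), is a $(p+1)$-dimensional subspace of $H^{1,2}(M)$ whose Dirichlet trace is at most $\sum_{i\le p}\lambda_i(T)+C\,\kappa$.

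The variational core is the elementary inequality: if $q$ is a symmetric form on a Euclidean space with eigenvalues $\mu_0\le\mu_1\le\cdots$ and $\langle\cdot,\cdot\rangle$-orthonormal eigenvectors $e_i$, if $\Pi$ is the orthogonal projection onto $\mathrm{span}(e_0,\dots,e_p)$, and if $V$ is any $(p+1)$-dimensional subspace with orthogonal projection $P_V$, then $\|(I-\Pi)P_V\|_{\mathrm{HS}}^2\le(\mathrm{Tr}(q|_V)-\sum_{i\le p}\mu_i)/(\mu_{p+1}-\mu_p)$ whenever $\mu_{p+1}>\mu_p$; this follows by writing $\mathrm{Tr}(q|_V)=\sum_i\mu_i\|P_Ve_i\|^2$, using $\sum_i\|P_Ve_i\|^2=p+1$ to trade the low block of indices against the high one, and noting (Ky Fan) that the numerator is nonnegative. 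I would apply this twice. Discrete side: $q=q_T$, $\Pi=P_p$, $V=G_p$; the numerator is $\le\mathrm{Tr}(M^{-1}Q)-\sum_{i\le p}\lambda_i(T)\le C(p,n,\Lambda,\Diam_M/i_0)(\kappa+\epsilon\lambda_p(M))$ by Step 1 and $\lambda_i(T)\ge(1-\epsilon)\lambda_i(M)$, while the gap is $\lambda_{p+1}(T)-\lambda_p(T)\ge(1-\epsilon)\lambda_{p+1}(M)-(1+\epsilon)\lambda_p(M)\ge\eta/2$ as soon as $\epsilon$ is small compared with $\eta/(1+\Diam_M^2\lambda_p(M))$ (again controlled in terms of $p$ through Lemma \ref{mLi}). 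Since $\|(I-P_p)R(f)\|_T\le\|(I-P_p)P_{G_p}\|_{\mathrm{HS}}\|R(f)\|_T$ for $f\in E_p$, this yields the first inequality. Continuous side: $q=\int_M|d\cdot|^2$ on $L^2(M)$, $\Pi=Q_p$, $V=W(F_p)$; here the gap is the genuine $\lambda_{p+1}(M)-\lambda_p(M)\ge\eta$ and the numerator is $\le\sum_{i\le p}\lambda_i(T)+C\kappa-\sum_{i\le p}\lambda_i(M)\le C(\kappa+\epsilon\lambda_p(M))$ using $\lambda_i(T)\le(1+\epsilon)\lambda_i(M)$, so $\|(I-Q_p)W(y)\|\le\|(I-Q_p)P_{W(F_p)}\|_{\mathrm{HS}}\|W(y)\|$ gives the second inequality. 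Finally one takes $\epsilon$ as large as the mesh hypothesis $m_T/\Diam_M\le C(n)(\inj/\Diam_M\Theta_T e^{e^\Lambda}p)^{3n^3}\epsilon$ of Theorem \ref{approvalp} permits, i.e. a fixed power of $m_T/\Diam_M$; the displayed exponent $\frac{1}{6n^2}$ and the factor $\eta^{-1/2}$ then come out of routine juggling of the exponents ($3n^3$, $2n^2$, $\dots$) appearing in Theorem \ref{approvalp}, Propositions \ref{Diri1}, \ref{EstimW} and Lemma \ref{Disceigenf}.

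I expect the main difficulty to lie not in the variational core, which is a few lines of linear algebra, but in the bookkeeping of the first step together with the self-consistent choice of $\epsilon$: one must pass from the bilinear comparison errors — which carry large powers of $\Theta_T$, factors $e^{e^\Lambda}$ and a power of $\lambda_p(M)$ — to one clean power of $m_T/\Diam_M$, and simultaneously keep $\epsilon$ small enough to open a gap $\ge\eta/2$ in the spectrum of $q_T$ (this is exactly where Cheng's estimate, Lemma \ref{mLi}, enters, to control $\lambda_p(M)$ in terms of $p$ alone) yet large enough to satisfy the mesh condition of Theorem \ref{approvalp}. One also has to check that Ky Fan's inequality is legitimate on the $L^2(M)$ side — the subspaces involved lie in the form domain $H^{1,2}(M)$, so this is fine — and that the Gram--Schmidt orthonormalization implicit in interpreting $\mathrm{Tr}(q|_V)$ for $V=G_p$ or $V=W(F_p)$ perturbs the traces only by the errors already controlled.
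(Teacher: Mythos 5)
Your proof is correct, but it takes a genuinely different route from the paper's. The paper follows Colin de Verdi\`ere's technique literally: it passes to the exterior power $\Lambda^{p+1}\R^N$, introduces the operator $A(v_0\wedge\cdots\wedge v_p)=\sum_i v_0\wedge\cdots\wedge\Delta_T(v_i)\wedge\cdots\wedge v_p$, computes the Rayleigh quotient of the test vector $\overline{R}(f_0\wedge\cdots\wedge f_p)=R(f_0)\wedge\cdots\wedge R(f_p)$ using the bilinear versions of Propositions \ref{DiscL2} and \ref{Diri1} plus Theorem \ref{approvalp}, and then exploits the fact that $A$ has spectral gap $\lambda_{p+1}(T)-\lambda_p(T)$ above its ground state $y_0\wedge\cdots\wedge y_p$ to conclude that the test vector is close to $y_0\wedge\cdots\wedge y_p$. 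You avoid exterior algebra altogether: you bound $\mathrm{Tr}\bigl(q_T|_{R(E_p)}\bigr)-\sum_{i\le p}\lambda_i(T)$ via the same polarized estimates, and then use the elementary trace inequality $\|(I-\Pi)P_V\|^2_{\mathrm{HS}}\le\bigl(\mathrm{Tr}(q|_V)-\sum_{i\le p}\mu_i\bigr)/(\mu_{p+1}-\mu_p)$. The two arguments are morally equivalent --- both hinge on a trace/Rayleigh-quotient excess of order a power of $m_T/\Diam_M$ and a spectral gap at position $p$ --- but yours is more self-contained and even slightly sharper in $\eta$: you get a bound of order $\kappa/\eta$ on the squared HS norm, which implies the paper's $\kappa^{1/2}/\sqrt{\eta}$ in the nontrivial regime $\kappa\le\eta$ (and the lemma is vacuous otherwise, since the left side never exceeds $\|R(f)\|_T^2$). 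You correctly identify the two delicate points --- the self-consistent choice of $\epsilon$ in Theorem \ref{approvalp} small relative to $\eta/(1+\Diam_M^2\lambda_p)$ so that the discrete gap opens up, with Lemma \ref{mLi} absorbing the $\lambda_p$ dependence into a function of $p$, and the bookkeeping that turns $\Theta_T$, $e^{e^\Lambda}$ and $\lambda_p$ factors into a clean power of $m_T/\Diam_M$ --- and these are handled the same way in the paper. Both arguments also need the implicit nondegeneracy that $R(E_p)$ (resp.\ $W(F_p)$) has full dimension $p+1$, which you obtain from the Gram matrix being close to the identity; the paper obtains the same via $|\overline R(f_0\wedge\cdots\wedge f_p)|_T^2=(\det G)^2\approx1$.
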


\begin{proof}
We use the same idea as in \cite{CV}. We consider in $\Lambda^{p+1}\R^N$ the operator $A(v_0\wedge\cdots\wedge v_p)=\sum_{i=0}^pv_0\wedge\cdots\wedge\Delta_T(v_i)\wedge\cdots\wedge v_p$, where $\Delta_T$ is the symmetric operator such that $q_T(x)=\langle \Delta_T(x),x\rangle_T$. The eigenvalues of $A$ are exactly the sum $\lambda_{i_1}(T)+\cdots+\lambda_{i_{p+1}}(T)$ with $0\leqslant i_1<\cdots<i_{p+1}\leqslant N$, and so its first eigenvalue is $\lambda_{0}(T)+\cdots+\lambda_{p}(T)$. We set $\overline{R}:\Lambda^{p+1} E_p\to\Lambda^{p+1} \R^N$ defined by $\overline{R}(f_0\wedge\cdots\wedge f_p)=R(f_0)\wedge\cdots\wedge R(f_p)$. Since $|\overline{R}(f_0\wedge\cdots\wedge f_p)|_T^2=(\det G)^2$, where $G$ is the Gramm matrix of the family $R(f_0),\cdots, R(f_p)$, and since there exists $C_p$ such that $|(\det G)^2-1|\leqslant C_p|G-I_{p+1}|$ for $G$ near $I_{p+1}$, Proposition \ref{DiscL2} and Lemma \ref{mLi} give us
\begin{align*}
\bigl||\overline{R}(f_0\wedge\cdots\wedge f_p)|_T^2-1\bigr|\leqslant C(p,n,\Theta,\Lambda,\frac{\Diam_M}{\inj})(\frac{m_T}{\Diam_M})^2
\end{align*}

By Propositions \ref{DiscL2} and \ref{Diri1} and by Theorem \ref{approvalp}, we get
\begin{align*}
&\bigl|\bigl\langle A\overline{R}(f_0\wedge\cdots\wedge f_p),\overline{R}(f_0\wedge\cdots\wedge f_p)\bigr\rangle-\sum_{i=0}^p\lambda_i(T)|\overline{R}(f_0\wedge\cdots\wedge f_p)\bigr|^2\bigr|\\
&=\bigl|\sum_{i=0}^p\bigl\langle R(f_0)\wedge\cdots\wedge(A(R(f_i))-\lambda_i(T)R(f_i))\wedge\cdots\wedge R(f_p),R(f_0)\wedge\cdots\wedge R(f_p)\bigr\rangle\bigr|\\
&=\bigl|\sum_{i,j=0}^p\bigl\langle R(f_0)\wedge\cdots\wedge\langle A(R(f_i))-\lambda_i(T)R(f_i)),R(f_j)\rangle G^{ij}R(f_j)\wedge\cdots\wedge R(f_p),R(f_0)\wedge\cdots\wedge R(f_p)\bigr\rangle\bigr|\\
&=\bigl|\sum_{i=0}^pG^{ii}(\det G)^2(q_T(R(f_i))-\lambda_i(T)|R(f_i)|_T^2)\bigr|\\
&\leqslant\bigl|\sum_{i=0}^pG^{ii}(\det G)^2(q_T(R(f_i))-\|df_i\|^2)\bigr|+\bigl|\sum_{i=0}^pG^{ii}(\det G)^2(\lambda_i(M)-\lambda_i(T))\bigr|\\
&+\bigl|\sum_{i=0}^pG^{ii}(\det G)^2\lambda_i(T)(1-|R(f_i)|_T^2)\bigr|\leqslant C(p,n,\Theta,\Lambda,\frac{\Diam_M}{\inj})(\frac{m_T}{\Diam_M})^\frac{1}{3n^2}
\end{align*}
Let $(y_i)$ an orthonormal family of eigenfunctions of $q_T$ associated to the eigenvalue $(\lambda_i(T))$. We set $\overline{R}(f_0\wedge\cdots\wedge f_p)=\alpha y_0\wedge\cdots\wedge y_p+n$ with $n$ orthogonal to $y_0\wedge\cdots\wedge y_p$. The above estimates give us
$|\alpha^2+|n|^2-1|\leqslant C(p,n,\Theta,\Lambda,\frac{\Diam_M}{\inj})(\frac{m_T}{\Diam_M})^2$ and $\delta|n|^2\leqslant C(p,n,\Theta,\Lambda,\frac{\Diam_M}{\inj})(\frac{m_T}{\Diam_M})^\frac{1}{3n^2}$, from which we easily get
$$\sum_{i=0}^p\|R(f_i)-P_p(R(f_i))\|_T^2\leqslant|\overline{R}(f_0\wedge\cdots\wedge f_p)-y_0\wedge\cdots\wedge y_p|^2\leqslant \frac{C(p,n,\Theta,\Lambda,\frac{\Diam_M}{\inj})(\frac{m_T}{\Diam_M})^\frac{1}{6n^2}}{\sqrt{\eta}}$$

The proof of the other estimate is exactly the same, but we first have to bound from below the gap $\lambda_{p+1}(T)-\lambda_p(T)$ using the bound on the gap $\lambda_{p+1}(M)-\lambda_p(M)$ and Theorem \ref{approvalp}. 
\end{proof}

We easily infer Theorem \ref{approvecp} from the previous Lemma. Indeed, by Proposition \ref{DiscL2}, the quadratic form $|P_p\circ R|^2$ on $E_q$ takes only values less than $1+C(\frac{m_T}{\Diam_M})^\frac{1}{6n^2}$ on the unit sphere of $E_q$ and so its trace with respect to $\langle\cdot,\cdot\rangle_T$ is less than $p+C(\frac{m_T}{\Diam_M})^\frac{1}{6n^2}$ (complete an orthonormal basis of the kernel of $P_p\circ R$). But the previous lemma, applied for the spectral gap at $p$, implies that the trace of $|P_p\circ R|^2$ on $E_p$ is close to $p$ and so $p-C(\frac{m_T}{\Diam_M})^\frac{1}{6n^2}+\sum_{p+1\leqslant i\leqslant q}|P_p\circ R(f_i)|^2={\rm tr} |P_p\circ R|^2\leqslant p+C(\frac{m_T}{\Diam_M})^\frac{1}{6n^2}$, and so $\sum_{p+1\leqslant i\leqslant q}|P_p\circ R(f_i)|^2\leqslant C(\frac{m_T}{\Diam_M})^\frac{1}{6n^2}$. This gives the result when combined with the previous lemma applied to the spectral gap at $q$.


\begin{thebibliography}{aa}

\bibitem{Au5} {\sc E.~Aubry}, {\em Vari\'{e}t\'es de courbure de Ricci presque minor\'ee, in\'egalit\'es g\'eom\'etriques optimales et stabilit\'e des m\'etriques extr\'emales.}, Th\`ese. Universit\'e Joseph fourier, Grenoble (2003).

\bibitem{Au2} {\sc E.~Aubry}, {\em Pincement sur le spectre et le volume en courbure de Ricci positive}, Ann. Sci. \'{E}cole Norm. Sup. (4) {\bf 38} (2005), n°3, 387--405. 

\bibitem{Au} {\sc E. Aubry}, {\em Diameter pinching in almost positive Ricci curvature}, Comm. Math. Helv. {\bf 84} (2009), n°2, 223--233.

\bibitem{BIK} {\sc D.Burago, S.Ivanov, Y.Kurylev}, {A graph discretization of the Laplace-Beltrami operator}, Preprint (2013) arXiv:1301.2222.

\bibitem{BuK} {\sc P. Buser, H. Karcher}, {\em Gromov's almost flat manifolds}, Ast\'erisque {\bf 81} (1981), SMF \'ed.

\bibitem{CMS} {\sc J. Cheeger, W. M\"uller, R. Schrader}, {\em On the curvature of piecewise flat space},
Comm. Math. Phys. {\bf 92} (1984), p. 405--454.

\bibitem{Ch} {\sc S. Y. Cheng}, {\em Eigenvalue comparison theorems and its geometric applications},
Math. Z. {\bf 143} (1975), p. 289--297.

\bibitem{CV} {\sc Y. Colin de Verdi\`{e}re}, {\em Sur la multiplicit\'e de la premi\`ere valeur propre non nulle du Laplacien}, Comment. Math. Helvetici (1986), p. 254--270.

\bibitem{Do-Pa} {\sc J.~Dodziuk, V.~Patodi},  {\em Riemannien structures and triangulations of manifolds}, J. Ind. Math. Soc. {\bf 40} (1976), p. 1-52.

\end{thebibliography}
\end{document}